\DeclareSymbolFont{SY}{U}{psy}{m}{n}
\DeclareMathSymbol{\emptyset}{\mathord}{SY}{'306}
\theoremstyle{plain}
\newtheorem{thm}{Theorem}[section]
\newtheorem{corollary}[thm]{Corollary}
\newtheorem{lemma}[thm]{Lemma}
\newtheorem{definition}[thm]{Definition}
\theoremstyle{definition}
\numberwithin{equation}{section}
\begin{document}
\title[$K_{0}$-groups and strongly irreducible decompositions of operator tuples]{$K_{0}$-groups and strongly irreducible decompositions of operator tuples}

\author{Jing Xu}
\email{xujingmath@outlook.com}
\address{School of Mathematical Sciences, Hebei Normal University, Shijiazhuang, Hebei, 050016, China}

\thanks{This work was supported by the National Natural Science Foundation of China, Grant No. 11922108 and 12001159.}

\subjclass[2000]{Primary 47A45; Secondary 46E22, 46J20}

\keywords{The Cowen-Douglas tuples, Similarity, Strongly irreducible, $K_{0}$-group}

\begin{abstract}
An operator tuple $\mathbf{T}=(T_{1},\ldots,T_{n})$ is called strongly irreducible (SI), if the joint commutant of $\mathbf{T}$ does not any nontrivial idempotent operator.
In this paper, we study the uniqueness of finitely strong irreducible decomposition of operator tuples up to similarity by  $K$-theory of operator algebra,  and give the algebraically similarity invariants of the Cowen-Douglas tuple with index 1 by using $K_{0}$-group of the commutant of operator tuples. As an application, we calculate $K_{0}$-groups of some multiplier algebras, and describe the similarity of backwards multishifts on Drury-Arveson space by means of inflation theory.
\end{abstract}

\maketitle

\section{Introduction}
For a complex separable Hilbert space $\mathcal{H}$, let $\mathcal{L}(\mathcal{H})$ denote the algebra of bounded linear operators on $\mathcal{H}$.
Given a complex separable Hilbert space $\mathcal{H}$, let ${\mathcal L}({\mathcal H})$ denote the algebra of bounded linear operators on $\mathcal H$.
Recall that operators $T$ and $S$ are said to be unitary equivalent if there is a unitary operator $U$ such that $T=U^{*}SU$ (denoted by $T\sim_{u}S$). The operators $T$ is similar to $S$ if there is a bounded invertible operator $X$ satisfying $T=X^{-1}SX$ (denoted by $T\sim_{s}S$).

An operator $T\in\mathcal{L}(\mathcal{H})$ is called \emph{strongly irreducible} if $T$ does not commute with any nontrivial idempotent operator. If idempotent operator is replaced by self adjoint idempotent, then $T$ is said to be \emph{irreducible}, see \cite{Gilfeather,Halmos1968,Jiang1979}. The concepts of strong irreducibility and of Banach irreducibility introduced by Gilfeather and Jiang in \cite{Gilfeather} and \cite{JS2006}, respectively, turned out to be equivalent. Strong irreducibility is preserved by similarity, and irreducibility is preserved by unitary equivalent.

The Jordan canonical form theorem states that every operator on a finite-dimensional Hilbert space $\mathcal{H}$ can be uniquely written as a direct sum of strongly irreducible operators up to similarity. Is there a corresponding analogue when one considers operators on an infinite-dimensional complex separable Hilbert space $\mathcal{H}$?
The notion of a unicellular operator was introduced by Brodski\v{1} in \cite{MSB1956,MSB1968} and Kisilevs'ki\v{1} was shown in \cite{GEK19672, GEK1967} that dissipative operators can be written as a direct sum of unicellular operators. In \cite{Halmos1968},  Halmos proved that the set of irreducible operators was dense in $\mathcal{L}(\mathcal{H})$ in the sense of Hilbert-Schmidt norm approximations. In \cite{BNF1979, BNF1975, NF19702, NF19703, NF1972, NF}, Bercovici, Foia\c{s}, K\'{e}rchy and Sz.-Nagy showed that every operator of class $C_{0}$ on a complex separable Hilbert space was proven to be similar to a Jordan operator. In \cite{DH1990}, Herrero show that every bitriangular operator was to be quasisimilar to a Jordan operator.

For $m \in \mathbb{N}_{+}$, let $\mathbf{T}=(T_{1},\ldots,T_{m}) \in \mathcal{L}(\mathcal{H})^m$ and $\mathbf{S}=(S_{1},\ldots,S_{m})\in \mathcal{L}(\mathcal{H})^m$ be $m$-tuples of commuting operators on $\mathcal{H}$. If there is a unitary operator $U\in \mathcal{L}(\mathcal{H})$ such that $U\mathbf{T}=\mathbf{S}U$, i.e., $UT_{i}=S_{i}U$ for all $i, 1\leq i\leq m$, then $\mathbf{T}$ and $\mathbf{S}$ are said to be \emph{unitarily equivalent} (denoted by $\mathbf{T}\sim_{u}\mathbf{S}$). If there is an invertible operator $X\in \mathcal{L}(\mathcal{H})$ such that $X\mathbf{T}=\mathbf{S}X$, i.e., $XT_{i}=S_{i}X$ for all $i, 1\leq i\leq m$, then $\mathbf{T}$ and $\mathbf{S}$ are \emph{similar} (denoted by $\mathbf{T}\sim_{s}\mathbf{S}$).

To study equivalence problems for bounded linear operators on infinite-dimensional complex separable Hilbert space to which standard methods do not apply, Cowen and Douglas introduced in the late 1970's, a class of operators with a holomorphic eigenvector bundle structure \cite{CD,CD2}. For $\mathbf{T}=(T_1, \ldots, T_m) \in \mathcal{L}(\mathcal{H})^m$ be $m$-tuples of commuting operators on $\mathcal{H}$. The tuple of commuting opartor $\mathbf{T}: \mathcal{H}\longrightarrow\mathcal{H}\oplus\cdots\oplus\mathcal{H}$ is defined by
$$\mathbf{T}(x)=(T_{1}x,\ldots,T_{m}x),\quad x\in \mathcal{H}.$$
For $w=(w_{1},\ldots,w_{m}) \in \Omega$, let $\mathbf{T}-w=(T_{1}-w_{1},\ldots,T_{m}-w_{m})$, $\ker(\mathbf{T}-w)=\bigcap \limits_{i=1}^{m}\ker(T_{i}-w_{i}),$
and $\mathcal{A}^{\prime}(T_{i})=\{X\in\mathcal{L}(\mathcal{H}):XT_{i}=T_{i}X\}$ and $\mathcal{A}^{\prime}(\mathbf{T})=\bigcap\limits_{i=1}^{m}\mathcal{A}^{\prime}(T_{i})$ be commutants of $T_{i}$ and $\mathbf{T}$, respectively.

\begin{definition}\cite{CD,CD2}
For $\Omega$ a connected open subset of $\mathbb{C}^{m}$ and $n$ a positive integer, let $\mathbf{\mathcal{B}}_{n}^{m}(\Omega)$ denotes the Cowen-Douglas class of $m$-tuples $\mathbf{T}=(T_{1},\ldots,T_{m})\in\mathcal{L}(\mathcal{H})^{m}$ satisfying:
\begin{itemize}
  \item [(1)]$\text{ran\,}(\mathbf{T}-w)$ is closed for all $w$ in $\Omega$;
  \item [(2)]$\bigvee \limits_{w{\in}{\Omega}} \ker(\mathbf{T}-w)=\mathcal H$; and
  \item [(3)]$\dim \ker (\mathbf{T}-w)=n$ for $w$ in $\Omega$,
\end{itemize}
where $\bigvee$ denotes the closed linear span.
\end{definition}
For an $m$-tuple of commuting operators $\mathbf{T}=(T_{1},\ldots,T_{m}) \in \mathbf{\mathcal{B}}_{n}^{m}(\Omega),$ Cowen and Douglas proved in \cite{CD, CD2} that an associated holomorphic eigenvector bundle $\mathcal{E}_{\mathbf{T}}$ over $\Omega$ of rank $n$ exists, where
$$\mathcal{E}_\mathbf{T}=\{(w, x)\in \Omega\times \mathcal
H: x \in \ker(\mathbf{T}-w)\},\quad \pi(w,x)=w.$$
Furthermore, it was shown that two operator tuples $\mathbf{T}$ and $\widetilde{\mathbf{T}}$ in $\mathbf{\mathcal{B}}_{n}^{m}(\Omega)$ are unitarily equivalent if and only if the vector bundles $\mathcal{E}_\mathbf{T}$ and $\mathcal{E}_{\widetilde{\mathbf{T}}}$ are equivalent as Hermitian holomorphic vector bundles. They also showed that every $m$-tuple
$\mathbf{T}\in\mathcal{B}_{n}^{m}(\Omega)$ can be realized as the adjoint of an $m$-tuple of multiplication operators by the coordinate functions on a Hilbert space of holomorphic functions on $\Omega^{*}=\{\overline{w}: w \in \Omega\}$, which can also be seen Zhu's \cite{Zhu2000} and Eschmeier and Schmitt's \cite{CS}, respectively.
In \cite{CS},
Curto and Salinas established a relationship between the class $\mathcal{B}_{n}^{m}(\Omega)$
and generalized reproducing kernels to describe when two $m$-tuples are unitarily equivalent. A similarity result for Cowen-Douglas operators in geometric terms such as curvature had been much more difficult to obtain. In fact, the work of Clark and Misra in \cite{CM2, CM} showed that the Cowen-Douglas conjecture that similarity can be determined from the behavior of the quotient of the entries of curvature matrices was false.
For the class $\mathcal{B}_{n}^{1}(\Omega)$, the work Cao, Fang, Jiang \cite{CFJ2002}, Jiang \cite{J2004}, and Jiang, Guo, Ji \cite{JGJ} involve the $K_{0}$-group of the commutant algebra as an invariant to show that an operator in $\mathcal{B}_{n}^{1}(\Omega)$ has a unique strong irreducible decomposition up to similarity.

In this paper, we study the uniqueness of finitely strong irreducible decomposition of operator tuples up to similarity by  $K$-theory of operator algebra,  and give the algebraically similarity invariants of the Cowen-Douglas tuple with index 1 by using $K_{0}$-group of the commutant of operator tuples. As an application, we calculate $K_{0}$-groups of some multiplier algebras, and describe the similarity of backwards multishifts on Drury-Arveson space by means of inflation theory.

\section{preliminaries}
\begin{definition}
An $m$-tuple $\mathbf{T}=(T_{1},T_{2},\ldots,T_{m})\in\mathcal{L}(\mathcal{H})^{m}$ is called strongly irreducible, if $\mathcal{A}^{\prime}(\mathbf{T})=\bigcap\limits_{i=1}^{m}\mathcal{A}^{\prime}(T_{i})$ the commutant of $\mathbf{T}$ does
not any nontrivial idempotent operator. If there is no self-adjoint idempotent in $\mathcal{A}^{\prime}(\mathbf{T})$, we call $\mathbf{T}$ irreducible.
\end{definition}

\begin{definition}\label{def1}
Let $\mathbf{T}=(T_{1},T_{2},\ldots,T_{m})\in\mathcal{L}(\mathcal{H})^{m}$ be an $m$-tuple and $\mathcal{P}=\{P_{i}\}_{i=1}^{n}, n<\infty,$ be a set of idempotents.
$\mathcal{P}=\{P_{i}\}_{i=1}^{n}$ is called a unit finite decomposition of $\mathbf{T}$ if it satisfies:
\begin{itemize}
  \item [(1)]$P_{i}\in\mathcal{A}^{\prime}(\mathbf{T})=\bigcap\limits_{i=1}^{m}\mathcal{A}^{\prime}(T_{i})$;
  \item [(2)]$P_{i}P_{j}=0$ for $i\neq j$;
  \item [(3)]$\sum\limits_{i=1}^{n}P_{i}=I$, where $I$ is the identity operator.
\end{itemize}
In addition, if
\begin{itemize}
  \item [(4)]$\mathbf{T}|_{P_{i}\mathcal{H}}=(T_{1}|_{P_{i}\mathcal{H}},\ldots,T_{m}|_{P_{i}\mathcal{H}})$ is strongly irreducible for $i=1,2,\ldots,n$,
\end{itemize}
we call $\mathcal{P}=\{P_{i}\}_{i=1}^{n}$ is a unit finite strongly irreducible decomposition of $\mathbf{T}$.
\end{definition}

\begin{definition}
Let $\mathbf{T}=(T_{1},\ldots,T_{m})\in\mathcal{L}(\mathcal{H})^{m}$ be an $m$-tuple. We say that $\mathbf{T}$ has finite strongly irreducible decomposition, if for any idempotent $P$ in $\mathcal{A}^{\prime}(\mathbf{T})$, $\mathbf{T}|_{P\mathcal{H}}$ has a unit finite strongly irreducible decomposition.
\end{definition}

\begin{definition}
Let $m$-tuple $\mathbf{T}=(T_{1},\ldots,T_{m})\in\mathcal{L}(\mathcal{H})^{m}$ has finite strongly irreducible decomposition. If $\mathcal{P}=\{P_{i}\}_{i=1}^{n}$ and $\mathcal{Q}=\{Q_{i}\}_{i=1}^{k}$ are any two unit finite strongly irreducible decomposition of $\mathbf{T}$ and satisfy the following:
\begin{itemize}
  \item [(1)]$n=k$;
  \item [(2)]there exists an operator $X\in GL(\mathcal{A}^{\prime}(\mathbf{T}))=\{Y: Y$ is invertible in $\mathcal{A}^{\prime}(\mathbf{T})\}$ and a permutation $\Pi\in \mathcal{S}_{n}$ such that $XP_{i}X^{-1}=Q_{\Pi(i)}$ for $i=1,2,\ldots,n$.
\end{itemize}
Then we say that $\mathbf{T}$ has unique finite strongly irreducible decomposition up to similarity.
\end{definition}

\begin{definition}\cite{Agler1988}
Let $m\geq 1$. A family is a collection $\mathcal{F}$ of $m$-tuples $\mathbf{T}=(T_{1},\ldots,T_{m})$ of Hilbert space operators, $T_{i}\in\mathcal{L}(\mathcal{H}),$ such that:
\begin{itemize}
  \item [(1)]$\mathcal{F}$ is bounded, i.e. there exists $c>0$ such that for all $\mathbf{T}=(T_{1},\ldots,T_{m})\in\mathcal{F}$ we have $\Vert T_{i}\Vert\leq c$ for all $i=1,\ldots,m,$
  \item [(2)]$\mathcal{F}$ is preserved under restrictions on invariant subspace, i.e. whenever $\mathbf{T}=(T_{1},\ldots,T_{m})\in\mathcal{F}$ and $\mathcal{M}\subseteq \mathcal{H}$ such that $T_{i}\mathcal{M}\subseteq\mathcal{M}$ for all $i$, then $T|\mathcal{M}\in\mathcal{F},$
  \item [(3)]$\mathcal{F}$ is preserved under direct sums, i.e. whenever $\mathbf{T}_{n}\in\mathcal{F}$ is a sequence of $m$-tuples, then $\oplus_{n}\mathbf{T}_{n}\in\mathcal{F},$
  \item [(4)]$\mathcal{F}$ is preserved under unital $\ast$-representations, i.e. if $\pi:\mathcal{L}(\mathcal{H})\longrightarrow\mathcal{L}(\mathcal{K})$ is a $\ast$-homomorphism with $\pi(I)=I$ and if $\mathbf{T}=(T_{1},\ldots,T_{m})\in\mathcal{F}$, then $\pi(\mathbf{T})=(\pi(T_{1}),\ldots,\pi(T_{m}))\in\mathcal{F}$.
\end{itemize}
\end{definition}

\begin{definition}\cite{Agler1988}
An $m$-tuple $\mathbf{T}=(T_{1},\ldots,T_{m})$ is called a spherical isometry if $\sum\limits_{i=1}^{m}\Vert T_{i}x\Vert^{2}=\Vert x\Vert^{2}$ for every $x\in\mathcal{H},$ that is to say,
$\sum\limits_{i=1}^{m}T_{i}^{*}T_{i}=I$.
\end{definition}

\begin{definition}\cite{Agler1988}
An $m$-tuple $\mathbf{U}=(U_{1},\ldots,U_{m})$ is called a spherical unitary if $\sum\limits_{i=1}^{m}U_{i}^{*}U_{i}=I$ and each $U_{i}$ is a normal operator.
\end{definition}

\begin{definition}
For $n\in\mathbb{N}$, an $m$-tuple $T=(T_{1},\ldots,T_{m})\in\mathcal{L}(\mathcal{H})^{m}$ is said to be an n-hypercontraction. if
$$(I_{\mathcal{H}}-T^{*}_{1}T_{1}-\cdots-T^{*}_{m}T_{m})^{k}\geq0$$
for all integers $k, 1\leq k\leq n$.
\end{definition}

\subsection{$K_{0}$-group of a unital Banach algebra}
Let $\mathcal{A}$ be a unital Banach algebra, and $a,b$ be idempotents in $\mathcal{A}.$ We denoted that $a$ and $b$ algebraic equivalence $(a\sim b)$ if there exist $x, y\in \mathcal{A}$ such that $xy=a$ and $yx=b$. We write $a\sim_{s}(\mathcal{A}) b$ if there exists a $z\in GL(\mathcal{A})$ with $zaz^{-1}=b.$ Let $M_{n}(\mathcal{A})$ be the set of all $n\times n$ matrices
\begin{equation}\begin{array}{lll}
\begin{pmatrix}\begin{smallmatrix}a_{1,1}&a_{1,2}&\cdots&a_{1,n}\\ a_{2,1}&a_{2,2}&\cdots&a_{2,n}\\ \vdots&\vdots&\ddots&\vdots\\a_{n,1}&a_{n,2}&\cdots&a_{n,n}\end{smallmatrix}\end{pmatrix},\notag
\end{array}\end{equation}
where each matric entry $a_{i,j}$ in $ \mathcal{A}$.
Set $$M_{\infty}(\mathcal{A})=\bigcup\limits_{n=1}^{\infty}M_{n}(\mathcal{A}).$$

\begin{definition}\cite{MFN}
$\mathcal{P}(\mathcal{A})$ is the set of algebraic equivalence classes of idempotents in $\mathcal{A}$ and $\bigvee(\mathcal{A})=\mathcal{P}(M_{\infty}(\mathcal{A}))$.
\end{definition}

From the classical results of K-theory, one obtains exactly the same semigroup starting with $\sim_{s}$ instead of $\sim$, since the two notions coincide on $M_{\infty}(\mathcal{A}).$

\begin{definition}\cite{MFN}
$K_{0}(\mathcal{A})$ is the Grothendieck group of $\bigvee(\mathcal{A})$.
\end{definition}

\subsection{The Cowen-Douglas Class}

\begin{lemma}
Let $\mathbf{T}=(T_{1},T_{2},\ldots,T_{m})\in\mathcal{B}_{n}^{m}(\Omega)$ and $\mathbf{T}$ be unitary equivalent to the adjoint
of $m$-tuple $\mathbf{M}_{z}=(M_{z_{1}},\ldots,M_{z_{m}})$ of multiplication operators on analytic function space $\mathcal{H}_{K}$ with reproducing kernel
$$K(z,w)=\sum\limits_{\alpha\in\mathbb{N}^{m}}\widehat{f}(\alpha)z^{\alpha}\overline{w}^{\alpha},$$
where $z,w\in\Omega$ and $\widehat{f}(\alpha)>0$ for all $\alpha\in\mathbb{N}^{m}$.
Then $\mathbf{T}$ is unitary equivalent to $m$-tuple of commuting weighted backward shifts with weight sequence $\left\{\sqrt{\frac{\widehat{f}(\alpha)}{\widehat{f}(\alpha+e_{1})}},\ldots,\sqrt{\frac{\widehat{f}(\alpha)}{\widehat{f}(\alpha+e_{m})}}\right\}_{\alpha\in\mathbb{N}^{m}}$.
\end{lemma}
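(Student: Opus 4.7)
The plan is to extract an explicit orthonormal basis of $\mathcal{H}_K$ directly from the diagonal form of the reproducing kernel, compute the action of each $M_{z_i}$ on this basis to exhibit it as a weighted forward shift, and then pass to adjoints. Since $\mathbf{T}$ is assumed unitarily equivalent to $\mathbf{M}_z^{*}$, this will immediately give the claimed description of $\mathbf{T}$ as a commuting $m$-tuple of weighted backward shifts with the prescribed weights.

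First I would use the form of the kernel $K(z,w)=\sum_{\alpha\in\mathbb{N}^m}\widehat{f}(\alpha)\,z^\alpha\overline{w}^\alpha$ together with the hypothesis $\widehat{f}(\alpha)>0$ to recognize that the functions $e_\alpha(z):=\sqrt{\widehat{f}(\alpha)}\,z^\alpha$ form an orthonormal system in $\mathcal{H}_K$: the identity $K(z,w)=\sum_\alpha e_\alpha(z)\overline{e_\alpha(w)}$ is the standard reproducing-kernel expansion, and by the Moore--Aronszajn construction the system $\{e_\alpha\}_{\alpha\in\mathbb{N}^m}$ is in fact an orthonormal basis of $\mathcal{H}_K$. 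Equivalently, the monomials $\{z^\alpha\}$ are mutually orthogonal with $\|z^\alpha\|_K^2=\widehat{f}(\alpha)^{-1}$.

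Next I would compute, for each coordinate $i=1,\ldots,m$,
\[
M_{z_i}e_\alpha \;=\; \sqrt{\widehat{f}(\alpha)}\,z^{\alpha+e_i}
\;=\; \sqrt{\frac{\widehat{f}(\alpha)}{\widehat{f}(\alpha+e_i)}}\;e_{\alpha+e_i}.
\]
Thus, with respect to the orthonormal basis $\{e_\alpha\}$ indexed by the lattice $\mathbb{N}^m$, the operator $M_{z_i}$ acts as a weighted forward shift in the $i$-th coordinate direction with weight $\sqrt{\widehat{f}(\alpha)/\widehat{f}(\alpha+e_i)}$. Taking Hilbert space adjoints and using the hypothesis $\mathbf{T}\sim_u \mathbf{M}_z^{*}=(M_{z_1}^{*},\ldots,M_{z_m}^{*})$, we conclude that $\mathbf{T}$ is unitarily equivalent to the $m$-tuple of commuting weighted backward shifts with the announced weight sequence $\bigl\{\sqrt{\widehat{f}(\alpha)/\widehat{f}(\alpha+e_i)}\bigr\}_{\alpha\in\mathbb{N}^m,\,1\le i\le m}$.

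There is no serious obstacle in this argument; the only point requiring a small amount of care is the verification that $\{e_\alpha\}$ is actually a \emph{basis}, not merely an orthonormal set, which is guaranteed because the reproducing kernel of $\mathcal{H}_K$ is recovered as $\sum_\alpha e_\alpha(z)\overline{e_\alpha(w)}$ and the span of the reproducing kernels is dense. Commutativity of the resulting weighted shifts is automatic from commutativity of the $M_{z_i}$ on polynomials, and boundedness of the weights is inherited from the Cowen--Douglas hypothesis that each $T_i$ is a bounded operator.
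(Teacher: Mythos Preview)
Your proposal is correct and follows essentially the same route as the paper: identify the orthonormal basis $e_\alpha(z)=\sqrt{\widehat{f}(\alpha)}\,z^\alpha$ from the diagonal kernel expansion, compute $M_{z_i}e_\alpha=\sqrt{\widehat{f}(\alpha)/\widehat{f}(\alpha+e_i)}\,e_{\alpha+e_i}$, and pass to adjoints. The only difference is that you are slightly more explicit about why $\{e_\alpha\}$ is a basis (via the Moore--Aronszajn construction), whereas the paper simply asserts this from the kernel identity.
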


\begin{proof}
Let $\{\mathbf{e}_{\alpha}\}_{\alpha\in\mathbb{N}^{m}}$ be the orthonormal basis of space $\mathcal{H}_{K}$. We have that
$$K(z,w)=\sum\limits_{\alpha\in\mathbb{N}^{m}}\widehat{f}(\alpha)z^{\alpha}\overline{w}^{\alpha}=\sum\limits_{\alpha\in\mathbb{N}^{m}}\mathbf{e}_{\alpha}(z)\overline{\mathbf{e}_{\alpha}(w)},\quad z,w\in\Omega.$$
So there is $\mathbf{e}_{\alpha}(z)=\sqrt{\widehat{f}(\alpha)}z^{\alpha}$ for all $\alpha\in\mathbb{N}^{m}.$
Since
$$M_{z_{i}}\mathbf{e}_{\alpha}(z)=M_{z_{i}}\sqrt{\widehat{f}(\alpha)}z^{\alpha}=\sqrt{\widehat{f}(\alpha)}z^{\alpha+e_{i}}=\sqrt{\frac{\widehat{f}(\alpha)}{\widehat{f}(\alpha+e_{i})}}\mathbf{e}_{\alpha+e_{i}}(z),\quad 1\leq i\leq m.$$
It following that
$$(M^{*}_{z_{1}}\mathbf{e}_{\alpha+e_{1}}(z),\ldots,M^{*}_{z_{m}}\mathbf{e}_{\alpha+e_{m}}(z))=\left(\sqrt{\frac{\widehat{f}(\alpha)}{\widehat{f}(\alpha+e_{1})}},
\ldots,\sqrt{\frac{\widehat{f}(\alpha)}{\widehat{f}(\alpha+e_{m})}}\right)\mathbf{e}_{\alpha}(z).$$
From $\mathbf{T}$ is unitary equivalent to the adjoint
of $m$-tuple $\mathbf{M}_{z}=(M_{z_{1}},\ldots,M_{z_{m}})$ of multiplication operators on space $\mathcal{H}_{K}$, that is, $\mathbf{T}$ is unitary equivalent to $m$-tuple $M^{*}_{z}=(M^{*}_{z_{1}},\ldots,M^{*}_{z_{m}})$ with weight sequence $\left\{\sqrt{\frac{\widehat{f}(\alpha)}{\widehat{f}(\alpha+e_{1})}},\ldots,\sqrt{\frac{\widehat{f}(\alpha)}{\widehat{f}(\alpha+e_{m})}}\right\}_{\alpha\in\mathbb{N}^{m}}$.
\end{proof}

\begin{lemma}
An $m$-tuple $\mathbf{T}=(T_{1},T_{2},\ldots,T_{m})\in\mathbf{\mathcal{B}}_{n}^{m}(\Omega)$ is reducible, then the Hermitian holomorphic vector bundle $\mathcal{E}_\mathbf{T}$ is reducible.
\end{lemma}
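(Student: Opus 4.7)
The plan is to use the self-adjoint idempotent that witnesses the reducibility of $\mathbf{T}$ to construct an orthogonal decomposition of the eigenvector bundle $\mathcal{E}_\mathbf{T}$ into two nontrivial Hermitian holomorphic subbundles.

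First I would unpack the hypothesis: since $\mathbf{T}$ is reducible, there exists a nontrivial orthogonal projection $P\in\mathcal{A}'(\mathbf{T})$, with $P=P^{*}=P^{2}$, $P\neq 0,I$, and $PT_{i}=T_{i}P$ for every $i$. Set $\mathcal{M}=P\mathcal{H}$ and $\mathcal{N}=(I-P)\mathcal{H}$, so $\mathcal{H}=\mathcal{M}\oplus\mathcal{N}$ is a reducing decomposition for each $T_{i}$. Because $P$ commutes with every $T_{i}-w_{i}$, for each $w\in\Omega$ the fiber splits orthogonally as
$$\ker(\mathbf{T}-w)=\bigl(\ker(\mathbf{T}-w)\cap\mathcal{M}\bigr)\oplus\bigl(\ker(\mathbf{T}-w)\cap\mathcal{N}\bigr)=\ker(\mathbf{T}|_{\mathcal{M}}-w)\oplus\ker(\mathbf{T}|_{\mathcal{N}}-w),$$
which is a pointwise orthogonal splitting of $\mathcal{E}_{\mathbf{T}}(w)$.

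Second I would upgrade this pointwise splitting to a splitting of Hermitian holomorphic vector bundles. Pick any $w_{0}\in\Omega$ and a local holomorphic frame $\{s_{1}(w),\ldots,s_{n}(w)\}$ for $\mathcal{E}_{\mathbf{T}}$ near $w_{0}$. Then $\{Ps_{j}(w)\}_{j=1}^{n}$ and $\{(I-P)s_{j}(w)\}_{j=1}^{n}$ are holomorphic sections (since $P$ is a fixed bounded operator) taking values in the respective subspaces, and extracting locally linearly independent subcollections yields holomorphic frames for the two candidate subbundles. Orthogonality of $\mathcal{M}$ and $\mathcal{N}$ guarantees that the induced Hermitian metric is the orthogonal direct sum, so
$$\mathcal{E}_{\mathbf{T}}=\mathcal{E}_{\mathbf{T}|_{\mathcal{M}}}\oplus\mathcal{E}_{\mathbf{T}|_{\mathcal{N}}}$$
as Hermitian holomorphic vector bundles, provided each factor is itself an honest bundle of positive constant rank.

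The main obstacle is precisely this last point: ensuring that both summands are nontrivial and of constant rank. For nontriviality, I would use that $\bigvee_{w\in\Omega}\ker(\mathbf{T}-w)=\mathcal{H}$ together with the fact that $P$ and $I-P$ are both nonzero; applying them to the total family of eigenvectors shows that $\bigvee_{w}\ker(\mathbf{T}|_{\mathcal{M}}-w)$ is dense in $\mathcal{M}$ and $\bigvee_{w}\ker(\mathbf{T}|_{\mathcal{N}}-w)$ is dense in $\mathcal{N}$, so neither subbundle has zero rank. For the constant-rank condition, I would invoke upper semicontinuity of $w\mapsto\dim\ker(\mathbf{T}|_{\mathcal{M}}-w)$ and $w\mapsto\dim\ker(\mathbf{T}|_{\mathcal{N}}-w)$ on $\Omega$, noting that their sum equals the constant $n$; on the connected set $\Omega$ this forces both to be locally, hence globally, constant. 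Combined with the fact that the closed-range and spanning conditions descend from $\mathbf{T}$ to its restrictions on the reducing subspaces, one obtains $\mathbf{T}|_{\mathcal{M}}\in\mathcal{B}_{k}^{m}(\Omega)$ and $\mathbf{T}|_{\mathcal{N}}\in\mathcal{B}_{n-k}^{m}(\Omega)$ for some $1\leq k\leq n-1$, exhibiting $\mathcal{E}_{\mathbf{T}}$ as a nontrivial direct sum of Hermitian holomorphic subbundles.
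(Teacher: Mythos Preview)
Your proposal is correct and follows essentially the same route as the paper: use the reducing orthogonal decomposition $\mathcal{H}=\mathcal{M}\oplus\mathcal{N}$ to split each fiber $\ker(\mathbf{T}-w)$ orthogonally, and conclude that $\mathcal{E}_{\mathbf{T}}=\mathcal{E}_{\mathbf{T}|_{\mathcal{M}}}\oplus\mathcal{E}_{\mathbf{T}|_{\mathcal{N}}}$. The paper's proof is terser---it simply asserts that the restrictions lie in $\mathcal{B}_{n_{1}}^{m}(\Omega)$ and $\mathcal{B}_{n_{2}}^{m}(\Omega)$---whereas you supply the details the paper omits (constant rank via upper semicontinuity on connected $\Omega$, nontriviality via the spanning hypothesis, holomorphicity of the subbundles via projecting a local frame), so your argument is in fact more complete.
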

\begin{proof}
If $\mathbf{T}=(T_{1},T_{2},\ldots,T_{m})\in\mathbf{\mathcal{B}}_{n}^{m}(\Omega)$ is reducible, we can let $\mathcal{H}_{1}$ and $\mathcal{H}_{2}$ be the reduced subspace of $\mathbf{T}=(T_{1},T_{2},\ldots,T_{m})$ and satisfy $\mathcal{H}=\mathcal{H}_{1}\oplus\mathcal{H}_{2}$. If $x$ is in $\ker(\mathbf{T}-w)=\bigcap \limits_{i=1}^{m}\ker(T_{i}-w_{i})$, there are $x_{1}$ and $x_{2}$ in $\mathcal{H}_{1}$ and $\mathcal{H}_{2}$, respectively, so that $x=x_{1}\oplus x_{2}$ and $T_{i}x_{1}\oplus T_{i}x_{2}=T_{i}x=w_{i}x=w_{i}x_{1}\oplus w_{i}x_{2}$ for $1\leq i\leq n.$
This means that both $x_{1}$ and $x_{2}$ are in $\ker(\mathbf{T}-w)=\bigcap \limits_{i=1}^{m}\ker(T_{i}-w_{i})$, so $$\ker(\mathbf{T}-w)=\{\ker(\mathbf{T}-w)\cap\mathcal{H}_{1}\}\oplus\{\ker(\mathbf{T}-w)\cap\mathcal{H}_{2}\}.$$
Then $\widetilde{\mathbf{T}}=\mathbf{T}|_{\mathcal{H}_{1}}$ is in $\mathbf{\mathcal{B}}_{n_{1}}^{m}(\Omega)$ and $\widehat{\mathbf{T}}=\mathbf{T}|_{\mathcal{H}_{2}}$ is in $\mathbf{\mathcal{B}}_{n_{2}}^{m}(\Omega)$, where $n=n_{1}+n_{2}$. Therefore, $\mathcal{E}_\mathbf{T}=\mathcal{E}_\mathbf{\widetilde{\mathbf{T}}}\oplus \mathcal{E}_\mathbf{\widehat{\mathbf{T}}}$.
\end{proof}

\begin{corollary}
If $m$-tuples $\mathbf{T}_{i}\sim_{u}(\mathbf{M}^{*}_{z}, \mathcal{H}_{K_{i}}), 1\leq i\leq 3$. Then $K_{1}(z,w)=K_{2}(z,w)\oplus K_{3}(z,w)$ if and only if $\mathbf{T}_{1}\sim_{u}\mathbf{T}_{2}\oplus\mathbf{T}_{3}$.
\end{corollary}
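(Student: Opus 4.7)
The plan is to exploit the natural correspondence between orthogonal direct sum decompositions of a reproducing kernel Hilbert space and block-diagonal decompositions of its reproducing kernel, together with the fact that the coordinate multiplication operators are compatible with such decompositions.

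For the ``if'' direction I would first read $K_{1}(z,w)=K_{2}(z,w)\oplus K_{3}(z,w)$ as a block-diagonal (matrix-valued) kernel, and observe that $\mathcal{H}_{K_{1}}$ then decomposes canonically as $\mathcal{H}_{K_{2}}\oplus\mathcal{H}_{K_{3}}$: each $f\in\mathcal{H}_{K_{1}}$ splits as $(f_{2},f_{3})$ with $f_{i}\in\mathcal{H}_{K_{i}}$, and the two subspaces are mutually orthogonal in the inner product determined by $K_{1}$. Because each coordinate $z_{j}$ is a scalar, the multiplication tuple $\mathbf{M}_{z}$ on $\mathcal{H}_{K_{1}}$ respects this decomposition, giving $\mathbf{M}_{z}^{(1)}=\mathbf{M}_{z}^{(2)}\oplus\mathbf{M}_{z}^{(3)}$. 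Taking adjoints and composing with the three unitary equivalences $\mathbf{T}_{i}\sim_{u}(\mathbf{M}^{*}_{z},\mathcal{H}_{K_{i}})$ produces $\mathbf{T}_{1}\sim_{u}\mathbf{T}_{2}\oplus\mathbf{T}_{3}$.

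For the ``only if'' direction, the hypothesis combined with $\mathbf{T}_{i}\sim_{u}(\mathbf{M}^{*}_{z},\mathcal{H}_{K_{i}})$ yields a unitary equivalence $(\mathbf{M}^{*}_{z},\mathcal{H}_{K_{1}})\sim_{u}(\mathbf{M}^{*}_{z},\mathcal{H}_{K_{2}})\oplus(\mathbf{M}^{*}_{z},\mathcal{H}_{K_{3}})$, and by the ``if'' direction applied in reverse the right-hand side can be identified with $(\mathbf{M}^{*}_{z},\mathcal{H}_{K_{2}\oplus K_{3}})$. I then plan to invoke the Curto--Salinas rigidity result mentioned in the preliminaries, together with the preceding lemma of the paper which transfers an orthogonal reducing decomposition of a Cowen--Douglas tuple to a decomposition of its Hermitian holomorphic eigenvector bundle $\mathcal{E}_{\mathbf{T}}$. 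Feeding that bundle decomposition into the explicit kernel formula $K_{\mathbf{T}}(z,w)=\sum_{\alpha}\mathbf{e}_{\alpha}(z)\overline{\mathbf{e}_{\alpha}(w)}$ of the first lemma (or its generalized reproducing kernel analogue) and writing the orthonormal frame as a disjoint union of frames for the two subbundles forces $K_{1}(z,w)=K_{2}(z,w)\oplus K_{3}(z,w)$.

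The main obstacle I expect is in this last step: an abstract unitary intertwining $(\mathbf{M}^{*}_{z},\mathcal{H}_{K_{1}})$ with $(\mathbf{M}^{*}_{z},\mathcal{H}_{K_{2}\oplus K_{3}})$ does not \emph{a priori} preserve the evaluation functionals, and thus does not automatically equate the two reproducing kernels as functions. Bridging ``unitarily equivalent multiplication models'' to ``equal reproducing kernels'' requires either appealing to the uniqueness in the Curto--Salinas classification (where the kernel is determined by the operator tuple up to a canonical holomorphic reframing that one absorbs into the identification), or exhibiting a unitary that carries kernel functions to kernel functions by construction. Once this identification is in hand the kernel identity follows directly from the bundle decomposition provided by the preceding lemma.
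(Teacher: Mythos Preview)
The paper gives no explicit proof of this corollary; it is stated immediately after the lemma showing that reducibility of a Cowen--Douglas tuple forces reducibility of its Hermitian holomorphic eigenvector bundle, and is evidently meant to be read off from that lemma together with the standard dictionary between kernels, bundles, and multiplication models recalled in the surrounding paragraphs. Your proposal fleshes out exactly this intended argument: the ``if'' direction via the canonical splitting $\mathcal{H}_{K_{2}\oplus K_{3}}=\mathcal{H}_{K_{2}}\oplus\mathcal{H}_{K_{3}}$, and the ``only if'' direction by transporting the reducing decomposition of $\mathbf{T}_{1}$ to a splitting of $\mathcal{E}_{\mathbf{T}_{1}}$ (the preceding lemma) and then reading off the kernel decomposition.

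Your flagged obstacle is genuine and is precisely the point the paper leaves implicit. A bare unitary equivalence $(\mathbf{M}^{*}_{z},\mathcal{H}_{K_{1}})\sim_{u}(\mathbf{M}^{*}_{z},\mathcal{H}_{K_{2}\oplus K_{3}})$ only determines the kernels up to a holomorphic change of frame, not as functions; so the literal equality $K_{1}=K_{2}\oplus K_{3}$ requires either interpreting ``$=$'' as equivalence of Hermitian holomorphic bundles (which is almost certainly the paper's intent, given that the corollary is positioned as a restatement of the bundle-reducibility lemma in kernel language), or invoking the Curto--Salinas uniqueness you mention to pin down the frame. Your plan handles both readings, and in that sense is more careful than the paper itself.
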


\subsection{The Drury-Arveson space $H_{m}^{2}$}
The Drury-Arveson space $H_{m}^{2}$ is a Hilbert space introduced by Drury in \cite{D1978}, and after Arveson made further research on the space in \cite{A1997}. This space generalizes the classical Hardy space $H^{2}$ on the unit disc to several variables and it is widely used in operator theory and function theory.

The Drury-Arveson space $H_{m}^{2}$ is identified with the space of holomorphic functions $f:\mathbb{B}_{m}\rightarrow \mathbb{C}$ which have a power series $f(z)=\sum\limits_{\alpha\in\mathbb{N}^{m}}a_{\alpha}z^{\alpha}$ such that
$$\Vert f\Vert^{2}_{H_{m}^{2}}:=\sum\limits_{\alpha\in\mathbb{N}^{m}}|a_{\alpha}|^{2}\frac{\alpha!}{|\alpha|!}<\infty.$$
The space $H_{m}^{2}$ is also a reproducing kernel Hilbert space with reproducing kernel
$$K(z,w)=\frac{1}{1-\langle z, w\rangle}=
\sum\limits_{\alpha\in\mathbb{N}^{m}}\frac{|\alpha|!}{\alpha!}z^{\alpha}\overline{w}^{\alpha},\quad z,w\in\mathbb{B}^{m}.$$
For $m$-shift $\mathbf{M}_{z}=(M_{z_{1}},\ldots,M_{z_{m}})$ and $\alpha=(\alpha_{1},\ldots,\alpha_{m})\in\mathbb{N}^{m}$, a small computation reveals that
$$M^{*}_{z_{i}}z^{\alpha}=
\begin{cases}
\frac{\alpha_{i}}{|\alpha|}z^{\alpha-e_{i}}& \text{if}~\alpha_{i}\neq 0 \\
0& \text{otherwise}.
\end{cases}$$
This implies that
$$\sum\limits_{i=1}^{m}M_{z_{i}}M^{*}_{z_{i}}=I-1\otimes1\leq I.$$
Let $\{\mathbf{e}_{\alpha}\}_{\alpha\in\mathbb{N}^{m}}$ be the orthonormal basis of space $H_{m}^{2}$, and then
$$\sum\limits_{\alpha\in\mathbb{N}^{m}}\frac{|\alpha|!}{\alpha!}z^{\alpha}\overline{w}^{\alpha}=K(z,w)=\sum\limits_{\alpha\in\mathbb{N}^{m}}\mathbf{e}_{\alpha}(z)\mathbf{e}^{*}_{\alpha}(w),\quad z,w\in\mathbb{B}^{m}.$$
So $\mathbf{e}_{\alpha}(z)=\sqrt{\frac{|\alpha|!}{\alpha!}}z^{\alpha}$ for all $\alpha\in\mathbb{N}^{m}.$

\begin{lemma}
For the Drury-Arveson space $H_{m}^{2}$ and any $w\in\mathbb{B}^{m}$, we have that
$$\ker(\mathbf{M}^{*}_{z}-w)=\bigcap \limits_{i=1}^{m}\ker(M^{*}_{z_{1}}-w_{i})=\bigvee  K(\cdot, \overline{w}).$$
\end{lemma}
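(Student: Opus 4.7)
The first equality is immediate from the definition of the joint kernel of the operator tuple $\mathbf{M}_z^*-w$, so the real content is the second equality. The plan is to verify both inclusions, with the reproducing kernel property as the central tool.

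For the containment $\bigvee K(\cdot,\overline{w})\subseteq \bigcap_{i=1}^m \ker(M_{z_i}^*-w_i)$, I would show directly that $M_{z_i}^* K(\cdot,\overline{w})=w_i\,K(\cdot,\overline{w})$ for each $i$. The cleanest way is to compute, for arbitrary $f\in H_m^2$ and $v\in\mathbb{B}^m$,
\[
\langle M_{z_i}^* K(\cdot,v),f\rangle=\langle K(\cdot,v), M_{z_i}f\rangle=\overline{v_i\,f(v)}=\overline{v_i}\,\langle K(\cdot,v),f\rangle,
\]
which yields $M_{z_i}^* K(\cdot,v)=\overline{v_i}K(\cdot,v)$; taking $v=\overline{w}$ gives the desired eigenvalue $w_i$. (Alternatively, one can plug the series $K(z,\overline{w})=\sum_\alpha \tfrac{|\alpha|!}{\alpha!}z^\alpha w^\alpha$ into the formula $M_{z_i}^* z^\alpha=\tfrac{\alpha_i}{|\alpha|}z^{\alpha-e_i}$ recorded just above the statement.)

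For the reverse containment, let $f=\sum_{\alpha}a_\alpha z^\alpha\in H_m^2$ satisfy $M_{z_i}^*f=w_i f$ for all $i$. Using the same formula $M_{z_i}^*z^\alpha=\tfrac{\alpha_i}{|\alpha|}z^{\alpha-e_i}$, equate coefficients of $z^\alpha$ on both sides to obtain the recursion
\[
a_{\alpha+e_i}\,\frac{\alpha_i+1}{|\alpha|+1}=w_i\,a_\alpha,\qquad \alpha\in\mathbb{N}^m,\ 1\le i\le m.
\]
An induction on $|\alpha|$, together with the consistency check that iterating $\alpha\mapsto\alpha+e_i$ in any order yields the same value, gives $a_\alpha=a_0\,\tfrac{|\alpha|!}{\alpha!}w^\alpha$, so $f=a_0\,K(\cdot,\overline{w})$. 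This forces $f\in\bigvee K(\cdot,\overline{w})$ and also confirms $\dim\ker(\mathbf{M}_z^*-w)=1$, i.e.\ that $(H_m^2,\mathbf{M}_z^*)\in\mathcal{B}_1^m(\mathbb{B}^m)$.

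The computations are routine; the main thing to watch is the conjugation, since the kernel function has to be evaluated at $\overline{w}$ (not $w$) to produce the eigenvalue $w_i$ instead of $\overline{w_i}$. A secondary care point is verifying that the recursion is consistent when it is iterated in different orders $i_1,\dots,i_k$; this is straightforward because the factor $\tfrac{|\alpha|!}{\alpha!}$ is symmetric in the sequence of index increments. No deeper obstacle is expected.
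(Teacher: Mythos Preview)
Your proposal is correct and follows essentially the same approach as the paper: expand an arbitrary element of the joint kernel in a basis, derive the coefficient recursion from $M_{z_i}^* z^\alpha=\tfrac{\alpha_i}{|\alpha|}z^{\alpha-e_i}$, and solve it to identify the function as a scalar multiple of $K(\cdot,\overline w)$. The only cosmetic differences are that the paper expands in the orthonormal basis $\mathbf{e}_\alpha=\sqrt{\tfrac{|\alpha|!}{\alpha!}}\,z^\alpha$ rather than the monomials (so its recursion carries square roots), and the paper omits the easy inclusion $\bigvee K(\cdot,\overline w)\subseteq\ker(\mathbf{M}_z^*-w)$ altogether, whereas you supply it via the reproducing property.
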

\begin{proof}
Without losing generality, assume that
$$f(z)=\sum\limits_{\alpha\in\mathbb{N}^{m}}a_{\alpha}\mathbf{e}_{\alpha}(z)\in\ker(\mathbf{M}^{*}_{z}-w)=\bigcap \limits_{i=1}^{m}\ker(M^{*}_{z_{i}}-w_{i}).$$
Then we have that
$$\begin{array}{lll}
0&=&(M^{*}_{z_{i}}-w_{i})\sum\limits_{\alpha\in\mathbb{N}^{m}}a_{\alpha}\mathbf{e}_{\alpha}(z)\\[5pt]
&=&M^{*}_{z_{i}}\sum\limits_{\alpha\in\mathbb{N}^{m}}a_{\alpha}\sqrt{\frac{|\alpha|!}{\alpha!}}z^{\alpha}-
\left(w_{i}\sum\limits_{\alpha\in\mathbb{N}^{m}}a_{\alpha}\sqrt{\frac{|\alpha|!}{\alpha!}}z^{\alpha}\right)\\[5pt]
&=&\sum\limits_{\alpha\in\mathbb{N}^{m} \atop \alpha_{i}\geq1}a_{\alpha}\sqrt{\frac{\alpha_{i}}{\vert\alpha\vert}}\sqrt{\frac{|\alpha-e_{i}|!}{(\alpha-e_{i})!}}z^{\alpha-e_{i}}-
\left(w_{i}\sum\limits_{\alpha\in\mathbb{N}^{m}}a_{\alpha}\sqrt{\frac{|\alpha|!}{\alpha!}}z^{\alpha}\right)\\[5pt]
&=&\sum\limits_{\alpha\in\mathbb{N}^{m}}\left[\sqrt{\frac{\alpha_{i}+1}{|\alpha+e_{i}|}}a_{\alpha+e_{i}}-w_{i}a_{\alpha}\right]\mathbf{e}_{\alpha}(z).
\end{array}$$
It follows that $a_{\alpha+e_{i}}=\sqrt{\frac{|\alpha+e_{i}|}{\alpha_{i}+1}}w_{i}a_{\alpha}$ for any $1\leq i\leq m$ and all $\alpha\in\mathbb{N}^{m}.$
There is no loss of generality in assuming $a_{0}=1$, we conclude that $a_{\alpha}=\sqrt{\frac{|\alpha|!}{\alpha!}}w^{\alpha},$ hence that $f(z)=\sum\limits_{\alpha\in\mathbb{N}^{m}}\sqrt{\frac{|\alpha|!}{\alpha!}}w^{\alpha}\mathbf{e}_{\alpha}(z),$ and finally that the result is valid.
\end{proof}

Let $H$ be a separable Hilbert space, $\{\mathbf{e}_{\alpha}\}_{\alpha\in \mathbb{N}^{m}}$ be an orthonormal basis of the Hilbert space $\mathcal{H}=\ell^{2}(\mathbb{N}^{m},H)$ composed of functions $f$ satisfying
$$\Vert f \Vert^{2}=\sum\limits_{\alpha\in\mathbb{N}^{m}}|f(\alpha)|^{2}<\infty.$$
The $m$-tuple $\mathbf{V}=(V_{1},\ldots,V_{m})$ of operator $V_{i}\in\mathcal{L}(\mathcal{H})$ defined by
$$V_{i}\mathbf{e}_{\alpha}=\sqrt{\frac{\alpha_{ i}+1}{|\alpha|+m}}\mathbf{e}_{\alpha+e_{i}}$$
will be referred to as a $\textit{spherical shift}$. For the spherical shift  $\mathbf{V}=(V_{1},\ldots,V_{m})$, one has $V_{i}V_{j}=V_{j}V_{i}$ for $1\leq i, j\leq m$ and also that the equality $I_{\mathcal{H}}-V^{*}_{1}V_{1}-\cdots-V^{*}_{m}V_{m}=0$ holds, thus $V$ is a \textit{spherical isometry}.

\section{The uniqueness of finite strongly irreducible decomposition of operator tuples}

In this section, we mainly use the $K_{0}$-group on the commutative algebra to characterize that the $m$-tuple of commuting operators $\mathbf{T}=(T_{1},\ldots,T_{m})\in\mathcal{L}(\mathcal{H})^{m}$ has unique finite strongly irreducible decomposition up to similarity. In order to prove the main Theorem \ref{2-1}, we need to prove the following Lemmas.

\begin{lemma}\label{2-2}
Let $\mathbf{T}=(T_{1},\ldots,T_{m}), \mathbf{\widetilde{T}}=(\widetilde{T}_{1},\ldots,\widetilde{T}_{m})\in\mathcal{L}(\mathcal{H})^{m}$, and let $\psi$ be an isomorphism from $\mathcal{A}^{\prime}(\mathbf{T})$ to $\mathcal{A}^{\prime}(\mathbf{\widetilde{T}})$. Then $\{P_{i}\}_{i=1}^{n}$ is a unit strongly irreducible decomposition of $\mathbf{T}$ if and only if $\{\psi(P_{i})\}_{i=1}^{n}$ is a strongly irreducible decomposition of $\mathbf{\widetilde{T}}$. In particular, if $\mathbf{T}\sim_{s}\mathbf{\widetilde{T}}$, then $\mathcal{A}^{\prime}(\mathbf{T})\cong\mathcal{A}^{\prime}(\mathbf{\widetilde{T}}).$
\end{lemma}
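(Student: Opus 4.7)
The plan is to verify the four defining conditions of a unit finite strongly irreducible decomposition (Definition \ref{def1}) are each transferred by $\psi$, using the standing assumption that $\psi$ is an algebra isomorphism (hence automatically unital, since an isomorphism of unital algebras must send $I$ to $I$). Conditions (1)--(3) are essentially formal: $\psi(P_i)$ lies in $\mathcal{A}^{\prime}(\mathbf{\widetilde{T}})$ by the codomain of $\psi$, orthogonality $\psi(P_i)\psi(P_j)=\psi(P_iP_j)=0$ follows from multiplicativity, and $\sum_i\psi(P_i)=\psi(\sum_i P_i)=\psi(I)=I$ from linearity and unitality. So the only genuine content is condition (4): strong irreducibility of each restriction $\mathbf{T}|_{P_i\mathcal{H}}$.

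For step two (the main step), I would set up the standard identification between corner algebras and commutants of restrictions: for any idempotent $P\in\mathcal{A}^{\prime}(\mathbf{T})$, the map $\Phi_P\colon P\mathcal{A}^{\prime}(\mathbf{T})P\longrightarrow \mathcal{A}^{\prime}(\mathbf{T}|_{P\mathcal{H}})$ sending $PXP$ to its restriction to the invariant subspace $P\mathcal{H}$ is a unital algebra isomorphism. Injectivity is immediate, multiplicativity follows from $PXP\cdot PYP=PXPYP$, and surjectivity uses that any $Y\in\mathcal{A}^{\prime}(\mathbf{T}|_{P\mathcal{H}})$ extends to $\widetilde Y := Y P\in\mathcal{L}(\mathcal{H})$, which one checks commutes with each $T_j$ using $P T_j=T_j P$. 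The crucial point is that $P$ is the unit of the corner algebra and $\Phi_P$ sends it to the identity of $\mathcal{L}(P\mathcal{H})$, so nontrivial idempotents correspond to nontrivial idempotents on both sides.

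Combining the two, $\psi$ restricts to a unital algebra isomorphism $P_i\mathcal{A}^{\prime}(\mathbf{T})P_i\longrightarrow \psi(P_i)\mathcal{A}^{\prime}(\mathbf{\widetilde{T}})\psi(P_i)$, hence induces a unital isomorphism $\mathcal{A}^{\prime}(\mathbf{T}|_{P_i\mathcal{H}})\cong \mathcal{A}^{\prime}(\mathbf{\widetilde{T}}|_{\psi(P_i)\mathcal{H}})$ via the two $\Phi$-maps. Since any such isomorphism carries idempotents to idempotents and preserves $0$ and the identity, strong irreducibility of one restriction is equivalent to that of the other. This closes the ``if and only if''.

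For the ``in particular'' statement, given $X\in GL(\mathcal{L}(\mathcal{H}))$ with $X\mathbf{T}=\mathbf{\widetilde{T}}X$, the map $\psi(Y):=XYX^{-1}$ is a bounded unital algebra isomorphism $\mathcal{L}(\mathcal{H})\to\mathcal{L}(\mathcal{H})$; a direct computation using $X^{-1}\widetilde{T}_j=T_jX^{-1}$ shows $\psi$ sends $\mathcal{A}^{\prime}(\mathbf{T})$ into $\mathcal{A}^{\prime}(\mathbf{\widetilde{T}})$, and its inverse $Y\mapsto X^{-1}YX$ gives the two-sided inverse, establishing $\mathcal{A}^{\prime}(\mathbf{T})\cong\mathcal{A}^{\prime}(\mathbf{\widetilde{T}})$. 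I expect the main technical obstacle to be nailing down the identification $\Phi_P$ carefully and confirming that $\psi$ really does restrict to an isomorphism at the level of corner algebras; everything else is book-keeping on definitions.
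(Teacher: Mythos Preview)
Your proposal is correct and, for conditions (1)--(3) and the ``in particular'' clause, matches the paper almost verbatim. For condition (4) the paper takes a shorter route: rather than building the full corner-algebra isomorphism $\Phi_P\colon P\mathcal{A}'(\mathbf{T})P\to \mathcal{A}'(\mathbf{T}|_{P\mathcal{H}})$, it argues by contradiction --- if $\mathbf{\widetilde{T}}|_{\psi(P_i)\mathcal{H}}$ were strongly reducible, one would have nonzero idempotents $Q_1,Q_2\in\mathcal{A}'(\mathbf{\widetilde{T}})$ with $Q_1+Q_2=\psi(P_i)$ and $Q_1Q_2=0$, and then $\psi^{-1}(Q_1),\psi^{-1}(Q_2)$ would witness reducibility of $\mathbf{T}|_{P_i\mathcal{H}}$. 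Your corner-algebra framework is exactly what justifies the paper's implicit passage between idempotents in $\mathcal{A}'(\mathbf{\widetilde{T}}|_{\psi(P_i)\mathcal{H}})$ and sub-idempotents of $\psi(P_i)$ in $\mathcal{A}'(\mathbf{\widetilde{T}})$; so the two arguments have the same content, with yours being more explicit and the paper's more ad hoc.
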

\begin{proof}
Since $\psi$ is an isomorphic mapping from $\mathcal{A}^{\prime}(\mathbf{T})$ to $\mathcal{A}^{\prime}(\mathbf{\widetilde{T}})$, we only prove the necessity, and its sufficiency can be similarly proved.
Since $\{P_{i}\}_{i=1}^{n}$ is a unit strongly irreducible decomposition of $\mathbf{T}$, we have that $\{P_{i}\}_{i=1}^{n}\subset\mathcal{A}^{\prime}(\mathbf{T})$, and then $\psi(P_{i})\in\mathcal{A}^{\prime}(\mathbf{\widetilde{T}})$, $\psi(P_{i})\psi(P_{j})=\psi(P_{i}P_{j})=\psi(0)=0$ for $i\neq j$, and $\sum\limits_{i=1}^{n}\psi(P_{i})=\psi(\sum\limits_{i=1}^{n}P_{i})=\psi(I)=I$.
From Definition \ref{def1}, $\{\psi(P_{i})\}_{i=1}^{n}$ is a unit decomposition of $\mathbf{\widetilde{T}}$.
Now show we that $\mathbf{\widetilde{T}}|_{\psi(P_{i})\mathcal{H}}, 1\leq i\leq n,$ are strongly irreducible. Assume that there is $i,1\leq i\leq n,$ such that $\mathbf{\widetilde{T}}|_{\psi(P_{i})\mathcal{H}}$ is strongly reducible, that is, there are non-zero idempotents $Q_{1}$ and $Q_{2}$ in $\mathcal{A}^{\prime}(\mathbf{\widetilde{T}})$ such that $Q_{1}+Q_{2}=\psi(P_{i})$ and $Q_{1}Q_{2}=Q_{2}Q_{1}=0$. Since $\psi$ is isomorphic, $\psi^{-1}(Q_{1})$ and $\psi^{-1}(Q_{2})$ are non-zero idempotents in $\mathcal{A}^{\prime}(\mathbf{T})$ and satisfy $P_{i}=\psi^{-1}(Q_{1})+\psi^{-1}(Q_{2})$. Obviously, this contradicts that $\mathbf{T}|_{P_{i}\mathcal{H}}$  is strongly irreducible. So $\{\psi(P_{i})\}_{i=1}^{n}$ is a strongly irreducible decomposition of $\mathbf{\widetilde{T}}$.

If $\mathbf{T}$ is similar to $\mathbf{\widetilde{T}}$, there is an invertible operator $X\in\mathcal{L}(\mathcal{H})$ such that $XT_{i} = \widetilde{T}_{i}X, 1\leq i\leq m$.
Define the mapping $\varphi:\mathcal{A}^{\prime}(\mathbf{T})\rightarrow\mathcal{A}^{\prime}(\mathbf{\widetilde{T}})$ as
$\varphi(Y)=XYX^{-1}$ for any $Y\in\mathcal{A}^{\prime}(\mathbf{T}).$
Then $\varphi$ is an isomorphic mapping and $\mathcal{A}^{\prime}(\mathbf{T})\cong\mathcal{A}^{\prime}(\mathbf{\widetilde{T}}).$
\end{proof}

\begin{lemma}\label{2-3}
Let $\mathbf{T}=(T_{1},\ldots,T_{m})\in\mathcal{L}(\mathcal{H})^{m}$, and let $P_{1}$ and $P_{2}$ be idempotent operators in $\mathcal{A}^{\prime}(\mathbf{T})$.
If $P_{1}\sim_{s}(\mathcal{A}^{\prime}(\mathbf{T}))P_{2}$, then $\mathbf{T}|_{P_{1}\mathcal{H}}\sim_{s}\mathbf{T}|_{P_{2}\mathcal{H}}$, that is, there is an invertible operator $Y\in\mathcal{L}(P_{1}\mathcal{H},P_{2}\mathcal{H})$ such that
$YT_{i}|_{P_{1}\mathcal{H}} = T_{i}|_{P_{2}\mathcal{H}}Y$ for $1\leq i\leq m.$
\end{lemma}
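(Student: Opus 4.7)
My plan is to unpack the hypothesis $P_{1}\sim_{s}(\mathcal{A}^{\prime}(\mathbf{T}))P_{2}$, turn it into an intertwining identity, and then restrict. By definition, there is an invertible $Z\in\mathcal{A}^{\prime}(\mathbf{T})$ with $ZP_{1}Z^{-1}=P_{2}$, equivalently $ZP_{1}=P_{2}Z$ and (using $Z^{-1}\in\mathcal{A}^{\prime}(\mathbf{T})$ as well) $Z^{-1}P_{2}=P_{1}Z^{-1}$. Note also that each $P_{j}\mathcal{H}$ is invariant under every $T_{i}$, since $P_{j}$ commutes with $T_{i}$.

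Next I would define $Y:P_{1}\mathcal{H}\to P_{2}\mathcal{H}$ as the restriction $Y:=Z|_{P_{1}\mathcal{H}}$. The identity $ZP_{1}=P_{2}Z$ shows that for $x\in P_{1}\mathcal{H}$ one has $Zx=ZP_{1}x=P_{2}Zx\in P_{2}\mathcal{H}$, so $Y$ really lands in $P_{2}\mathcal{H}$, and it is bounded as a restriction of a bounded operator. Symmetrically, $W:=Z^{-1}|_{P_{2}\mathcal{H}}$ is bounded and lands in $P_{1}\mathcal{H}$. A direct computation then gives $WYx=Z^{-1}Zx=x$ for $x\in P_{1}\mathcal{H}$ and $YWy=ZZ^{-1}y=y$ for $y\in P_{2}\mathcal{H}$, so $Y\in\mathcal{L}(P_{1}\mathcal{H},P_{2}\mathcal{H})$ is invertible with $Y^{-1}=W$.

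Finally, I would verify the intertwining. For any $x\in P_{1}\mathcal{H}$ and any $i$ with $1\leq i\leq m$, since $P_{1}\mathcal{H}$ is $T_{i}$-invariant we have $T_{i}|_{P_{1}\mathcal{H}}x=T_{i}x\in P_{1}\mathcal{H}$, so
\[
Y\bigl(T_{i}|_{P_{1}\mathcal{H}}\bigr)x \;=\; Z T_{i} x \;=\; T_{i} Z x \;=\; T_{i}(Yx) \;=\; \bigl(T_{i}|_{P_{2}\mathcal{H}}\bigr) Y x,
\]
where the middle equality uses $Z\in\mathcal{A}^{\prime}(\mathbf{T})$. This is exactly the required similarity $\mathbf{T}|_{P_{1}\mathcal{H}}\sim_{s}\mathbf{T}|_{P_{2}\mathcal{H}}$.

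I do not expect a real obstacle here: the statement is essentially the observation that a similarity implemented inside the commutant automatically respects the cut-downs by the idempotents, because the implementing operator intertwines the tuple and conjugates one idempotent into the other. The only point to be slightly careful about is checking that $Y$ and $Y^{-1}$ send the correct subspaces into each other, which is handled by rewriting $ZP_{1}=P_{2}Z$ in both directions.
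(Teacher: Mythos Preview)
Your proof is correct and follows essentially the same approach as the paper: both pick an invertible $X\in\mathcal{A}'(\mathbf{T})$ with $XP_{1}X^{-1}=P_{2}$, observe that $X$ maps $P_{1}\mathcal{H}$ onto $P_{2}\mathcal{H}$, and take the restriction $X|_{P_{1}\mathcal{H}}$ as the required similarity. The paper additionally writes out the complementary piece $X|_{(I-P_{1})\mathcal{H}}$ and the block-diagonal matrix forms of $\mathbf{T}$ with respect to both decompositions, but this is not needed for the conclusion and your more streamlined argument is entirely adequate.
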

\begin{proof}
Since $P_{1}\sim_{s}(\mathcal{A}^{\prime}(\mathbf{T}))P_{2}$, there is an operator $X\in GL(\mathcal{A}^{\prime}(\mathbf{T}))$ such that $XP_{1}X^{-1}=P_{2}$, and then $X(I-P_{1})X^{-1}=(I-P_{2})$.
It follows that $X \text{ran\,}P_{1}=\text{ran\,}P_{2}$ and $X\text{ran\,}(I-P_{1})=\text{ran\,} (I-P_{2})$.
Letting $X_{1}:=X|_{\text{ran\,}P_{1}}$ and $X_{2}:=X|_{\text{ran\,}(I-P_{1})}$, we have that $X_{1}\in GL(\mathcal{L}(P_{1}\mathcal{H},P_{2}\mathcal{H}))$,
$X_{2}\in GL(\mathcal{L}((I-P_{1})\mathcal{H},(I-P_{2})\mathcal{H}))$ and $X=X_{1}\dot{+}X_{2}$, where $\dot{+}$ denotes the topological direct sum.
Since $P_{1}$ and $P_{2}$ are idempotent operators in $\mathcal{A}^{\prime}(\mathbf{T})$, we obtain that
\begin{equation}
\begin{array}{lll}
\mathbf{T}=(T_{1},T_{2},\ldots,T_{m})&=\left(\begin{pmatrix}T_{0,1}&0\\ 0&T_{1,1}\end{pmatrix},\begin{pmatrix}T_{0,2}&0\\ 0&T_{1,2}\end{pmatrix},\cdots,\begin{pmatrix}T_{0,m}&0\\ 0&T_{1,m}\end{pmatrix}\right)\begin{matrix}P_{1}\mathcal{H}\\ (I-P_{1})\mathcal{H}\end{matrix}\\[10pt]
&=\left(\begin{pmatrix}\widetilde{T}_{0,1}&0\\ 0&\widetilde{T}_{1,1}\end{pmatrix},\begin{pmatrix}\widetilde{T}_{0,2}&0\\ 0&\widetilde{T}_{1,2}\end{pmatrix},\cdots,\begin{pmatrix}\widetilde{T}_{0,m}&0\\ 0&\widetilde{T}_{1,m}\end{pmatrix}\right)\begin{matrix}P_{2}\mathcal{H}\\ (I-P_{2})\mathcal{H}\end{matrix},\notag
\end{array}
\end{equation}
where $T_{0,i}=T_{i}|_{P_{1}\mathcal{H}}$, $T_{1,i}=T_{i}|_{(I-P_{1})\mathcal{H}}$, $\widetilde{T}_{0,i}=T_{i}|_{P_{2}\mathcal{H}}$ and $\widetilde{T}_{1,i}=T_{i}|_{(I-P_{2})\mathcal{H}}$ for $1\leq i\leq m.$ From $XP_{1}X^{-1}=P_{2}$, $X(I-P_{1})X^{-1}=(I-P_{2})$, $X\in GL(\mathcal{A}^{\prime}(\mathbf{T}))$ and $P_{1}\in\mathcal{A}^{\prime}(\mathbf{T})$, we know that $$T_{i}P_{2}X=T_{i}XP_{1}=XT_{i}P_{1}\quad\text{and}\quad T_{i}(I-P_{2})X=T_{i}X(I-P_{1})=XT_{i}(I-P_{1}), \quad 1\leq i\leq m.$$
This means that
$$\left(\begin{pmatrix}\widetilde{T}_{0,1}&0\\ 0&\widetilde{T}_{1,1}\end{pmatrix},\cdots,\begin{pmatrix}\widetilde{T}_{0,m}&0\\ 0&\widetilde{T}_{1,m}\end{pmatrix}\right)\begin{pmatrix}X_{1}&0\\ 0&X_{2}\end{pmatrix}=\begin{pmatrix}X_{1}&0\\ 0&X_{2}\end{pmatrix}\left(\begin{pmatrix}T_{0,1}&0\\ 0&T_{1,1}\end{pmatrix},\cdots,\begin{pmatrix}T_{0,m}&0\\ 0&T_{1,m}\end{pmatrix}\right).$$
Thus $\mathbf{T}|_{P_{1}\mathcal{H}}=(T_{0,1},\ldots,T_{0,m})$ is similar to $\mathbf{T}|_{P_{2}\mathcal{H}}=(\widetilde{T}_{0,1},\ldots,\widetilde{T}_{0,m}).$
\end{proof}

\begin{lemma}\label{2-4}
Let $\mathbf{T}=(T_{1},\cdots,T_{m})\in\mathcal{L}(\mathcal{H})^{m}$. Suppose that $\{P_{i}\}_{i=1}^{n}$ and $\{Q_{i}\}_{i=1}^{n}$ are two unit strongly irreducible decomposition of $\mathbf{T}$. If there exist $X_{i}\in GL(\mathcal{L}(P_{i}\mathcal{H},Q_{i}\mathcal{H}))$ such that
$$X_{i}(T_{j}|_{P_{i}\mathcal{H}})X_{i}^{-1}=T_{j}|_{Q_{i}\mathcal{H}},\quad 1\leq i\leq n, 1\leq j\leq m.$$
Then $X=X_{1}\dot{+}X_{2}\dot{+}\cdots\dot{+}X_{n}\in GL(\mathcal{A}^{\prime}(\mathbf{T}))$, where $\dot{+}$ denotes the topological direct sum.
\end{lemma}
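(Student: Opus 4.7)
The plan is to define $X$ explicitly via the decomposition $\{P_i\}$, verify boundedness and invertibility using the dual decomposition $\{Q_i\}$, and then check the commutation relation on each summand. Since $\{P_i\}_{i=1}^n$ and $\{Q_i\}_{i=1}^n$ are unit finite decompositions, they yield topological direct sum decompositions $\mathcal{H}=P_1\mathcal{H}\dot{+}\cdots\dot{+}P_n\mathcal{H}=Q_1\mathcal{H}\dot{+}\cdots\dot{+}Q_n\mathcal{H}$, and every $x\in\mathcal{H}$ admits the unique representation $x=\sum_i P_i x$ with $\|P_i x\|\le \|P_i\|\|x\|$.

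First I would set $X:=\sum_{i=1}^n X_i P_i$, where each $X_i P_i$ is interpreted as the bounded operator on $\mathcal{H}$ sending $x\in\mathcal{H}$ to $X_i(P_i x)\in Q_i\mathcal{H}\subseteq\mathcal{H}$. Boundedness is immediate from the estimate $\|Xx\|\le \sum_i \|X_i\|\|P_i\|\|x\|$. Next I would produce a candidate inverse $Y:=\sum_{i=1}^n X_i^{-1}Q_i$, bounded for the same reason. The verification $YX=XY=I$ reduces to the identity $Q_i X_j P_j = \delta_{ij}X_j P_j$, which holds because $X_j P_j$ has range in $Q_j\mathcal{H}$ and $\{Q_i\}$ is a unit decomposition; a short computation then yields $YXx=\sum_i X_i^{-1}X_i P_i x=\sum_i P_i x=x$, and symmetrically $XYx=x$.

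For the commutation with $\mathbf{T}$, note that $P_i,Q_i\in\mathcal{A}^{\prime}(\mathbf{T})$ implies that each $T_j$ leaves both $P_i\mathcal{H}$ and $Q_i\mathcal{H}$ invariant. For $x\in P_i\mathcal{H}$ one has $T_j x\in P_i\mathcal{H}$, and hence $X T_j x = X_i (T_j|_{P_i\mathcal{H}}) x$, while $T_j X x = T_j X_i x = (T_j|_{Q_i\mathcal{H}}) X_i x$ because $X_i x\in Q_i\mathcal{H}$. The hypothesis $X_i (T_j|_{P_i\mathcal{H}})=(T_j|_{Q_i\mathcal{H}}) X_i$ then gives $X T_j x = T_j X x$ on each $P_i\mathcal{H}$, and linearity extends this to all of $\mathcal{H}$. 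Thus $X\in \mathcal{A}^{\prime}(\mathbf{T})$, and by the same argument applied to $\{X_i^{-1}\}$ and the swapped roles of $\{P_i\},\{Q_i\}$, also $Y=X^{-1}\in\mathcal{A}^{\prime}(\mathbf{T})$, so $X\in GL(\mathcal{A}^{\prime}(\mathbf{T}))$.

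There is no real obstacle here: the whole argument is bookkeeping about the two topological direct sum decompositions. The only point that deserves care is confirming that the formal expression $\sum X_i P_i$ genuinely assembles into a bounded operator and that its inverse has the expected block-diagonal form under the non-orthogonal decomposition, which is handled by the boundedness of the idempotents $P_i$ and $Q_i$ guaranteed by their membership in $\mathcal{A}^{\prime}(\mathbf{T})\subset\mathcal{L}(\mathcal{H})$.
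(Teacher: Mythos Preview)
Your proposal is correct and follows essentially the same approach as the paper: both assemble $X$ as the block-diagonal operator with respect to the two topological direct sum decompositions, verify invertibility, and check $XT_j=T_jX$ on each summand. The paper presents this via block-matrix notation while you write it as $X=\sum_i X_iP_i$ with inverse $\sum_i X_i^{-1}Q_i$, but the content is identical.
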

\begin{proof}
Since $\{P_{i}\}_{i=1}^{n}$ and $\{Q_{i}\}_{i=1}^{n}$ are two unit strongly irreducible decomposition of $\mathbf{T}$, we have that
$$\text{ran\,} P_{1}\dot{+}\text{ran\,}P_{2}\dot{+}\cdots\dot{+}\text{ran\,}P_{n}=\mathcal{H}=\text{ran\,} Q_{1}\dot{+}\text{ran\,}Q_{2}\dot{+}\cdots\dot{+}\text{ran\,}Q_{n}.$$
Therefore,
\begin{equation}
\begin{array}{lll}
\mathbf{T}&=\left(\begin{pmatrix}\begin{matrix}T_{1,1}&&&&\\ &T_{1,2}&&&\text{{\Large 0}}&\\&&T_{1,3}&&\\&&&\ddots&\\&\text{{\Large 0}}&&&T_{1,n}\end{matrix}\end{pmatrix},
\cdots,\begin{pmatrix}\begin{matrix}T_{m,1}&&&&\\ &T_{m,2}&&&\text{{\Large 0}}&\\&&T_{m,3}&&\\&&&\ddots&\\&\text{{\Large 0}}&&&T_{m,n}\end{matrix}\end{pmatrix}\right)\begin{matrix}P_{1}\mathcal{H}\\ P_{2}\mathcal{H}\\
P_{3}\mathcal{H}\\ \vdots\\ P_{n}\mathcal{H}\end{matrix}\\[40pt]
&=\left(\begin{pmatrix}\begin{matrix}\widetilde{T}_{1,1}&&&&\\ &\widetilde{T}_{1,2}&&&\text{{\Large 0}}&\\&&\widetilde{T}_{1,3}&&\\&&&\ddots&\\&\text{{\Large 0}}&&&\widetilde{T}_{1,n}\end{matrix}\end{pmatrix},
\cdots,\begin{pmatrix}\begin{matrix}\widetilde{T}_{m,1}&&&&\\ &\widetilde{T}_{m,2}&&&\text{{\Large 0}}&\\&&\widetilde{T}_{m,3}&&\\&&&\ddots&\\&\text{{\Large 0}}&&&\widetilde{T}_{m,n}\end{matrix}\end{pmatrix}\right)\begin{matrix}Q_{1}\mathcal{H}\\ Q_{2}\mathcal{H}\\
Q_{3}\mathcal{H}\\ \vdots\\ Q_{n}\mathcal{H}\end{matrix},\notag
\end{array}
\end{equation}
where $T_{j,i}=T_{j}|_{P_{i}\mathcal{H}}$ and $\widetilde{T}_{j,i}=T_{j}|_{Q_{i}\mathcal{H}}$ for $1\leq i\leq n$ and $1\leq j\leq m.$ Since
$$X_{i}(T_{j}|_{P_{i}\mathcal{H}})X_{i}^{-1}=T_{j}|_{Q_{i}\mathcal{H}},\quad 1\leq i\leq n, \,1\leq j\leq m,$$
we have that
\begin{equation}
\begin{array}{lll} \quad\begin{pmatrix}\begin{smallmatrix}X_{1}&&&&\\ &X_{2}&&&\text{{\Large 0}}&\\&&X_{3}&&\\&&&\ddots&\\&\text{{\Large 0}}&&&X_{n}\end{smallmatrix}\end{pmatrix} \begin{pmatrix}\begin{smallmatrix}T_{j,1}&&&&\\ &T_{j,2}&&&\text{{\Large 0}}&\\&&T_{j,3}&&\\&&&\ddots&\\&\text{{\Large 0}}&&&T_{j,n}\end{smallmatrix}\end{pmatrix} =\begin{pmatrix}\begin{smallmatrix}\widetilde{T}_{j,1}&&&&\\ &\widetilde{T}_{j,2}&&&\text{{\Large 0}}&\\&&\widetilde{T}_{j,3}&&\\&&&\ddots&\\&\text{{\Large 0}}&&&\widetilde{T}_{j,n}\end{smallmatrix}\end{pmatrix}\begin{pmatrix}\begin{smallmatrix}X_{1}&&&&\\ &X_{2}&&&\text{{\Large 0}}&\\&&X_{3}&&\\&&&\ddots&\\&\text{{\Large 0}}&&&X_{n}\end{smallmatrix}\end{pmatrix}\notag \end{array} \end{equation}
for $1\leq j\leq m.$ Setting $X=X_{1}\dot{+}X_{2}\dot{+}\cdots\dot{+}X_{n}$, where $\dot{+}$ denotes the topological direct sum.
then $X(T_{1},\ldots,T_{m})=(T_{1},\ldots,T_{m})X$ and $X$ is invertible. Thus $X=X_{1}\dot{+}X_{2}\dot{+}\cdots\dot{+}X_{n}\in GL(\mathcal{A}^{\prime}(\mathbf{T})).$
\end{proof}

\begin{lemma}\label{2-5}
Let $\mathbf{T}=(T_{1},\ldots,T_{m})\in\mathcal{L}(\mathcal{H})^{m}$, and let
$$\{P_{1},\ldots,P_{k},P_{k+1},\ldots,P_{n}\}\quad\text{and}\quad \{Q_{1},\ldots,Q_{k},Q_{k+1},\ldots,Q_{n}\}$$
be two sets of idempotent operators in $\mathcal{A}^{\prime}(\mathbf{T})$. If there are $X,Y\in GL(\mathcal{A}^{\prime}(\mathbf{T}))$ and a permutation $\Pi\in S_{n}$ satisfying
\begin{itemize}
  \item [(1)]$XP_{i}X^{-1}=Q_{i},\,\, 1\leq i\leq k;$
  \item [(2)]$Y^{-1}P_{i}Y=Q_{\Pi(i)},\,\, 1\leq i\leq n$.
\end{itemize}
Then for any $Q_{r}\in\{Q_{i}\}_{i=k+1}^{n}$, there is $P_{r^{\prime}}\in\{P_{i}\}_{i=k+1}^{n}$ and invertible operator $Z_{r}$, a finite product of $X$ and $Y$, such that $$Z_{r}Q_{r}Z_{r}^{-1}=P_{r^{\prime}}.$$
More specifically, $\{P_{(k+1)^{\prime}},P_{(k+2)^{\prime}},\ldots,P_{n^{\prime}}\}$ is a rearrangement of $\{P_{i}\}_{i=k+1}^{n}$.
\end{lemma}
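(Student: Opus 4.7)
The plan is to iterate conditions (1) and (2) in a carefully chosen order so as to conjugate $Q_r$ to a $P$-idempotent whose index is still $> k$. First I would rewrite the hypotheses in dual form: condition (2) is equivalent to $Y Q_j Y^{-1} = P_{\sigma(j)}$ for every $j$, where $\sigma := \Pi^{-1}$; and condition (1) is $X P_i X^{-1} = Q_i$ for $i \leq k$. Fix $r \in \{k+1, \ldots, n\}$. Because $\sigma$ is a finite-order permutation whose orbit through $r$ contains $r$ itself (with $r > k$), the number
$$j^*(r) \;:=\; \min\{\, j \geq 1 : \sigma^j(r) > k \,\}$$
is well-defined. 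I would then set $r' := \sigma^{j^*(r)}(r)$, which lies in $\{k+1, \ldots, n\}$ by construction, and $Z_r := Y(XY)^{j^*(r)-1}$, which is visibly a finite product of $X$ and $Y$ (and hence invertible in $\mathcal{A}'(\mathbf{T})$).

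The key identity, which I would establish by induction on $j$, is that whenever $\sigma^i(r) \leq k$ for all $1 \leq i \leq j-1$,
$$Y(XY)^{j-1}\, Q_r\, \bigl(Y(XY)^{j-1}\bigr)^{-1} \;=\; P_{\sigma^j(r)}.$$
The base case $j = 1$ is exactly the rewritten condition (2) applied at index $r$. For the inductive step, assuming the identity at stage $j$ with $\sigma^j(r) \leq k$, I would use condition (1) to obtain $X P_{\sigma^j(r)} X^{-1} = Q_{\sigma^j(r)}$, conjugate both sides of the inductive hypothesis by $X$ to rewrite the right-hand side as $Q_{\sigma^j(r)}$, and then conjugate once more by $Y$ using the rewritten condition (2) to reach $P_{\sigma^{j+1}(r)}$. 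The word on the left updates from $Y(XY)^{j-1}$ to $YX\cdot Y(XY)^{j-1} = Y(XY)^j$, closing the induction. Applying the identity at $j = j^*(r)$ yields $Z_r Q_r Z_r^{-1} = P_{r'}$ as required.

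For the ``more specifically'' part, I would decompose $\{1, \ldots, n\}$ into $\sigma$-orbits. Within each orbit, the assignment $r \mapsto r'$ sends each unmarked index (one in $\{k+1,\ldots,n\}$) to the next unmarked index encountered along the $\sigma$-direction, which is a cyclic permutation of the unmarked elements of that orbit. Since the orbits partition $\{1, \ldots, n\}$, these cyclic permutations assemble into a bijection of $\{k+1, \ldots, n\}$ onto itself, showing that $\{P_{(k+1)'}, \ldots, P_{n'}\}$ is a rearrangement of $\{P_i\}_{i=k+1}^n$. The main subtlety, and the reason a single conjugation by $X$ or $Y$ does not suffice, is that $\Pi$ may interleave marked and unmarked indices within a single orbit; the iterated word $Y(XY)^{j^*(r)-1}$ precisely ``hops over'' the intermediate marked indices by using condition (1) to translate each $P_{\sigma^i(r)}$ back into $Q_{\sigma^i(r)}$ before the next $Y$-step, while the bijectivity then falls out automatically from the orbit decomposition.
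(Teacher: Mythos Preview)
Your proposal is correct and follows essentially the same approach as the paper: both construct $Z_r = Y(XY)^{t-1}$ by iterating condition (2) to pass from $Q$-indices to $P$-indices via $Y$ and using condition (1) to bounce back whenever the resulting $P$-index lands in $\{1,\ldots,k\}$. Your formulation in terms of $\sigma = \Pi^{-1}$ and the stopping time $j^*(r)$ is cleaner than the paper's step-by-step enumeration of $j_1, j_2, j_3, \ldots$, and your orbit-decomposition argument for the bijection is more conceptual than the paper's proof by contradiction, but the underlying mechanism is identical.
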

\begin{proof}
For any $Q_{r}\in\{Q_{i}\}_{i=k+1}^{n}$, it can be seen from Property $(2)$ that there is $P_{j_{1}}\in\{P_{i}\}_{i=1}^{n}$ such that
\begin{equation}\label{2.1}
P_{j_{1}}=YQ_{r}Y^{-1}.
\end{equation}
If $k< j_{1}\leq n $, then set $Z_{r}=Y$ and $P_{r^{\prime}}=P_{j_{1}}$. If $1\leq j_{1}\leq k,$ then $Q_{j_{1}}=XP_{j_{1}}X^{-1}=XYQ_{r}Y^{-1}X^{-1}$ can be obtained from Property $(1)$.
By Property $(2)$, there is $P_{j_{2}}\in\{P_{i}\}_{i=1}^{n}$ satisfying
\begin{equation}\label{2.2}
P_{j_{2}}=YQ_{j_{1}}Y^{-1}=YXYQ_{r}Y^{-1}X^{-1}Y^{-1}.
\end{equation}
It is clear that $j_{1}\neq j_{2}$. Otherwise, from (\ref{2.1}) and (\ref{2.2}),
it is known that
$$Q_{j_{1}}=Y^{-1}P_{j_{2}}Y=Y^{-1}P_{j_{1}}Y=Q_{r},$$  which is obviously contradictory to
$1\leq j_{1}\leq k$ and $k+1\leq r\leq n$.
If $k< j_{2}\leq n $, then set $Z_{r}=YXY$ and $P_{r^{\prime}}=P_{j_{2}}$. If $1\leq j_{2}\leq k,$
by using Properties $(1),(2)$ and (\ref{2.2}), there exists $P_{j_{3}}\in\{P_{i}\}_{i=1}^{n}$ such that
\begin{equation}\label{2.3}
P_{j_{3}}=YQ_{j_{2}}Y^{-1}=YXP_{j_{2}}X^{-1}Y^{-1}=YXYXYQ_{r}Y^{-1}X^{-1}Y^{-1}X^{-1}Y^{-1}.
\end{equation}
Similarly, $j_{3}\notin\{j_{1},j_{2}\}$. Otherwise, if $j_{3}=j_{1}$, from (\ref{2.1}) and (\ref{2.3}), we have that
$$Q_{j_{2}}=Y^{-1}P_{j_{3}}Y=Y^{-1}P_{j_{1}}Y=Q_{r}.$$
If $j_{3}=j_{2}$, we can know $Q_{j_{2}}=Y^{-1}P_{j_{3}}Y=Y^{-1}P_{j_{2}}Y=Q_{j_{1}}$ from (\ref{2.2}) and (\ref{2.3}). Obviously, these are contradictory.
If $k< j_{3}\leq n $, then set $Z_{r}=YXYXY$ and $P_{r^{\prime}}=P_{j_{3}}$. Otherwise, we can continue the above choice process. Since $n$ is a natural number, after $t$ steps, $t\leq k+1$, we must be able to find $P_{j_{t}}\in\{P_{i}\}_{i=k+1}^{n}.$ Setting
$$P_{r^{\prime}}=P_{j_{t}}\quad\text{and}\quad Z_{r}=YXY\cdots XY\quad \text{(X appears}\,\,t-1\,\,\text{times),}$$
then $Z_{r}Q_{r}Z_{r}^{-1}=P_{j_{t}}$.
Assert that $\{P_{(k+1)^{\prime}},P_{(k+2)^{\prime}},\ldots,P_{n^{\prime}}\}$ is a rearrangement of $\{P_{i}\}_{i=k+1}^{n}$, that is, if $r_{1}\neq r_{2}$, $k<r_{1},r_{2}\leq n$, then $j_{t_{1}}\neq j_{t_{2}},k<j_{t_{1}},j_{t_{2}}\leq n$. Otherwise, according to the above choice process, there are
$$Z_{r_{1}}=YXY\cdots XY \,\,\text{(X appears} \,\,t_{1}-1 \,\,\text{times})\quad\text{and}\quad Z_{r_{2}}=YXY\cdots XY \,\,\text{(X appears} \,\,t_{2}-1 \,\,\text{times}),$$
such that
$$Z_{r_{1}}Q_{r_{1}}Z_{r_{1}}^{-1}=P_{j_{t_{1}}}=P_{j_{t_{2}}}=Z_{r_{2}}Q_{r_{2}}Z_{r_{2}}^{-1}.$$ Without losing generality, assume that $r_{1}> r_{2}$,
then $Q_{r_{2}}=Z_{r_{2}}^{-1}Z_{r_{1}}Q_{r_{1}}Z_{r_{1}}^{-1}Z_{r_{2}}\in\{Q_{i}\}_{i=k+1}^{n},$
where $Z_{r_{2}}^{-1}Z_{r_{1}}=XY\cdots XY \,\,\text{(X appears} \,\,t_{1}-t_{2} \,\,\text{times}).$ Setting
$$R:=YXY\cdots XY \quad \text{(X appears} \,\,t_{1}-t_{2}-1 \,\,\text{times}).$$
By this choice process, we know that $RQ_{r_{1}}R^{-1}\in\{P_{i}\}_{i=1}^{k}.$ From Property $(1)$, we get $XRQ_{r_{1}}R^{-1}X^{-1}\in\{Q_{i}\}_{i=1}^{k}$. But
$$XRQ_{r_{1}}R^{-1}X^{-1}=Z_{r_{2}}^{-1}Z_{r_{1}}Q_{r_{1}}Z_{r_{1}}^{-1}Z_{r_{2}}=Q_{r_{2}}\in\{Q_{i}\}_{i=k+1}^{n},$$
obviously, this is contradictory. Similarly, we also know that if $r_{1}<r_{2}$, there is $j_{t_{1}}\neq j_{t_{2}}$.
If $r_{1}=r_{2}$, through this choice process, we have that $j_{t_{1}}= j_{t_{2}}.$ This completes the proof of this Lemma.
\end{proof}

According to the above Lemma, we can get the following Lemma, and its proof process is similar to the above Lemma proof process, so we don't write its detailed proof process.

\begin{lemma}\label{2-6}
Let $\mathbf{T}=(T_{1},\ldots,T_{m})\in\mathcal{L}(\mathcal{H})^{m}$,
$$\{P_{1},\ldots,P_{k_{1}},\ldots,P_{k_{s}},P_{k_{s}+1}\ldots,P_{n}\}\quad\text{and}\quad\{Q_{1},\ldots,Q_{k_{1}},\ldots,Q_{k_{s}},Q_{k_{s}+1}\ldots,Q_{n}\}$$ be two sets of idempotent operators in $\mathcal{A}^{\prime}(\mathbf{T})$. If there exist $\{X_{i}\}_{i=1}^{s},Y\in GL(\mathcal{A}^{\prime}(\mathbf{T}))$ and a permutation $\Pi\in \mathcal{S}_{n}$ satisfying
\begin{itemize}
  \item [(1)]$X_{i}P_{j}X_{i}^{-1}=Q_{j},\,\, k_{i}+1\leq j\leq k_{i+1}, k_{0}=0, 0\leq i\leq s-1;$
  \item [(2)]$Y^{-1}P_{i}Y=Q_{\Pi(i)},\,\, 1\leq i\leq n$.
\end{itemize}
Then for any $Q_{r}\in\{Q_{i}\}_{i=k_{s}+1}^{n}$, there is $P_{r^{\prime}}\in\{P_{i}\}_{i=k_{s}+1}^{n}$ and invertible operator $Z_{r}$, a finite product of $\{X_{i}\}_{i=1}^{s}$ and $Y$, such that
$$Z_{r}Q_{r}Z_{r}^{-1}=P_{r^{\prime}}.$$
More specifically, $\{P_{(k_{s}+1)^{\prime}},P_{(k_{s}+2)^{\prime}},\ldots,P_{n^{\prime}}\}$ is a rearrangement of $\{P_{i}\}_{i=k_{s}+1}^{n}$.
\end{lemma}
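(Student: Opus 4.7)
The plan is to mimic the iterative word-building argument of Lemma \ref{2-5} verbatim, with the sole modification that at each step where a $P$-index lands in one of the covered blocks $[k_i+1,k_{i+1}]$ we choose the \emph{appropriate} conjugator $X_{i+1}$ (or $X_i$, depending on the indexing convention) dictated by which block the index falls into. Conceptually nothing new happens: $Y$ moves $Q$-idempotents to $P$-idempotents via $\Pi^{-1}$, while the $X_i$'s move $P$-idempotents back to $Q$-idempotents within their respective blocks. The only real bookkeeping is to track which $X_i$ to apply at each stage.

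First I would fix $Q_r$ with $k_s+1\le r\le n$ and set $j_1:=\Pi^{-1}(r)$, so by (2) we have $P_{j_1}=YQ_rY^{-1}$. If $j_1>k_s$, we stop with $Z_r=Y$ and $P_{r'}:=P_{j_1}$. Otherwise $j_1$ lies in exactly one block $[k_{i_1}+1,k_{i_1+1}]$; by (1), the conjugator $X_{i_1+1}$ associated to that block satisfies $X_{i_1+1}P_{j_1}X_{i_1+1}^{-1}=Q_{j_1}$, and hence
\[
Q_{j_1}\;=\;X_{i_1+1}YQ_rY^{-1}X_{i_1+1}^{-1}.
\]
Applying (2) once more yields $P_{j_2}=YQ_{j_1}Y^{-1}=YX_{i_1+1}YQ_rY^{-1}X_{i_1+1}^{-1}Y^{-1}$, with $j_2:=\Pi^{-1}(j_1)$. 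Iterating this alternation, step $t$ produces an index $j_t$ and a word $W_t:=YX_{i_{t-1}+1}Y\cdots X_{i_1+1}Y$ in $Y$ and the $X_i$'s such that $P_{j_t}=W_tQ_rW_t^{-1}$. We halt the first time $j_t>k_s$ and set $Z_r:=W_t$, $P_{r'}:=P_{j_t}$.

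The key step is showing the process terminates within at most $k_s+1$ iterations, and this reduces to showing the $j_t$'s are pairwise distinct as long as they stay in $[1,k_s]$. Suppose $j_t=j_{t'}$ for some $t<t'$. Then $P_{j_t}=P_{j_{t'}}$, so $W_tQ_rW_t^{-1}=W_{t'}Q_rW_{t'}^{-1}$, whence $(W_t^{-1}W_{t'})Q_r(W_t^{-1}W_{t'})^{-1}=Q_r$. Unpacking the word $W_t^{-1}W_{t'}$ (which is an alternating product of $Y^{\pm1}$ and $X_{i_\ell+1}^{\pm1}$'s) and repeatedly applying (1) and (2) expresses $Q_r$ as $\Psi Q_{j_s} \Psi^{-1}$ for some $s<t$ with $j_s\le k_s$, forcing $Q_r\in\{Q_1,\ldots,Q_{k_s}\}$ by successive applications of (1) and (2); this contradicts $r>k_s$. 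Since there are only $k_s$ possible values for $j_t$ in the covered range, the procedure must exit within $k_s+1$ steps and produce the required $Z_r$.

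For the rearrangement assertion, I would argue that the map $r\mapsto r'=j_t(r)$ is injective on $\{k_s+1,\ldots,n\}$. If $r_1\neq r_2$ both map to the same $P_{r'}$ then $Z_{r_2}^{-1}Z_{r_1}$ conjugates $Q_{r_1}$ to $Q_{r_2}$, and by examining the internal letters of this word one shows (using (1) applied block-by-block) that $Q_{r_2}$ lies in $\{Q_1,\ldots,Q_{k_s}\}$, contradicting $r_2>k_s$. This is the exact generalization of the final paragraph of the proof of Lemma \ref{2-5}. The main obstacle, really the only non-routine bookkeeping, is making this block-indexed contradiction argument airtight when several distinct $X_i$'s appear in the words $Z_r$; once one is careful about which block each intermediate $j_t$ sits in, the argument runs in parallel with Lemma \ref{2-5}, which is why the author remarks that the detailed proof may be omitted.
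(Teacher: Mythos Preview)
Your proposal is correct and matches the paper's approach exactly: the paper in fact omits the detailed proof entirely, stating only that it is similar to the proof of Lemma~\ref{2-5}, which is precisely the template you follow with the single modification of selecting the block-appropriate $X_i$ at each stage. Your handling of termination and injectivity mirrors the corresponding arguments in Lemma~\ref{2-5}, so nothing further is needed.
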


\begin{lemma}\label{2-7}
Let $\mathbf{T}=(T_{1},\ldots,T_{m})\in\mathcal{L}(\mathcal{H})^{m}$,
$$\{P_{1},\ldots,P_{k},P_{k+1},\ldots,P_{n}\}\quad \text{and}\quad \{Q_{1},\ldots,Q_{k},Q_{k+1},\ldots,Q_{n}\}$$
be two
unit decompositions of $\mathbf{T}$. If the following properties are satisfied.
\begin{itemize}
  \item [(1)]There exists an $X_{j}\in GL(\mathcal{L}(P_{j}\mathcal{H},Q_{j}\mathcal{H}))$ such that
  $$X_{j}(T_{i}|_{P_{j}\mathcal{H}})X_{j}^{-1} = T_{i}|_{Q_{j}\mathcal{H}},\quad 1\leq j\leq k,1\leq i\leq m.$$
  \item [(2)]There are $Y\in GL(\mathcal{A}^{\prime}(\mathbf{T}))$ and a permutation $\Pi\in \mathcal{S}_{n}$ satisfying $$Y^{-1}P_{j}Y=Q_{\Pi(j)},\quad 1\leq j\leq n.$$
\end{itemize}
Then for any $Q_{r}\in\{Q_{i}\}_{i=k+1}^{n}$, there is $P_{r^{\prime}}\in\{P_{i}\}_{i=k+1}^{n}$ and $Z_{r}\in GL(\mathcal{L}(Q_{r}\mathcal{H},P_{r^{\prime}}\mathcal{H}))$ such that $$Z_{r}(T_{i}|_{Q_{r}\mathcal{H}})Z_{r}^{-1} = T_{i}|_{P_{r^{\prime}}\mathcal{H}},\quad 1\leq i\leq m.$$
Specifically, if $r_{1}\neq r_{2}$, then $r_{1}^{\prime}\neq r_{2}^{\prime}$.
\end{lemma}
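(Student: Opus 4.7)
The plan is to mimic the iterative procedure in the proof of Lemma \ref{2-5}, but to carry it out at the level of restriction maps between the individual blocks $P_j\mathcal{H}$ and $Q_j\mathcal{H}$, rather than at the level of global conjugations inside $GL(\mathcal{A}^{\prime}(\mathbf{T}))$. The preparatory observation is that Property $(2)$ still supplies intertwining bijections on each block: since $Y^{-1}P_jY = Q_{\Pi(j)}$, one has $Y(Q_{\Pi(j)}\mathcal{H}) = P_j\mathcal{H}$, and because $Y \in GL(\mathcal{A}^{\prime}(\mathbf{T}))$ is invertible on all of $\mathcal{H}$ and commutes with each $T_i$, the restriction $Y|_{Q_{\Pi(j)}\mathcal{H}}$ is an invertible bounded operator from $Q_{\Pi(j)}\mathcal{H}$ onto $P_j\mathcal{H}$ that intertwines $T_i|_{Q_{\Pi(j)}\mathcal{H}}$ with $T_i|_{P_j\mathcal{H}}$ for every $i$. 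Together with the $X_j$'s from Property $(1)$ for $1 \leq j \leq k$, these are the building blocks of the chain.

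Fix $r$ with $k+1 \leq r \leq n$ and set $j_t := \Pi^{-t}(r)$ for $t \geq 1$. Since $\Pi$ is a permutation of the finite set $\{1,\ldots,n\}$, the $\Pi^{-1}$-orbit of $r$ is a finite cycle returning to $r$, hence there is a smallest $t(r) \geq 1$ with $j_{t(r)} \geq k+1$. Set $r^\prime := j_{t(r)}$ and
\begin{equation*}
Z_r := Y|_{Q_{j_{t(r)-1}}\mathcal{H}}\,X_{j_{t(r)-1}}\,\cdots\,Y|_{Q_{j_1}\mathcal{H}}\,X_{j_1}\,Y|_{Q_r\mathcal{H}},
\end{equation*}
with the convention $Z_r := Y|_{Q_r\mathcal{H}}$ in the degenerate case $t(r)=1$. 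By the choice of $t(r)$, each intermediate index $j_s$ with $1 \leq s < t(r)$ lies in $\{1,\ldots,k\}$, so the corresponding $X_{j_s}$ from Property $(1)$ is available; each factor then gives an invertible intertwiner between $\mathbf{T}$-restrictions on consecutive blocks (alternately $Y$ from a $Q$-block to a $P$-block and $X$ from a $P$-block to a $Q$-block), so the composition satisfies $Z_r \in GL(\mathcal{L}(Q_r\mathcal{H}, P_{r^\prime}\mathcal{H}))$ and $Z_r(T_i|_{Q_r\mathcal{H}})Z_r^{-1} = T_i|_{P_{r^\prime}\mathcal{H}}$ for every $i$. This settles existence.

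The main obstacle is the injectivity assertion $r_1 \neq r_2 \Rightarrow r_1^\prime \neq r_2^\prime$. Assume $r_1^\prime = r_2^\prime$, i.e., $\Pi^{-t(r_1)}(r_1) = \Pi^{-t(r_2)}(r_2)$, and suppose without loss of generality that $t(r_1) \geq t(r_2)$. Then $r_2 = \Pi^{-(t(r_1)-t(r_2))}(r_1)$, so $r_2$ coincides with the element $j_{t(r_1)-t(r_2)}$ in the iteration starting from $r_1$. If $t(r_1) > t(r_2)$, then $0 < t(r_1)-t(r_2) < t(r_1)$, and the minimality of $t(r_1)$ forces $j_{t(r_1)-t(r_2)} \in \{1,\ldots,k\}$, contradicting $r_2 \in \{k+1,\ldots,n\}$. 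Therefore $t(r_1) = t(r_2)$ and $r_1 = r_2$, completing the argument.
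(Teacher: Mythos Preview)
Your proof is correct and follows essentially the same iterative chain construction as the paper's proof: alternately applying restrictions of $Y$ (to pass from a $Q$-block to a $P$-block) and the $X_{j}$'s (to pass from a $P$-block back to a $Q$-block with the same index) until the process lands in $\{P_i\}_{i=k+1}^{n}$. The paper builds this chain step by step, proving at each stage that the new index $j_s$ is distinct from the previous ones, and then argues injectivity by a somewhat involved contradiction comparing the two chains $Z_{r_1}$ and $Z_{r_2}$ factor by factor; your observation that $j_t = \Pi^{-t}(r)$ and your use of the orbit/minimality formulation streamline both parts --- termination follows from finiteness of the $\Pi$-orbit through $r$, and injectivity reduces to the one-line minimality argument $r_2 = \Pi^{-(t(r_1)-t(r_2))}(r_1) \in \{1,\ldots,k\}$ whenever $t(r_1) > t(r_2)$.
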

\begin{proof}
For any $Q_{r}\in\{Q_{i}\}_{i=k+1}^{n}$, from Property $(2)$, there exists $P_{j_{1}}\in\{P_{i}\}_{i=1}^{n}$ such that
\begin{equation}\label{3.1}
P_{j_{1}}=YQ_{r}Y^{-1}.
\end{equation}
If $k< j_{1}\leq n $, setting $Z_{r}:=Y|_{Q_{r}\mathcal{H}}$ and $P_{r^{\prime}}:=P_{j_{1}}$, then
$Z_{r}(T_{i}|_{Q_{r}\mathcal{H}})Z_{r}^{-1} = T_{i}|_{P_{r^{\prime}}\mathcal{H}}$ for $1\leq i\leq m.$ Otherwise, if $1\leq j_{1}\leq k,$ from Property $(1)$, $$X_{j_{1}}T_{i}|_{(YQ_{r}Y^{-1})\mathcal{H}}X_{j_{1}}^{-1}=X_{j_{1}}T_{i}|_{P_{j_{1}}\mathcal{H}}X_{j_{1}}^{-1}=T_{i}|_{Q_{j_{1}}\mathcal{H}},\quad 1\leq i\leq m.$$
By Property $(2)$, there is $P_{j_{2}}\in\{P_{i}\}_{i=1}^{n}$ satisfying
\begin{equation}\label{3.2}
P_{j_{2}}=YQ_{j_{1}}Y^{-1}.
\end{equation}
It is clear that $j_{1}\neq j_{2}$. Otherwise, from (\ref{3.1}) and (\ref{3.2}),
$Q_{j_{1}}=Y^{-1}P_{j_{2}}Y=Y^{-1}P_{j_{1}}Y=Q_{r}$, which is contradictory to
$1\leq j_{1}\leq k$ and $k+1\leq r\leq n$.
If $k< j_{2}\leq n $, setting $$Z_{r}:=Y|_{Q_{j_{1}}\mathcal{H}}X_{j_{1}}Y|_{Q_{r}\mathcal{H}}\quad \text{and}\quad P_{r^{\prime}}:=P_{j_{2}},$$
then $Z_{r}(T_{i}|_{Q_{r}\mathcal{H}})Z_{r}^{-1} = T_{i}|_{P_{r^{\prime}}\mathcal{H}}$ for $1\leq i\leq m.$ If $1\leq j_{2}\leq k,$
by Properties $(1)$, we have that
$$X_{j_{2}}T_{i}|_{P_{j_{2}}\mathcal{H}}X_{j_{2}}^{-1}=T_{i}|_{Q_{j_{2}}\mathcal{H}},\quad 1\leq i\leq m.$$
Using Properties $(2)$ again, there is $P_{j_{3}}\in\{P_{i}\}_{i=1}^{n}$ such that
\begin{equation}\label{3.3}
P_{j_{3}}=YQ_{j_{2}}Y^{-1}.
\end{equation}
Similarly, $j_{3}\notin\{j_{1},j_{2}\}$. Otherwise, if $j_{3}=j_{1}$, we know that $Q_{j_{2}}=Y^{-1}P_{j_{3}}Y=Y^{-1}P_{j_{1}}Y=Q_{r}$ from (\ref{3.1}) and (\ref{3.3}),
if $j_{3}=j_{2}$, we know that $Q_{j_{2}}=Y^{-1}P_{j_{3}}Y=Y^{-1}P_{j_{2}}Y=Q_{j_{1}}$ from (\ref{3.2}) and (\ref{3.3}). Obviously, these are contradictory.
If $k< j_{3}\leq n $, then set
$$Z_{r}:=Y|_{Q_{j_{2}}\mathcal{H}}X_{j_{2}}Y|_{Q_{j_{1}}\mathcal{H}}X_{j_{1}}Y|_{Q_{r}\mathcal{H}}\quad\text{and}\quad P_{r^{\prime}}:=P_{j_{3}}.$$ Otherwise, we can continue the above choice process. Since $n$ is a natural number, after $t$ steps, $t\leq k+1$, we can find $P_{j_{t}}\in\{P_{i}\}_{i=k+1}^{n}.$ Setting
$$P_{r^{\prime}}:=P_{j_{t}}\quad\text{and}\quad Z_{r}:=Y|_{Q_{j_{t-1}}\mathcal{H}}X_{j_{t-1}}Y|_{Q_{j_{t-2}}\mathcal{H}}X_{j_{t-2}}\cdots Y|_{Q_{j_{1}}\mathcal{H}}X_{j_{1}}Y|_{Q_{r}\mathcal{H}}\quad \text{(Y appears}\,\,t\,\,\text{times),}$$
then $Z_{r}T_{i}|_{Q_{r}\mathcal{H}}Z_{r}^{-1}=T_{i}|_{P_{j_{t}}\mathcal{H}}$ for $1\leq i\leq m$.

Now let's prove that if $r_{1}\neq r_{2}$, $k<r_{1},r_{2}\leq n$, then $j_{t_{1}}\neq j_{t_{2}},k<j_{t_{1}},j_{t_{2}}\leq n$.

Through the proof process above. Firstly, we assume that there are
$r_{1}\neq r_{2}$, $k<r_{1}, r_{2}\leq n$, such that $j_{1}= j_{2},k<j_{1},j_{2}\leq n$, and
$$Z_{r_{1}}(T_{i}|_{Q_{r_{1}}\mathcal{H}})Z_{r_{1}}^{-1}=T_{i}|_{P_{j_{1}\mathcal{H}}}=T_{i}|_{P_{j_{2}\mathcal{H}}}=Z_{r_{2}}(T_{i}|_{Q_{r_{2}}\mathcal{H}})Z_{r_{2}}^{-1},\quad 1\leq i\leq m,$$
where $Z_{r_{1}}= Y|_{Q_{r_{1}}\mathcal{H}}, Z_{r_{2}}=Y|_{Q_{r_{2}}\mathcal{H}}, P_{j_{1}}=YQ_{r_{1}}Y^{-1},$ and $P_{j_{2}}=YQ_{r_{2}}Y^{-1}$
Thus,
\begin{equation}\label{eq1}
T_{i}Q_{r_{2}}=T_{i}|_{Q_{r_{2}}\mathcal{H}}=Z_{r_{2}}^{-1}T_{i}|_{P_{j_{1}\mathcal{H}}}Z_{r_{2}}
=\left(Y|_{Q_{r_{2}}\mathcal{H}}\right)^{-1}T_{i}|_{P_{j_{1}\mathcal{H}}}Y|_{Q_{r_{2}}\mathcal{H}}
=Y^{-1}|_{P_{j_{2}}\mathcal{H}}T_{i}|_{P_{j_{1}\mathcal{H}}}Y|_{Q_{r_{2}}\mathcal{H}}.
\end{equation}
Since $\{P_{i}\}_{i=1}^{n}$ is a unit decomposition of $\mathbf{T}$,
(\ref{eq1}) shows that $j_{1}= j_{2}$ by $P_{j_{1}}P_{j_{2}}=
\left\{
\begin{array}{lc}
1 & j_{1}=j_{2}\\
0 & \text{else}\\
\end{array}
\right.$.
There is no loss of generality in assuming that there are $r_{1}\neq r_{2}$, $k<r_{1}, r_{2}\leq n$, such that $j_{t_{1}}= j_{t_{2}},k<j_{t_{1}},j_{t_{2}}\leq n$, and then
$$Z_{r_{1}}(T_{i}|_{Q_{r_{1}}\mathcal{H}})Z_{r_{1}}^{-1}=T_{i}|_{P_{j_{t_{1}}\mathcal{H}}}=T_{i}|_{P_{j_{t_{2}}\mathcal{H}}}=Z_{r_{2}}(T_{i}|_{Q_{r_{2}}\mathcal{H}})Z_{r_{2}}^{-1},\quad 1\leq i\leq m,$$
where $$Z_{r_{1}}=Y|_{Q_{j_{t-1}}\mathcal{H}}X_{j_{t-1}}Y|_{Q_{j_{t-2}}\mathcal{H}}X_{j_{t-2}}\cdots Y|_{Q_{j_{l}}\mathcal{H}}X_{j_{l}} Y|_{Q_{j_{l-1}}\mathcal{H}}X_{j_{l-1}} \cdots Y|_{Q_{j_{1}}\mathcal{H}}X_{j_{1}}Y|_{Q_{r_{1}}\mathcal{H}},$$
$$Z_{r_{2}}=Y|_{Q_{j_{t-1}}\mathcal{H}}X_{j_{t-1}}Y|_{Q_{j_{t-2}}\mathcal{H}}X_{j_{t-2}}\cdots Y|_{Q_{j_{l}}\mathcal{H}}X_{j_{l}}Y|_{Q_{r_{2}}\mathcal{H}}\quad
\text{and}\quad Y|_{Q_{r_{2}}\mathcal{H}}=Y|_{Q_{j_{l-1}}}.$$
Therefore,
$$\begin{array}{lll}
&&T_{i}|_{Q_{r_{2}}\mathcal{H}}\\[5pt]
&=&Z_{r_{2}}^{-1}Z_{r_{1}}(T_{i}|_{Q_{r_{1}}\mathcal{H}})Z_{r_{1}}^{-1}Z_{r_{2}}\\[5pt]
&=&(X_{j_{l-1}} Y|_{Q_{j_{l-2}}\mathcal{H}}\cdots Y|_{Q_{j_{1}}\mathcal{H}}X_{j_{1}}Y|_{Q_{r_{1}}\mathcal{H}})(T_{i}|_{Q_{r_{1}}\mathcal{H}})(X_{j_{l-1}}Y|_{Q_{j_{l-2}}\mathcal{H}}\cdots Y|_{Q_{j_{1}}\mathcal{H}}X_{j_{1}}Y|_{Q_{r_{1}}\mathcal{H}})^{-1}.
\end{array}$$
Setting $Z_{r}:= Y|_{Q_{j_{l-2}}\mathcal{H}}\cdots Y|_{Q_{j_{1}}\mathcal{H}}X_{j_{1}}Y|_{Q_{r_{1}}\mathcal{H}}.$
With the above choice process, we know that
$$Z_{r}(T_{i}|_{Q_{r_{1}}\mathcal{H}})Z_{r}^{-1}=T_{i}|_{P_{j_{l-1}}\mathcal{H}},\quad 1\leq i\leq m,$$
where $j_{l-1}\in\{1,2,\ldots,k\}$.
From Property $(1)$, we have that
$$T_{i}|_{Q_{r_{2}}\mathcal{H}}=Z_{r_{2}}^{-1}Z_{r_{1}}(T_{i}|_{Q_{r_{1}}\mathcal{H}})Z_{r_{1}}^{-1}Z_{r_{2}}=
X_{j_{l-1}}(T_{i}|_{P_{j_{l-1}}\mathcal{H}})(X_{j_{l-1}})^{-1}=T_{i}|_{Q_{j_{l-1}}\mathcal{H}},\quad 1\leq i\leq m.$$
Then $r_{2}=j_{l-1},$
but $1\leq j_{l-1}\leq k< r_{2}\leq n$, this is impossible, so $j_{t_{1}}\neq j_{t_{2}}$ if $r_{1}\neq r_{2}$.
\end{proof}

\begin{lemma}\label{2-8}
Let $\mathbf{T}=(T_{1},\ldots,T_{m})\in\mathcal{L}(\mathcal{H})^{m}$ has unique finite strongly irreducible decomposition up to similarity. Then for
any idempotent operator $P\in\mathcal{A}^{\prime}(\mathbf{T})$, $\mathbf{T}|_{P\mathcal{H}}=(T_{1}|_{P\mathcal{H}},\ldots,T_{m}|_{P\mathcal{H}})$ has unique finite strongly irreducible decomposition up to similarity.
\end{lemma}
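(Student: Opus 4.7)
The plan is to lift the problem for $\mathbf{T}|_{P\mathcal{H}}$ back to $\mathbf{T}$ via the topological decomposition $\mathcal{H}=P\mathcal{H}\dot{+}(I-P)\mathcal{H}$. Since $P\in\mathcal{A}'(\mathbf{T})$, each $T_j$ is block-diagonal in this decomposition, so for any $E\in\mathcal{A}'(\mathbf{T}|_{P\mathcal{H}})$ the extension $\widetilde{E}:=E\,\dot{+}\,0$ lies in $\mathcal{A}'(\mathbf{T})$, remains idempotent when $E$ is, and satisfies $\widetilde{E}\mathcal{H}=E\mathcal{H}$ and $\mathbf{T}|_{\widetilde{E}\mathcal{H}}=\mathbf{T}|_{E\mathcal{H}}$. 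The existence half of the conclusion is then immediate: for any idempotent $E\in\mathcal{A}'(\mathbf{T}|_{P\mathcal{H}})$, the hypothesis on $\mathbf{T}$ supplies a unit finite strongly irreducible decomposition of $\mathbf{T}|_{\widetilde{E}\mathcal{H}}$, and this is automatically one for $\mathbf{T}|_{E\mathcal{H}}$.

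For uniqueness, fix two unit finite strongly irreducible decompositions $\{P_i\}_{i=1}^n$ and $\{Q_j\}_{j=1}^k$ of $\mathbf{T}|_{P\mathcal{H}}$ and, using that $I-P\in\mathcal{A}'(\mathbf{T})$ is idempotent, a unit finite strongly irreducible decomposition $\{R_s\}_{s=1}^l$ of $\mathbf{T}|_{(I-P)\mathcal{H}}$. Lifting gives two unit finite strongly irreducible decompositions of $\mathbf{T}$,
\begin{equation*}
\mathcal{E}=\{\widetilde{R}_1,\ldots,\widetilde{R}_l,\widetilde{P}_1,\ldots,\widetilde{P}_n\},\qquad \mathcal{F}=\{\widetilde{R}_1,\ldots,\widetilde{R}_l,\widetilde{Q}_1,\ldots,\widetilde{Q}_k\},
\end{equation*}
so the uniqueness hypothesis on $\mathbf{T}$ forces $n=k$ and produces an $X\in GL(\mathcal{A}'(\mathbf{T}))$ together with a permutation of $\{1,\ldots,n+l\}$ matching $\mathcal{E}$ to $\mathcal{F}$.

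To convert this into a similarity inside $\mathcal{A}'(\mathbf{T}|_{P\mathcal{H}})$, I apply Lemma \ref{2-7} to $\mathcal{E}$ and $\mathcal{F}$: its hypothesis (1) on the shared prefix $\widetilde{R}_1,\ldots,\widetilde{R}_l$ is satisfied via $X_s=\mathrm{id}_{\widetilde{R}_s\mathcal{H}}$, and $Y:=X^{-1}$ supplies hypothesis (2). The conclusion, combined with the final injectivity clause of Lemma \ref{2-7}, yields a permutation $\pi\in\mathcal{S}_n$ and invertible maps $Z_i\in GL(\mathcal{L}(Q_i\mathcal{H},P_{\pi(i)}\mathcal{H}))$ with $Z_iT_j|_{Q_i\mathcal{H}}Z_i^{-1}=T_j|_{P_{\pi(i)}\mathcal{H}}$ for all $i,j$. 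Because $Q_i\mathcal{H},P_{\pi(i)}\mathcal{H}\subseteq P\mathcal{H}$ and the relevant restrictions of $T_j$ coincide with those of $\mathbf{T}|_{P\mathcal{H}}$, Lemma \ref{2-4} applied to $\mathbf{T}|_{P\mathcal{H}}$ with the unit strongly irreducible decompositions $\{Q_i\}_{i=1}^n$ and $\{P_{\pi(i)}\}_{i=1}^n$ assembles $Z:=Z_1\,\dot{+}\,\cdots\,\dot{+}\,Z_n$ into an element of $GL(\mathcal{A}'(\mathbf{T}|_{P\mathcal{H}}))$ satisfying $ZQ_iZ^{-1}=P_{\pi(i)}$; taking $X':=Z^{-1}$ and $\Pi:=\pi^{-1}$ delivers the required $X'P_j(X')^{-1}=Q_{\Pi(j)}$ inside $\mathcal{A}'(\mathbf{T}|_{P\mathcal{H}})$.

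The main obstacle is the middle bookkeeping: one must cleanly align the shared $\widetilde{R}_s$ block with the ``first $k$'' matched portion required by Lemma \ref{2-7}, and then rely on its final injectivity statement to rule out the possibility that some $\widetilde{Q}_i$ is carried back to one of the $\widetilde{R}_s$'s, thereby ensuring that the produced indices give an honest permutation of $\{1,\ldots,n\}$ rather than only an injection into the combined family.
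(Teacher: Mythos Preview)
Your proposal is correct and follows essentially the same approach as the paper: lift both decompositions of $\mathbf{T}|_{P\mathcal{H}}$ to $\mathbf{T}$ by appending a fixed strongly irreducible decomposition of $\mathbf{T}|_{(I-P)\mathcal{H}}$, invoke uniqueness for $\mathbf{T}$, apply Lemma~\ref{2-7} with identities on the shared block, and reassemble via Lemma~\ref{2-4}. Your indexing is in fact cleaner, since you place the shared $\widetilde{R}_s$ block at the front so that it directly matches the ``first $k$'' slot in Lemma~\ref{2-7}, and you apply Lemma~\ref{2-4} directly to $\mathbf{T}|_{P\mathcal{H}}$ rather than extending by identities and restricting as the paper does.
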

\begin{proof}
Since $\mathbf{T}=(T_{1},\ldots,T_{m})\in\mathcal{L}(\mathcal{H})^{m}$ has unique finite strongly irreducible decomposition up to similarity, we know that $\mathbf{T}|_{P\mathcal{H}}=(T_{1}|_{P\mathcal{H}},\ldots,T_{m}|_{P\mathcal{H}})$ has finite strongly irreducible decomposition.
Let $\{P_{i}\}_{i=1}^{k}$ and $\{Q_{i}\}_{i=1}^{\widetilde{k}}$ be two unit finite strongly irreducible decomposition of $\mathbf{T}|_{P\mathcal{H}}$, and let $\{P_{i}\}_{i=k+1}^{n}$ be a unit finite strongly irreducible decomposition of $\mathbf{T}|_{(I-P)\mathcal{H}}$. Then $\{\{P_{i}\}_{i=1}^{k},\{P_{i}\}_{i=k+1}^{n}\}$ and $\{\{Q_{i}\}_{i=1}^{\widetilde{k}},\{P_{i}\}_{i=k+1}^{n}\}$ are two unit finite strongly irreducible decomposition of $\mathbf{T}$. From the uniqueness of strongly irreducible decomposition of $\mathbf{T}$, we know that $k=\widetilde{k}$ and there is an operator $Y\in GL(\mathcal{A}^{\prime}(\mathbf{T}))$ such that $$\{Y^{-1}P_{i}Y\}_{i=1}^{n}=\{\{Q_{i}\}_{i=1}^{k},\{P_{i}\}_{i=k+1}^{n}\}.$$
Therefore, for any $P_{i}\in\{P_{i}\}_{i=k+1}^{n}$, there is $I|_{P_{i}\mathcal{H}}\in GL(\mathcal{L}(P_{i}\mathcal{H},P_{i}\mathcal{H}))$ satisfies
$$I|_{P_{i}\mathcal{H}}(T_{j}|_{P_{i}\mathcal{H}})I|_{P_{i}\mathcal{H}}^{-1} = T_{j}|_{P_{i}\mathcal{H}},\quad 1\leq j\leq m, k+1\leq i\leq n.$$
And there are $Y\in GL(\mathcal{A}^{\prime}(\mathbf{T}))$ and a permutation $\Pi\in \mathcal{S}_{n}$ such that $$Y^{-1}P_{i}Y=Q_{\Pi(i)},\quad 1\leq i\leq n,$$ where $Q_{j}=P_{j}$ for $k+1\leq j\leq n$.
By Lemma \ref{2-7}, there is a permutation $\widetilde{\Pi}\in \mathcal{S}_{k}$ and $Z_{i}\in GL(\mathcal{L}(Q_{i}\mathcal{H},P_{\widetilde{\Pi}(i)}\mathcal{H})), 1\leq i\leq k,$ such that
\begin{equation}\label{2.4}
Z_{i}(T_{j}|_{Q_{i}\mathcal{H}})Z_{i}^{-1} = T_{j}|_{P_{\widetilde{\Pi}(i)}\mathcal{H}},\quad 1\leq i\leq k, 1\leq j\leq m.
\end{equation}
Setting $Z_{i}:=I|_{P_{i}\mathcal{H}}, k+1\leq i\leq n$ and $Z:=Z_{1}\dot{+}Z_{2}\dot{+}\cdots\dot{+}Z_{n}$,
where $\dot{+}$ denotes the topological direct sum.
From Lemma \ref{2-4}, we know that $$Z\in GL(\mathcal{A}^{\prime}(\mathbf{T}))\quad\text{and}\quad Z|_{P\mathcal{H}}\in GL\left(\mathcal{A}^{\prime}(\mathbf{T}|_{P\mathcal{H}})\right)=GL\left(\bigcap_{i=1}^{m}\mathcal{A}^{\prime}(T_{i}|_{P\mathcal{H}})\right).$$
Therefore, by (\ref{2.4}), we obtain that
$Z|_{P\mathcal{H}}Q_{i}(Z|_{P\mathcal{H}})^{-1} =P_{\widetilde{\Pi}(i)\mathcal{H}}, 1\leq i\leq k$, that is, for any idempotent operator $P$ in $\mathcal{A}^{\prime}(\mathbf{T})$, $\mathbf{T}|_{P\mathcal{H}}$ has unique strongly irreducible decomposition up to similarity.
\end{proof}

Using Lemma \ref{2-8}, we obtain the following result, which is stronger than Lemma \ref{2-3}.

\begin{lemma}\label{2-9}
Let $\mathbf{T}=(T_{1},\ldots,T_{m})\in\mathcal{L}(\mathcal{H})^{m}$ has unique finite strongly irreducible decomposition up to similarity.
Then for any idempotents $P$ and $Q$ in $\mathcal{A}^{\prime}(\mathbf{T})$, $P\sim_{s}(\mathcal{A}^{\prime}(\mathbf{T}))Q$ if and only if $\mathbf{T}|_{P\mathcal{H}}\sim_{s}\mathbf{T}|_{Q\mathcal{H}}$.
\end{lemma}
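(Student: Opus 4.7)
The reverse implication $P\sim_{s}(\mathcal{A}^{\prime}(\mathbf{T}))Q \Rightarrow \mathbf{T}|_{P\mathcal{H}}\sim_{s}\mathbf{T}|_{Q\mathcal{H}}$ is precisely Lemma~\ref{2-3} and needs no extra hypothesis, so I concentrate on the forward direction. The plan is: given an invertible intertwiner $X:P\mathcal{H}\to Q\mathcal{H}$, refine $X$ into a blockwise intertwiner $W\in GL(\mathcal{A}^{\prime}(\mathbf{T}))$ that carries $P\mathcal{H}$ onto $Q\mathcal{H}$ and $(I-P)\mathcal{H}$ onto $(I-Q)\mathcal{H}$, which forces $WPW^{-1}=Q$.

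First, by Lemma~\ref{2-8}, both $\mathbf{T}|_{P\mathcal{H}}$ and $\mathbf{T}|_{Q\mathcal{H}}$ have unique unit SI decompositions. Fix $\{P_{i}\}_{i=1}^{k}$ for the former and $\{Q_{i}\}_{i=1}^{k^{\prime}}$ for the latter. Since $A\mapsto XAX^{-1}$ is an isomorphism from $\mathcal{A}^{\prime}(\mathbf{T}|_{P\mathcal{H}})$ onto $\mathcal{A}^{\prime}(\mathbf{T}|_{Q\mathcal{H}})$, Lemma~\ref{2-2} shows $\{XP_{i}X^{-1}\}_{i=1}^{k}$ is also a unit SI decomposition of $\mathbf{T}|_{Q\mathcal{H}}$. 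Applying the uniqueness for $\mathbf{T}|_{Q\mathcal{H}}$ then forces $k=k^{\prime}$ and produces $Y\in GL(\mathcal{A}^{\prime}(\mathbf{T}|_{Q\mathcal{H}}))$ with a permutation $\pi\in\mathcal{S}_{k}$ satisfying $YXP_{i}X^{-1}Y^{-1}=Q_{\pi(i)}$. Setting $A:=YX$, each restriction $A|_{P_{i}\mathcal{H}}:P_{i}\mathcal{H}\to Q_{\pi(i)}\mathcal{H}$ is an invertible intertwiner of the tuple, which will supply hypothesis~(1) of Lemma~\ref{2-7}.

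Next, fix unit SI decompositions $\{R_{j}\}_{j=1}^{l}$ of $\mathbf{T}|_{(I-P)\mathcal{H}}$ and $\{S_{j}\}_{j=1}^{l^{\prime}}$ of $\mathbf{T}|_{(I-Q)\mathcal{H}}$, which exist because $\mathbf{T}$ has finite SI decomposition and $I-P,I-Q\in\mathcal{A}^{\prime}(\mathbf{T})$. Then $\{P_{1},\ldots,P_{k},R_{1},\ldots,R_{l}\}$ and $\{Q_{\pi(1)},\ldots,Q_{\pi(k)},S_{1},\ldots,S_{l^{\prime}}\}$ are two unit SI decompositions of $\mathbf{T}$ itself. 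The uniqueness hypothesis on $\mathbf{T}$ forces $l=l^{\prime}$ and yields some $Y_{0}\in GL(\mathcal{A}^{\prime}(\mathbf{T}))$ together with a permutation of all $k+l$ indices, providing hypothesis~(2) of Lemma~\ref{2-7}. Invoking Lemma~\ref{2-7} produces a bijection $\psi\in\mathcal{S}_{l}$ and invertible intertwiners $Z_{r}:S_{r}\mathcal{H}\to R_{\psi(r)}\mathcal{H}$ of the tuple, for $1\leq r\leq l$.

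Finally, reindex the second SI decomposition as $\{Q_{\pi(1)},\ldots,Q_{\pi(k)},S_{\psi^{-1}(1)},\ldots,S_{\psi^{-1}(l)}\}$ and apply Lemma~\ref{2-4} to the block intertwiners $A|_{P_{i}\mathcal{H}}$ for $1\leq i\leq k$ and $Z_{\psi^{-1}(j)}^{-1}:R_{j}\mathcal{H}\to S_{\psi^{-1}(j)}\mathcal{H}$ for $1\leq j\leq l$. This delivers a single $W\in GL(\mathcal{A}^{\prime}(\mathbf{T}))$ whose restriction carries $P\mathcal{H}$ onto $Q\mathcal{H}$ and $(I-P)\mathcal{H}$ onto $(I-Q)\mathcal{H}$; hence $WP=QW$ and $P\sim_{s}(\mathcal{A}^{\prime}(\mathbf{T}))Q$. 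The principal obstacle is exactly the one Lemma~\ref{2-7} is designed to remove: uniqueness of SI decomposition for $\mathbf{T}$ only asserts that \emph{some} permutation intertwines the two full SI decompositions, and a priori this permutation could mix a summand of $P\mathcal{H}$ with a summand of $(I-Q)\mathcal{H}$. Pinning down the first $k$ blocks through the explicit similarity $A=YX$ (which in turn rests on Lemma~\ref{2-8}) is precisely what confines the remaining $l$ blocks to pair up among themselves.
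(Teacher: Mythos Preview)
Your proof is correct and follows essentially the same route as the paper: transport an SI decomposition of $\mathbf{T}|_{P\mathcal{H}}$ to $\mathbf{T}|_{Q\mathcal{H}}$ via $X$ (Lemma~\ref{2-2}), form two global unit SI decompositions of $\mathbf{T}$, invoke the global uniqueness hypothesis together with Lemma~\ref{2-7} to match the complementary blocks, and assemble the resulting block intertwiners into a single element of $GL(\mathcal{A}'(\mathbf{T}))$ via Lemma~\ref{2-4}. The only difference is cosmetic: the paper works directly with $\{XP_iX^{-1}\}$ as its SI decomposition of $\mathbf{T}|_{Q\mathcal{H}}$, whereas you first align it with a separately chosen $\{Q_i\}$ using Lemma~\ref{2-8}---an unnecessary but harmless extra step.
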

\begin{proof}
From Lemma \ref{2-3}, we only need to prove the sufficiency of this result.
Since $\mathbf{T}=(T_{1},\ldots,T_{m})\in\mathcal{L}(\mathcal{H})^{m}$ has unique finite strongly irreducible decomposition up to similarity. From Lemma \ref{2-8}, we have that $\mathbf{T}|_{P\mathcal{H}}$, $\mathbf{T}|_{Q\mathcal{H}}$, $\mathbf{T}|_{(I-P)\mathcal{H}}$ and $\mathbf{T}|_{(I-Q)\mathcal{H}}$ also have unique finite strongly irreducible decomposition up to similarity. Since $\mathbf{T}|_{P\mathcal{H}}\sim_{s}\mathbf{T}|_{Q\mathcal{H}}$, there is an operator $X\in GL(\mathcal{L}(P\mathcal{H},Q\mathcal{H}))$ satisfying $X(T_{i}|_{P\mathcal{H}})X^{-1}=T_{i}|_{Q\mathcal{H}}$ for $1\leq i \leq m$.
Setting $\{P_{i}\}_{i=1}^{k}$
is a unit finite strongly irreducible decomposition of $\mathbf{T}|_{P\mathcal{H}}$,
we know that $\{XP_{i}X^{-1}\}_{i=1}^{k}$ is a unit finite strongly irreducible decomposition of $\mathbf{T}|_{Q\mathcal{H}}$ by using Lemma \ref{2-2}.
Let $\{P_{i}\}_{i=k+1}^{n}$ and $\{Q_{i}\}_{i=k+1}^{n}$ be unit finite strongly irreducible decomposition of $\mathbf{T}|_{(I-P)\mathcal{H}}$ and $\mathbf{T}|_{(I-Q)\mathcal{H}}$, respectively. Then $\{P_{i}\}_{i=1}^{n}$ and $\{\{XP_{i}X^{-1}\}_{i=1}^{k},\{Q_{i}\}_{i=k+1}^{n}\}$ are two unit finite strongly irreducible decompositions of $\mathbf{T}$.
From the uniqueness of finite strongly irreducible decomposition of $\mathbf{T}$ up to similarity, we know that there is $Y\in GL(\mathcal{A}^{\prime}(\mathbf{T}))$ such that $\{YP_{i}Y^{-1}\}_{i=1}^{n}$ is a rearrangement of $\{\{XP_{i}X^{-1}\}_{i=1}^{k},\{Q_{i}\}_{i=k+1}^{n}\}$. From Lemma \ref{2-7}, we get that for each $Q_{r}\in\{Q_{i}\}_{i=k+1}^{n}$, there are $P_{r^{\prime}}\in\{P_{i}\}_{i=k+1}^{n}$ and $Z_{r}\in GL(\mathcal{L}(Q_{r}\mathcal{H},P_{r^{\prime}}\mathcal{H}))$ such that
$$Z_{r}(T_{i}|_{Q_{r}\mathcal{H}})Z_{r}^{-1} = T_{i}|_{P_{r^{\prime}}\mathcal{H}},\quad 1\leq i\leq m$$ and
$r_{1}^{\prime}= r_{2}^{\prime}$ if $r_{1}=r_{2}$. Setting $Z_{j}:=X^{-1}|_{(XP_{j}X^{-1})\mathcal{H}},1\leq j\leq k.$
By Lemma \ref{2-4}, we have that $Z=Z_{1}\dot{+}\cdots\dot{+}Z_{n}\in GL(\mathcal{A}^{\prime}(\mathbf{T}))$ and $ZQZ^{-1}=P$, then $P\sim_{s}(\mathcal{A}^{\prime}(\mathbf{T}))Q$.
\end{proof}

\begin{lemma}\label{2-10}
Let $\mathbf{T}=(T_{1},\ldots,T_{m})\in\mathcal{L}(\mathcal{H})^{m}$, and let $P$ and $Q$ be idempotents in $\mathcal{A}^{\prime}(\mathbf{T})$. If $\mathbf{T}|_{P\mathcal{H}}$ is not similar to $\mathbf{T}|_{Q\mathcal{H}}$,
then $P\oplus0_{\mathcal{H}^{(n)}}$ is not similar to $Q\oplus0_{\mathcal{H}^{(n)}}$ in $\mathcal{A}^{\prime}(\mathbf{T}^{(n+1)})=\bigcap\limits_{i=1}^{m}\mathcal{A}^{\prime}(T_{i}^{(n+1)})$ for all $n\in\mathbb{N}$.
\end{lemma}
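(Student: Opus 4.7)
The plan is to establish the contrapositive: assuming that $P\oplus 0_{\mathcal{H}^{(n)}}$ is similar to $Q\oplus 0_{\mathcal{H}^{(n)}}$ inside $\mathcal{A}^{\prime}(\mathbf{T}^{(n+1)})$ for some $n\in\mathbb{N}$, I will derive that $\mathbf{T}|_{P\mathcal{H}}\sim_{s}\mathbf{T}|_{Q\mathcal{H}}$, contradicting the hypothesis. The point is that the direct-sum construction is essentially cosmetic for this particular pair of idempotents, because $0_{\mathcal{H}^{(n)}}$ kills the extra copies.

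First I would make the identifications precise. Writing $\mathcal{H}^{(n+1)}=\mathcal{H}\oplus\mathcal{H}^{(n)}$, I observe that the idempotent $P\oplus 0_{\mathcal{H}^{(n)}}\in\mathcal{L}(\mathcal{H}^{(n+1)})$ indeed lies in $\mathcal{A}^{\prime}(\mathbf{T}^{(n+1)})$, since $P\in\mathcal{A}^{\prime}(\mathbf{T})$ and $0\in\mathcal{A}^{\prime}(\mathbf{T}^{(n)})$. Its range is $P\mathcal{H}\oplus\{0\}$, which under the canonical unitary is identified with $P\mathcal{H}$; under this identification,
\[
\mathbf{T}^{(n+1)}\bigl|_{(P\oplus 0_{\mathcal{H}^{(n)}})\mathcal{H}^{(n+1)}}=\mathbf{T}|_{P\mathcal{H}},
\]
and analogously for $Q$.

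Next I would apply Lemma \ref{2-3} with the operator tuple $\mathbf{T}^{(n+1)}$ in place of $\mathbf{T}$ and with the idempotents $P\oplus 0_{\mathcal{H}^{(n)}}$ and $Q\oplus 0_{\mathcal{H}^{(n)}}$. The hypothesis of the contrapositive gives precisely $P\oplus 0_{\mathcal{H}^{(n)}}\sim_{s}(\mathcal{A}^{\prime}(\mathbf{T}^{(n+1)}))\,Q\oplus 0_{\mathcal{H}^{(n)}}$, so the lemma yields
\[
\mathbf{T}^{(n+1)}\bigl|_{(P\oplus 0_{\mathcal{H}^{(n)}})\mathcal{H}^{(n+1)}}\sim_{s}\mathbf{T}^{(n+1)}\bigl|_{(Q\oplus 0_{\mathcal{H}^{(n)}})\mathcal{H}^{(n+1)}}.
\]
Invoking the identifications from the previous step, this reduces to $\mathbf{T}|_{P\mathcal{H}}\sim_{s}\mathbf{T}|_{Q\mathcal{H}}$, contradicting the hypothesis of the lemma. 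Taking contrapositive concludes the argument.

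There is essentially no obstacle: the work is entirely bookkeeping about direct sums and restrictions, and the substance is packaged in Lemma \ref{2-3}. The only subtlety is making sure that the invertible intertwiner $X\in\mathcal{A}^{\prime}(\mathbf{T}^{(n+1)})$ furnished by the similarity of idempotents really does descend to an invertible intertwiner between $\mathbf{T}|_{P\mathcal{H}}$ and $\mathbf{T}|_{Q\mathcal{H}}$, but this is exactly the content of Lemma \ref{2-3} applied one dimension higher.
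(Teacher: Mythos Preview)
Your proposal is correct and follows essentially the same approach as the paper: argue by contradiction, apply Lemma~\ref{2-3} to $\mathbf{T}^{(n+1)}$ with the idempotents $P\oplus 0_{\mathcal{H}^{(n)}}$ and $Q\oplus 0_{\mathcal{H}^{(n)}}$, and then observe that the restrictions $\mathbf{T}^{(n+1)}|_{(P\oplus 0)\mathcal{H}^{(n+1)}}$ and $\mathbf{T}^{(n+1)}|_{(Q\oplus 0)\mathcal{H}^{(n+1)}}$ are unitarily equivalent to $\mathbf{T}|_{P\mathcal{H}}$ and $\mathbf{T}|_{Q\mathcal{H}}$ respectively.
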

\begin{proof}
Assume that there is a natural number $n$ such that  $P\oplus0_{\mathcal{H}^{(n)}}$ is similar to $Q\oplus0_{\mathcal{H}^{(n)}}$ in $\mathcal{A}^{\prime}(\mathbf{T}^{(n+1)})$, that is, there is
$X\in GL(\mathcal{A}^{\prime}(\mathbf{T}^{(n+1)}))$ satisfying $X(P\oplus0_{\mathcal{H}^{(n)}})X^{-1}=Q\oplus0_{\mathcal{H}^{(n)}}$.
From Lemma \ref{2-3}, there is an invertible operator $Y\in\mathcal{L}((P\oplus0_{\mathcal{H}^{(n)}})\mathcal{H}^{(n+1)},(Q\oplus0_{\mathcal{H}^{(n)}})\mathcal{H}^{(n+1)})$ such that
$$Y(T_{i}^{(n+1)}|_{(P\oplus0_{\mathcal{H}^{(n)}})\mathcal{H}^{(n+1)}})Y^{-1}=T_{i}^{(n+1)}|_{(Q\oplus0_{\mathcal{H}^{(n)}})\mathcal{H}^{(n+1)}},\quad 1\leq i\leq m.$$
Note that
$$\left(T_{1}^{(n+1)}|_{(P\oplus0_{\mathcal{H}^{(n)}})\mathcal{H}^{(n+1)}}, \ldots ,T_{m}^{(n+1)}|_{(P\oplus0_{\mathcal{H}^{(n)}})\mathcal{H}^{(n+1)}}\right)\sim_{u}\left(T_{1}|_{P\mathcal{H}}, \ldots, T_{m}|_{P\mathcal{H}}\right)$$
and
$$\left(T_{1}^{(n+1)}|_{(Q\oplus0_{\mathcal{H}^{(n)}})\mathcal{H}^{(n+1)}}, \ldots , T_{m}^{(n+1)}|_{(Q\oplus0_{\mathcal{H}^{(n)}})\mathcal{H}^{(n+1)}}\right)
\sim_{u}\left(T_{1}|_{Q\mathcal{H}} ,\ldots , T_{m}|_{Q\mathcal{H}}\right).$$
Thus, $\mathbf{T}|_{P\mathcal{H}}\sim_{s}\mathbf{T}|_{Q\mathcal{H}}$, this is contradictory.
\end{proof}

\begin{lemma}\label{2-11}
Let $\mathbf{T}=(T_{1},\ldots,T_{m})\in\mathcal{L}(\mathcal{H})^{m}$, idempotents $P$ and $Q$ in $\mathcal{A}^{\prime}(\mathbf{T})$ and let $\mathbf{T}^{(n)}$ has unique finite strongly irreducible decomposition up to similarity for any $n\in \mathbb{N}$.
Then $P\sim_{s}(\mathcal{A}^{\prime}(\mathbf{T}))Q$ if and only if $[P]=[Q]$ in $\bigvee(\mathcal{A}^{\prime}(\mathbf{T})).$
\end{lemma}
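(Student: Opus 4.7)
The plan is to deduce this lemma by assembling three prior ingredients: the standard remark that algebraic equivalence and similarity coincide on $M_{\infty}(\mathcal{A}'(\mathbf{T}))$, the matrix-amplification result Lemma \ref{2-10}, and the bridge between subspace-similarity and idempotent-similarity given by Lemma \ref{2-9}. The forward direction is essentially definitional, while the converse requires transporting the similarity witness from a matrix algebra back to $\mathcal{A}'(\mathbf{T})$ itself.

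For the easy direction, suppose $P\sim_s Q$ in $\mathcal{A}'(\mathbf{T})$ via some $z\in GL(\mathcal{A}'(\mathbf{T}))$. Setting $x:=zP$ and $y:=Pz^{-1}$ one checks $xy=Q$ and $yx=P$, so $P$ and $Q$ are already algebraically equivalent in $\mathcal{A}'(\mathbf{T})$; embedding into $M_{\infty}(\mathcal{A}'(\mathbf{T}))$ yields $[P]=[Q]$ in $\bigvee(\mathcal{A}'(\mathbf{T}))$. No unique-decomposition hypothesis is needed for this half.

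For the converse, assume $[P]=[Q]$ in $\bigvee(\mathcal{A}'(\mathbf{T}))=\mathcal{P}(M_{\infty}(\mathcal{A}'(\mathbf{T})))$. By the remark following the definition of $\bigvee(\mathcal{A})$, algebraic equivalence on $M_{\infty}(\mathcal{A}'(\mathbf{T}))$ agrees with similarity, so there exist $n\in\mathbb{N}$ and an invertible element of $M_{n+1}(\mathcal{A}'(\mathbf{T}))$ conjugating $P\oplus 0_{\mathcal{H}^{(n)}}$ to $Q\oplus 0_{\mathcal{H}^{(n)}}$. Under the canonical identification $M_{n+1}(\mathcal{A}'(\mathbf{T}))=\mathcal{A}'(\mathbf{T}^{(n+1)})$ (an operator on $\mathcal{H}^{(n+1)}$ commutes with each $T_i^{(n+1)}=T_i\oplus\cdots\oplus T_i$ iff each of its block entries commutes with $T_i$, for every $i$), this gives $P\oplus 0_{\mathcal{H}^{(n)}}\sim_s Q\oplus 0_{\mathcal{H}^{(n)}}$ in $\mathcal{A}'(\mathbf{T}^{(n+1)})$. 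The contrapositive of Lemma \ref{2-10} then delivers $\mathbf{T}|_{P\mathcal{H}}\sim_s \mathbf{T}|_{Q\mathcal{H}}$. Since by hypothesis $\mathbf{T}$ has a unique finite strongly irreducible decomposition up to similarity (the $n=1$ case of the standing assumption), Lemma \ref{2-9} lifts this operator-level similarity back to $P\sim_s Q$ in $\mathcal{A}'(\mathbf{T})$.

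No substantial obstacle is anticipated: the result is a bookkeeping step linking Lemmas \ref{2-9} and \ref{2-10} through the matrix-amplification picture of $\bigvee(\mathcal{A}'(\mathbf{T}))$. The only point demanding care is the identification $M_{n+1}(\mathcal{A}'(\mathbf{T}))=\mathcal{A}'(\mathbf{T}^{(n+1)})$ together with the unitary equivalence $\mathbf{T}^{(n+1)}|_{(P\oplus 0_{\mathcal{H}^{(n)}})\mathcal{H}^{(n+1)}}\sim_u \mathbf{T}|_{P\mathcal{H}}$, which is precisely what permits Lemma \ref{2-10} to be invoked and is already the observation used in its proof. The full strength of the hypothesis (uniqueness for every $\mathbf{T}^{(n)}$) is not strictly required in this argument, but appears to be stated to harmonize with the $K$-theoretic main theorem that follows.
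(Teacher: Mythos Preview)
Your proof is correct and follows essentially the same route as the paper: the forward direction is immediate, and for the converse both you and the paper pass from $[P]=[Q]$ to a similarity $P\oplus 0\sim_s Q\oplus 0$ in $\mathcal{A}'(\mathbf{T}^{(n+1)})$, deduce $\mathbf{T}|_{P\mathcal{H}}\sim_s\mathbf{T}|_{Q\mathcal{H}}$, and finish with Lemma~\ref{2-9}. The only cosmetic difference is that you invoke the contrapositive of Lemma~\ref{2-10} for the middle step, whereas the paper re-derives that step on the spot via Lemma~\ref{2-3} and the unitary equivalence $\mathbf{T}^{(n+1)}|_{(P\oplus 0)\mathcal{H}^{(n+1)}}\sim_u\mathbf{T}|_{P\mathcal{H}}$; your closing remark that only the $n=1$ case of the uniqueness hypothesis is actually used here is also accurate.
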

\begin{proof}
If $P\sim_{s}(\mathcal{A}^{\prime}(\mathbf{T}))Q$, there is an invertible operator $X\in\mathcal{A}^{\prime}(\mathbf{T})$ such that $XPX^{-1}=Q$, and then $[P]=[Q]$ in $\bigvee(\mathcal{A}^{\prime}(\mathbf{T})).$
Conversely, if $[P]=[Q]$ in $\bigvee(\mathcal{A}^{\prime}(\mathbf{T}))$, there is a natural number $n\in \mathbb{N}$ such that
$$P\oplus0_{\mathcal{H}^{(n)}}\sim_{s}(\mathcal{A}^{\prime}(\mathbf{T}^{(n+1)}))Q\oplus0_{\mathcal{H}^{(n)}}.$$
From Lemma \ref{2-3}, we have that
$\mathbf{T}^{(n+1)}|_{(P\oplus0_{\mathcal{H}^{(n)}})\mathcal{H}^{(n+1)}}$ is similar to $\mathbf{T}^{(n+1)}|_{(Q\oplus0_{\mathcal{H}^{(n)}})\mathcal{H}^{(n+1)}}$.
Note that $\mathbf{T}^{(n+1)}|_{(P\oplus0_{\mathcal{H}^{(n)}})\mathcal{H}^{(n+1)}}$ and $\mathbf{T}^{(n+1)}|_{(Q\oplus0_{\mathcal{H}^{(n)}})\mathcal{H}^{(n+1)}}$ are unitary equivalent to $\mathbf{T}|_{P\mathcal{H}}$ and $\mathbf{T}|_{Q\mathcal{H}}$, respectively. Therefore,  $\mathbf{T}|_{P\mathcal{H}}$ is similar to $\mathbf{T}|_{Q\mathcal{H}}$,
and then $P\sim_{s}(\mathcal{A}^{\prime}(\mathbf{T}))Q$ is obtained from Lemma \ref{2-9}.
\end{proof}

\begin{lemma}\label{2-12}
Let $\mathbf{T}=(T_{1},\ldots,T_{m})\in\mathcal{L}(\mathcal{H})^{m}$.
Then $$M_{n}(\mathcal{A}^{\prime}(\mathbf{T}))=\mathcal{A}^{\prime}(\mathbf{T}^{(n)})\quad\text{and} \quad \bigvee(\mathcal{A}^{\prime}(\mathbf{T}^{(n)}))\cong\bigvee(\mathcal{A}^{\prime}(\mathbf{T})),\quad n\in \mathbb{N}.$$
\end{lemma}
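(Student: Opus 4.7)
The plan is to verify the two assertions in sequence: the first is a direct block-matrix computation, and the second follows by combining the first with the standard cofinality argument that matrix stabilization does not change the semigroup of idempotent equivalence classes.

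For the identification $\mathcal{A}^{\prime}(\mathbf{T}^{(n)}) = M_{n}(\mathcal{A}^{\prime}(\mathbf{T}))$, I would represent an arbitrary $X \in \mathcal{L}(\mathcal{H}^{(n)})$ as an $n \times n$ operator matrix $(X_{ij})_{1 \le i,j \le n}$ with $X_{ij} \in \mathcal{L}(\mathcal{H})$. Since each $T_{k}^{(n)}$ is the block-diagonal operator $\mathrm{diag}(T_{k},\dots,T_{k})$, the commutation relation $X T_{k}^{(n)} = T_{k}^{(n)} X$ decouples entry-wise into $X_{ij} T_{k} = T_{k} X_{ij}$ for every pair $(i,j)$. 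Imposing this simultaneously for $k=1,\dots,m$ is equivalent to $X_{ij} \in \bigcap_{k=1}^{m}\mathcal{A}^{\prime}(T_{k}) = \mathcal{A}^{\prime}(\mathbf{T})$ for all $i,j$, which is precisely the condition $X \in M_{n}(\mathcal{A}^{\prime}(\mathbf{T}))$; the reverse inclusion is immediate.

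For the second assertion, abbreviate $\mathcal{A} = \mathcal{A}^{\prime}(\mathbf{T})$. By the first part, $M_{k}(\mathcal{A}^{\prime}(\mathbf{T}^{(n)})) = M_{k}(M_{n}(\mathcal{A}))$, and the usual reshuffling of block entries identifies this unital algebra with $M_{kn}(\mathcal{A})$; hence $M_{\infty}(\mathcal{A}^{\prime}(\mathbf{T}^{(n)})) = \bigcup_{k \ge 1} M_{kn}(\mathcal{A})$ sits as a unital subalgebra of $M_{\infty}(\mathcal{A}) = \bigcup_{k \ge 1} M_{k}(\mathcal{A})$. The plan is to show that this inclusion induces a bijection on algebraic equivalence classes of idempotents, giving the desired semigroup isomorphism. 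Surjectivity: any idempotent $p \in M_{k}(\mathcal{A})$ is identified with $p \oplus 0_{ln-k} \in M_{ln}(\mathcal{A})$ for any $l$ with $ln \ge k$, so every class in $\bigvee(\mathcal{A})$ is already represented inside $\bigcup_{k} M_{kn}(\mathcal{A})$. Injectivity: if two such idempotents are algebraically equivalent via $x,y$ lying in $M_{N}(\mathcal{A})$, one enlarges $N$ to the next multiple $N^{\prime}$ of $n$ and pads $x,y$ by zeros so that the equivalence is witnessed inside $M_{N^{\prime}}(\mathcal{A}) = M_{N^{\prime}/n}(M_{n}(\mathcal{A}))$, hence inside $M_{\infty}(M_{n}(\mathcal{A}))$. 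Since direct sums of idempotents are preserved by the inclusion, this yields $\bigvee(\mathcal{A}^{\prime}(\mathbf{T}^{(n)})) \cong \bigvee(\mathcal{A}^{\prime}(\mathbf{T}))$.

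The substantive work is really bookkeeping: one must carefully distinguish the ``internal'' direct sum that builds $\mathcal{H}^{(n)}$ from the ``external'' matrix amplification used to define $M_{k}$, and verify that the canonical identification $M_{k}(M_{n}(\mathcal{A})) \cong M_{kn}(\mathcal{A})$ is compatible with both products and direct sums of idempotents. No new ideas are needed beyond the classical matrix-stabilization principle for $K$-theoretic semigroups, applied here to the specific commutant $\mathcal{A}^{\prime}(\mathbf{T})$; the main obstacle is simply keeping the two block-matrix conventions consistent throughout.
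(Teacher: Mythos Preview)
Your proposal is correct and follows essentially the same approach as the paper: the first assertion is handled by the identical block-matrix argument, and for the second the paper simply writes $\bigvee(\mathcal{A}^{\prime}(\mathbf{T}^{(n)}))=\bigvee(M_{n}(\mathcal{A}^{\prime}(\mathbf{T})))\cong\bigvee(\mathcal{A}^{\prime}(\mathbf{T}))$, invoking as known the matrix-stability fact that you have chosen to spell out via the cofinality argument. The extra detail you provide is sound and merely expands what the paper takes for granted.
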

\begin{proof}
Note that $M_{n}(\mathcal{A}^{\prime}(\mathbf{T}))\subseteq\mathcal{A}^{\prime}(\mathbf{T}^{(n)})$ is obvious.
For any $((X_{i,j}))_{n\times n}\in\mathcal{A}^{\prime}(\mathbf{T}^{(n)})$, there is
$$((X_{i,j}))_{n\times n}T_{k}^{(n)}=T_{k}^{(n)}((X_{i,j}))_{n\times n},\quad 1\leq k\leq m.$$
So $X_{i,j}T_{k}=T_{k}X_{i,j},1\leq k\leq m.$ This means that $X_{i,j}\in\mathcal{A}^{\prime}(\mathbf{T})$, $((X_{i,j}))_{n\times n}\in M_{n}(\mathcal{A}^{\prime}(\mathbf{T}))$ and
$\mathcal{A}^{\prime}(\mathbf{T}^{(n)})\subseteq M_{n}(\mathcal{A}^{\prime}(\mathbf{T})).$ Thus $M_{n}(\mathcal{A}^{\prime}(\mathbf{T}))=\mathcal{A}^{\prime}(\mathbf{T}^{(n)})$ and $\bigvee(\mathcal{A}^{\prime}(\mathbf{T}^{(n)}))=\bigvee(M_{n}(\mathcal{A}^{\prime}(\mathbf{T})))\cong\bigvee(\mathcal{A}^{\prime}(\mathbf{T})).$
\end{proof}

Let $\mathbf{A}=(A_{1},\ldots,A_{m})\in\mathcal{L}(\mathcal{H})^{m}$ be an $m$-tuple of commuting operators, $\mathcal{H}^{(n)}$ be the direct sum of $n$ copies of $\mathcal{H}$ and $\mathbf{A}^{(n)}=\left(\bigoplus\limits_{1}^{n}A_{1},\ldots,\bigoplus\limits_{1}^{n}A_{m}\right)$ be an $m$-tuple of commuting operators acting on $\mathcal{H}^{(n)}.$
\begin{thm}\label{2-1}
Let $\mathbf{T}=(T_{1},\ldots,T_{m})\in\mathcal{L}(\mathcal{H})^{m}$ be an $m$-tuple of commuting operators. Then the following are equivalent:
\begin{itemize}
  \item [(1)]$\mathbf{T}=(T_{1},\ldots,T_{m})\sim_{s}\left(\bigoplus\limits_{i=1}^{k}A_{1,i}^{(n_{i})},\ldots,\bigoplus\limits_{i=1}^{k}A_{m,i}^{(n_{i})}\right)=
      \bigoplus\limits_{i=1}^{k}\mathbf{A}_{i}^{(n_{i})}=\mathbf{A}$ with respect to the decomposition $\mathcal{H}=\bigoplus\limits_{i=1}^{k}\mathcal{H}_{i}^{(n_{i})}$, where $k,n_{i}<\infty$, $\mathbf{A}_{i}=(A_{1,i},\ldots,A_{m,i})$ is strongly irreducible, $\mathbf{A}_{i}\nsim_{s}\mathbf{A}_{j}$ for $i\neq j$, and for each natural number $n$, $\mathbf{T}^{(n)}$ has unique finite strongly irreducible decomposition up to similarity;
  \item [(2)]$\bigvee(\mathcal{A}^{\prime}(\mathbf{T}))\cong\mathbb{N}^{k}$ and the isomorphism $h:\bigvee(\mathcal{A}^{\prime}(\mathbf{T}))\rightarrow \mathbb{N}^{k}$ sends $[I]$ to $(n_{1},\ldots,n_{k})$. That is, $h([I])=n_{1}e_{1}+\cdots+n_{k}e_{k}$, where $0\neq n_{j}\in\mathbb{N}, 1\leq j\leq k,$ and $\{e_{i}\}_{i=1}^{k}$ are the generators of $\mathbb{N}^{k}$.
\end{itemize}
\end{thm}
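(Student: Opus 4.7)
The plan is to establish the two directions separately, relying on Lemmas \ref{2-8}, \ref{2-11}, \ref{2-12} for $(1)\Rightarrow(2)$ and on a K-theoretic idempotent lifting together with Lemmas \ref{2-3} and \ref{2-4} for $(2)\Rightarrow(1)$. I expect the main effort to lie in the latter direction.

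For $(1)\Rightarrow(2)$, the unique SI hypothesis on every amplification $\mathbf{T}^{(n)}$ makes Lemma \ref{2-11} available throughout. I would define
\[
h:\bigvee(\mathcal{A}^{\prime}(\mathbf{T}))\longrightarrow\mathbb{N}^{k},\qquad h([P])=(m_{1},\ldots,m_{k}),
\]
where $P\in\mathcal{A}^{\prime}(\mathbf{T}^{(N)})=M_{N}(\mathcal{A}^{\prime}(\mathbf{T}))$ via Lemma \ref{2-12}, $\mathbf{T}^{(N)}|_{P\mathcal{H}^{(N)}}$ has a unique finite SI decomposition by Lemma \ref{2-8}, and $m_{i}$ counts the SI constituents similar to $\mathbf{A}_{i}$. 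Every SI constituent is similar to some $\mathbf{A}_{i}$ because combining the SI decomposition of $\mathbf{T}^{(N)}|_{P\mathcal{H}^{(N)}}$ with one of $\mathbf{T}^{(N)}|_{(I-P)\mathcal{H}^{(N)}}$ produces a unit finite SI decomposition of $\mathbf{T}^{(N)}$ which, by the uniqueness hypothesis, must be conjugate to the canonical decomposition into $\mathbf{A}_{1}^{(Nn_{1})},\ldots,\mathbf{A}_{k}^{(Nn_{k})}$, forcing a class-wise match. Well-definedness on equivalence classes uses Lemmas \ref{2-3} and \ref{2-12}; additivity follows from orthogonal direct sums of idempotents; injectivity is precisely Lemma \ref{2-11} (applied to $\mathbf{T}^{(N)}$); surjectivity is witnessed by explicit block-diagonal projections onto $\bigoplus\mathbf{A}_{i}^{(m_{i})}$ sitting inside $\bigoplus\mathbf{A}_{i}^{(Nn_{i})}$ for $N$ large. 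Finally $h([I])=(n_{1},\ldots,n_{k})$ is read off from $\mathbf{T}\sim_{s}\bigoplus\mathbf{A}_{i}^{(n_{i})}$.

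For $(2)\Rightarrow(1)$, the crucial opening step is to lift the additive decomposition $[I]=\sum_{i,j}e_{i}$ in $\bigvee(\mathcal{A}^{\prime}(\mathbf{T}))\cong\mathbb{N}^{k}$ (with $1\leq i\leq k$, $1\leq j\leq n_{i}$) to an orthogonal decomposition $I=\sum_{i,j}P_{i,j}$ by idempotents inside $\mathcal{A}^{\prime}(\mathbf{T})$ itself, with $[P_{i,j}]=e_{i}$, using the classical principle that sum-decompositions of an idempotent class in the Murray--von Neumann semigroup of a unital Banach algebra lift to orthogonal idempotent decompositions in the algebra. Each $P_{i,j}$ is then strongly irreducible because $e_{i}$ is indecomposable in $\mathbb{N}^{k}$: a nontrivial splitting $P_{i,j}=R_{1}+R_{2}$ would give $e_{i}=[R_{1}]+[R_{2}]$ with both summands nonzero (since $[R]=0$ in $\bigvee$ forces $R\sim 0$ algebraically, whence $R=0$). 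For fixed $i$ the equality $[P_{i,j}]=[P_{i,j'}]$ gives similarity of these idempotents in some $\mathcal{A}^{\prime}(\mathbf{T}^{(N)})$, and Lemma \ref{2-3} applied there, combined with Lemma \ref{2-12} and the unitary equivalence $\mathbf{T}^{(N)}|_{(P_{i,j}\oplus 0)\mathcal{H}^{(N)}}\sim_{u}\mathbf{T}|_{P_{i,j}\mathcal{H}}$, yields $\mathbf{T}|_{P_{i,j}\mathcal{H}}\sim_{s}\mathbf{T}|_{P_{i,j'}\mathcal{H}}$. Setting $\mathbf{A}_{i}:=\mathbf{T}|_{P_{i,1}\mathcal{H}}$ produces $\mathbf{T}\sim_{s}\bigoplus_{i=1}^{k}\mathbf{A}_{i}^{(n_{i})}$.

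To rule out $\mathbf{A}_{i}\sim_{s}\mathbf{A}_{i'}$ for $i\neq i'$, I would exploit the converse of Lemma \ref{2-3}: an intertwining invertible $Y:P_{i,1}\mathcal{H}\to P_{i',1}\mathcal{H}$ extends by zero to $\widetilde{Y}=YP_{i,1}$ and $\widetilde{Y^{-1}}=Y^{-1}P_{i',1}$ in $\mathcal{A}^{\prime}(\mathbf{T})$ satisfying $\widetilde{Y^{-1}}\widetilde{Y}=P_{i,1}$ and $\widetilde{Y}\widetilde{Y^{-1}}=P_{i',1}$, giving $[P_{i,1}]=[P_{i',1}]$ in $\bigvee$ and thus $e_{i}=e_{i'}$, a contradiction. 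For the uniqueness of the finite SI decomposition of each $\mathbf{T}^{(n)}$, Lemma \ref{2-12} transports the hypothesis on $\bigvee$ to $\mathbf{T}^{(n)}$; in any unit finite SI decomposition of $\mathbf{T}^{(n)}$ the classes of the pieces must all lie in $\{e_{1},\ldots,e_{k}\}$ by indecomposability, the multiplicities are fixed by $[I_{\mathbf{T}^{(n)}}]=(nn_{1},\ldots,nn_{k})$, and after pairing two such decompositions by class, Lemma \ref{2-4} glues intertwiners (produced from Lemma \ref{2-3} via amplification) into a single element of $GL(\mathcal{A}^{\prime}(\mathbf{T}^{(n)}))$ conjugating one decomposition to the other up to the permutation. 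The main obstacle is the K-theoretic idempotent lifting in $(2)\Rightarrow(1)$, which must be invoked at the level of $\mathcal{A}^{\prime}(\mathbf{T})$ itself rather than merely in an amplification, so that the resulting pieces actually furnish a unit finite SI decomposition of $\mathbf{T}$; the secondary subtlety is the converse of Lemma \ref{2-3} used to exclude $\mathbf{A}_{i}\sim_{s}\mathbf{A}_{i'}$, since Lemma \ref{2-9} is not yet available at that stage.
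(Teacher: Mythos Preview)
Your proposal is correct, and for $(1)\Rightarrow(2)$ it follows essentially the same path as the paper (the paper does not single out Lemma~\ref{2-11} for injectivity but argues directly that $E\sim_{s}\sum P_{i}^{(m_{i})}\sim_{s}F$; this is a cosmetic difference).

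For $(2)\Rightarrow(1)$ your route genuinely diverges from the paper's. You invoke the standard algebraic fact that a sum-decomposition $[I]=\sum_{i,j}e_{i}$ in $\bigvee(\mathcal{A}'(\mathbf{T}))$ lifts to an orthogonal idempotent decomposition $I=\sum_{i,j}P_{i,j}$ inside $\mathcal{A}'(\mathbf{T})$ itself, and then read off strong irreducibility of each $P_{i,j}$ from indecomposability of $e_{i}$ in $\mathbb{N}^{k}$. The paper avoids this lifting lemma entirely: it first fixes idempotents $Q_{i}$ in some amplification with $h([Q_{i}])=e_{i}$, then proves in a separate step (its Step~3) that any unit decomposition of $\mathbf{T}$ has length at most $\sum n_{i}$, which forces the existence of a finite SI decomposition by successive refinement; only afterwards does it analyze an arbitrary such decomposition (its Steps~1, 2, 4) using the $Q_{i}$ as reference points. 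Your approach is shorter and more conceptual, at the cost of importing the lifting principle; the paper's approach is more self-contained, trading the lifting lemma for the length-bound argument. Both handle uniqueness in the same way, pairing pieces by class and assembling intertwiners via Lemma~\ref{2-4}.
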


\begin{proof}
$(1)\Rightarrow(2)$ Since the Hilbert space $\mathcal{H}=\bigoplus\limits_{i=1}^{k}\mathcal{H}_{i}^{(n_{i})}$, where $k,n_{i}<\infty$. Setting $P_{i}$ is the orthogonal projection onto $\mathcal{H}_{i}$ for $1\leq i\leq k$.
Since $\mathbf{T}^{(n)}$ has unique finite strongly irreducible decomposition up to similarity for any natural number $n\in\mathbb{N}$,
by Lemma \ref{2-8} and Lemma \ref{2-12}, we know that for any idempotent $E\in M_{n}(\mathcal{A}^{\prime}(\mathbf{T}))=\mathcal{A}^{\prime}(\mathbf{T}^{(n)})$, the $m$-tuples of commuting operators $\mathbf{T}^{(n)}|_{E\mathcal{H}}=(T_{1}^{(n)},\ldots,T_{m}^{(n)})|_{E\mathcal{H}}$ and $\mathbf{T}^{(n)}|_{(I-E)\mathcal{H}}=(T_{1}^{(n)},\ldots,T_{m}^{(n)})|_{(I-E)\mathcal{H}}$
are unique finite strongly irreducible decomposition up to similarity. Letting $\{Q_{i}\}_{i=1}^{t}$ and $\{Q_{i}\}_{i=t+1}^{l}$ is a unit finite strongly irreducible decomposition of $\mathbf{T}^{(n)}|_{E\mathcal{H}}$ and $\mathbf{T}^{(n)}|_{(I-E)\mathcal{H}}$, respectively. Then $\{Q_{i}\}_{i=1}^{l}$ is a unit finite strongly irreducible decomposition of $\mathbf{T}^{(n)}$.
Since we have another strongly irreducible decomposition of $\mathbf{T}^{(n)}$ using $nn_{i}$ copies of each of the projections $P_{i}, 1\leq i\leq k$, and $\mathbf{T}^{(n)}$ has unique finite strongly irreducible decomposition up to similarity, we know that there is  an operator $X\in GL(\mathcal{A}^{\prime}(\mathbf{T}^{(n)}))$ such that $XQ_{j}X^{-1}$ is a copy of one of the $P_{i}$, with appropriate multiplicity conditions. In particular, there are integers
$m_{i}$, $0\leq m_{i}\leq nn_{i}$, satisfying
$$XEX^{-1}=X(Q_{1}+Q_{2}+\cdots+Q_{a})X^{-1}=\sum\limits_{i=1}^{k}P_{i}^{(m_{i})}.$$
Define the mapping $h:\bigvee (\mathcal{A}^{\prime}(\mathbf{T}))\longrightarrow\mathbb{N}^{k}$ as
$$[E]\longmapsto(m_{1},m_{2},\ldots,m_{k}).$$
If $[E]=[F]$ in $\bigvee (\mathcal{A}^{\prime}(\mathbf{T}))$, there is an operator $Y\in GL(\mathcal{A}^{\prime}(\mathbf{T}))$ such that $YEY^{-1}=F$. Thus, $F\sim_{s} E\sim_{s} \sum\limits_{i=1}^{k}P_{i}^{(m_{i})}$ and $h([F])=h([E])=(m_{1},\ldots,m_{k})$, this means that the mapping $h$ is well-defined.
Furthermore, if $h([E])= h([F])$, then $E \sim_{s} \sum\limits_{i=1}^{k}P_{i}^{(m_{i})}\sim_{s} F$, which means that the mapping $h$ is one-to-one. For any $k$-tuple of nonnegative integers $(m_{1},\ldots,m_{k})$, there is a natural number $n$, such that for all $i, 1\leq i\leq k$, there are $m_{i}\leq nn_{i}$ and
$h([\sum\limits_{i=1}^{k}P_{i}^{(m_{i})}])=(m_{1},\ldots,m_{k}),$
which shows that the mapping $h$ is onto.
Therefore, we know that $h$ is an isomorphic, and then $\bigvee\left(\mathcal{A}^{\prime}(\mathbf{T})\right)\cong\mathbb{N}^{k}$, and $h([I])=h\left(\left[\sum\limits_{i=1}^{k}P_{i}^{(n_{i})}\right]\right)=(n_{1},\ldots,n_{k})$.

$(2)\Rightarrow(1)$
If $\bigvee(\mathcal{A}^{\prime}(\mathbf{T}))\cong\mathbb{N}^{k}$ and $h$ is the isomorphic mapping from $\bigvee(\mathcal{A}^{\prime}(\mathbf{T}))$ to $\mathbb{N}^{k}$ that satisfies $h([I])=n_{1}e_{1}+\cdots+n_{k}e_{k}$, where $0\neq n_{j}\in\mathbb{N}, 1\leq j\leq k,$ and $\{e_{i}\}_{i=1}^{k}$ are the generators of $\mathbb{N}^{k}$, there is a natural number $r$ and idempotents $\{Q_{i}\}_{i=1}^{k}\subset\mathcal{A}^{\prime}(\mathbf{T}^{(r)})$ satisfy
\begin{equation}\label{14}
h([Q_{i}])=e_{i},\quad 1\leq i\leq k.
\end{equation}
From Lemma \ref{2-12}, we know that $\bigvee(\mathcal{A}^{\prime}(\mathbf{T}^{(n)}))\cong\bigvee(\mathcal{A}^{\prime}(\mathbf{T}))\cong\mathbb{N}^{k}$, so we only need to prove that $\mathbf{T}$ has unique finite strongly irreducible decomposition up to similarity, and other cases can be proved similarly. Let's prove it in four steps:

$\mathbf{Setp~1.}$ For any idempotent $P\in\mathcal{A}^{\prime}(\mathbf{T})$, if $\mathbf{T}|_{P\mathcal{H}}$ is strongly irreducible, there is a $e_{i}\in\{e_{j}\}_{j=1}^{k}$ such that $h([P])=e_{i}$.

Since the mapping $h:\bigvee(\mathcal{A}^{\prime}(\mathbf{T}))\rightarrow \mathbb{N}^{k}$ is isomorphic, by (\ref{14}), there are $\{\lambda_{i}\}_{i=1}^{k}\subset\mathbb{N}$ such that
$$h([P])=\sum\limits_{i=1}^{k}\lambda_{i}e_{i}=\sum\limits_{i=1}^{k}\lambda_{i}h\left([Q_{i}]\right).$$
For $l:=r\sum\limits_{i=1}^{k}\lambda_{i}$, then there is a natural number $n > l$ such that
$$P\oplus0_{\mathcal{H}^{(n-1)}}\sim_{s}(\mathcal{A}^{\prime}(\mathbf{T}^{(n)}))\sum\limits_{i=1}^{k}Q_{i}^{(\lambda_{i})}\oplus0_{\mathcal{H}^{(n-l)}}.$$
From Lemma \ref{2-3}, we have that
$$\mathbf{T}|_{P\mathcal{H}}\sim_{u}\mathbf{T}^{(n)}|_{\left(P\oplus0_{\mathcal{H}^{(n-1)}}\right)\mathcal{H}^{(n)}}\sim_{s}
\mathbf{T}^{(n)}|_{\left(\sum\limits_{i=1}^{k}Q_{i}^{(\lambda_{i})}\oplus0_{\mathcal{H}^{(n-l)}}\right)\mathcal{H}^{(n)}}
\sim_{u}\mathbf{T}|_{\left(\sum\limits_{i=1}^{k}Q_{i}^{(\lambda_{i})}\right)\mathcal{H}^{(l)}}.$$
Thus $\mathbf{T}|_{P\mathcal{H}}\sim_{s}\mathbf{T}|_{\left(\sum\limits_{i=1}^{k}Q_{i}^{(\lambda_{i})}\right)\mathcal{H}^{(l)}}.$
Since $\mathbf{T}|_{P\mathcal{H}}$ is strongly irreducible and the strongly irreducibility remains unchanged under similarity. Therefore, $\mathbf{T}|_{\left(\sum\limits_{i=1}^{k}Q_{i}^{(\lambda_{i})}\right)\mathcal{H}^{(l)}}$ is also strongly irreducible, then there is only one $i\in\{1,2,\ldots,k\}$ such that
$\lambda_{j}=
\left\{
\begin{array}{lc}
1, & j=i,\\
0, & j\neq i.\\
\end{array}
\right.$
It follows that $h([P])=e_{i}$.

$\mathbf{Setp~2.}$ For any idempotent $P$ and $Q$ in $\mathcal{A}^{\prime}(\mathbf{T})$, if $h([P])= h([Q])$, then $\mathbf{T}|_{P\mathcal{H}}\sim_{s}\mathbf{T}|_{Q\mathcal{H}}$.

Setting idempotents $P,Q\in\mathcal{A}^{\prime}(\mathbf{T})$ and
$$h([P])= h([Q])=\sum\limits_{i=1}^{k}\lambda_{i}e_{i}=\sum\limits_{i=1}^{k}\lambda_{i}h([Q_{i}]),$$
 where $\{\lambda_{i}\}_{i=1}^{k}\subset\mathbb{N}$. Letting $w=r\sum\limits_{i=1}^{k}\lambda_{i}$, then there is a natural number $n > w$ such that
$P\oplus0_{\mathcal{H}^{(n-1)}}\sim_{s}(\mathcal{A}^{\prime}(\mathbf{T}^{(n)}))\sum\limits_{i=1}^{k}Q_{i}^{(\lambda_{i})}\oplus0_{\mathcal{H}^{(n-w)}}\sim_{s} Q\oplus0_{\mathcal{H}^{(n-1)}}.$
From Lemma \ref{2-3}, $$\mathbf{T}|_{P\mathcal{H}}\sim_{u}\mathbf{T}^{(n)}|_{\left(P\oplus0_{\mathcal{H}^{(n-1)}}\right)\mathcal{H}^{(n)}}\sim_{s}\mathbf{T}^{(n)}|_{\left(Q\oplus0_{\mathcal{H}^{(n-1)}}\right)\mathcal{H}^{(n)}}
\sim_{u}\mathbf{T}|_{Q\mathcal{H}}.$$
Thus $\mathbf{T}|_{P\mathcal{H}}\sim_{s}\mathbf{T}|_{Q\mathcal{H}}.$

$\mathbf{Setp~3.}$ We prove that $\mathbf{T}$ has finite unit decomposition.

Let $\{P_{i}\}_{i=1}^{s}$ be a unit decomposition of $\mathbf{T}$ and $h([P_{i}])=\sum\limits_{j=1}^{k}\lambda_{i,j}e_{j}$, where $\{\lambda_{i,j}\}_{j=1}^{k}$ are natural numbers. Then there is at least one $\lambda_{i,j}\neq 0$ in $\{\lambda_{i,j}\}_{j=1}^{k}$.  Now we just need to show that $s$ is a finite number. From
$$\sum\limits_{i=1}^{k}n_{i}e_{i}=h([I])=h\left(\left[\sum\limits_{i=1}^{s}P_{i}\right]\right)=\sum\limits_{i=1}^{s}h([P_{i}])=\sum\limits_{i=1}^{s}\sum\limits_{j=1}^{k}\lambda_{i,j}e_{j},$$
we know that $s\leq \sum\limits_{i=1}^{s}\sum\limits_{j=1}^{k}\lambda_{i,j}=\sum\limits_{i=1}^{k}n_{i}<\infty.$ So $\mathbf{T}$ has finite unit decomposition, and then $\mathbf{T}$ has finite strongly irreducible decomposition.

$\mathbf{Setp~4.}$ We prove that $\mathbf{T}$ has unique finite strongly irreducible decomposition up to similarity.

Let $\{P_{i}\}_{i=1}^{t}$ be a unit finite strongly irreducible decomposition of $\mathbf{T}$, then $h\left(\sum\limits_{i=1}^{t}[P_{i}]\right)=h([I])=\sum\limits_{i=1}^{k}n_{i}e_{i}$, with $(i)$, for each $P_{i}\in\{P_{i}\}_{i=1}^{t}$, there is $e_{i^{\prime}}\in\{e_{i}\}_{i=1}^{k}$ such that $h([P_{i}])=e_{i^{\prime}}$. Then we can get $t =\sum\limits_{i=1}^{k}n_{i}<\infty$, and there are $P_{i_{1}},P_{i_{2}},\ldots,P_{i_{n_{i}}}$ in $\{P_{i}\}_{i=1}^{t}$ for each $i,1\leq i\leq k$, which satisfies $h([P_{i_{1}}])=h([P_{i_{2}}])=\cdots=h([P_{i_{n_{i}}}])=e_{i}$. By $(ii),$ we have that
$$\mathbf{T}|_{P_{i_{j}}\mathcal{H}}\sim_{s}\mathbf{T}|_{P_{i_{l}}\mathcal{H}},\quad 1\leq j,l\leq n_{i}.$$
Letting $\mathbf{A}_{i}=\mathbf{T}|_{P_{i_{1}}\mathcal{H}}=(T_{1}|_{P_{i_{1}}\mathcal{H}},T_{2}|_{P_{i_{1}}\mathcal{H}},\ldots,T_{m}|_{P_{i_{1}}\mathcal{H}})$ for $i=1,2,\ldots,k.$ Then
$$\mathbf{T}\sim_{s}\sum\limits_{i=1}^{k}\mathbf{A}_{i}^{(n_{i})},$$
where $k,n_{i}<\infty$ and $\mathbf{A}_{i}\nsim_{s}\mathbf{A}_{j}$ for $i\neq j.$
Let $\{P_{i}^{\prime}\}_{i=1}^{r}$ be another unit finite strongly irreducible decomposition of $\mathbf{T}$.  Similarly, we can also get $r=\sum\limits_{i=1}^{k}n_{i}=t$, and for each $i,1\leq i\leq k,$ there are $P_{i_{1}}^{\prime},P_{i_{2}}^{\prime},\ldots,P_{i_{n_{i}}}^{\prime}$ in $\{P_{i}^{\prime}\}_{i=1}^{t}$ such that $h([P_{i_{1}}])=h([P_{i_{2}}])=\cdots=h([P_{i_{n_{i}}}])=e_{i}$ and $\mathbf{T}|_{P_{i_{j}}\mathcal{H}}\sim_{s}\mathbf{T}|_{P_{i_{l}}\mathcal{H}}$ for $1\leq j,l\leq n_{i}.$
Therefore, there is a permutation $\Pi\in S_{t}$ such that for $i=1,2,\ldots,t$, $h([P_{i}])=h([P_{\Pi(i)}^{\prime}])$. By $(ii)$,
$\mathbf{T}|_{P_{i}\mathcal{H}}\sim_{s}\mathbf{T}|_{P_{\Pi(i)}^{\prime}\mathcal{H}}$, then there is $X_{i}\in GL(\mathcal{L}(P_{i}\mathcal{H},P_{\Pi(i)}^{\prime}\mathcal{H}))$ such that $X_{i}(\mathbf{T}|_{P_{i}\mathcal{H}})X_{i}^{-1} =\mathbf{T}|_{P_{\Pi(i)}^{\prime}\mathcal{H}}$. From Lemma \ref{2-4}, $X=X_{1}\dot{+}X_{2}\dot{+}\cdots\dot{+}X_{n}\in GL(\mathcal{A}^{\prime}(\mathbf{T}))$ and $XP_{i}X^{-1} = P_{\Pi(i)}^{\prime}$ for $i=1,2,\ldots,t.$ That is, $\mathbf{T}$ has unique finite strongly irreducible decomposition up to similarity.
\end{proof}

\begin{corollary}\label{2-13}
Let $\widehat{\mathbf{T}}=(T_{1},T_{2},\ldots,T_{m}), \mathbf{\widetilde{T}}=(\widetilde{T}_{1},\widetilde{T}_{2},\ldots,\widetilde{T}_{m})\in\mathcal{L}(\mathcal{H})^{m}$ be strongly irreducible  m-tuples, and let
$\mathbf{T}=\widehat{\mathbf{T}}\oplus\mathbf{\widetilde{T}}=(\left(\begin{pmatrix}T_{1}&0\\ 0&\widetilde{T}_{1}\end{pmatrix},\begin{pmatrix}T_{2}&0\\ 0&\widetilde{T}_{2}\end{pmatrix}\cdots,\begin{pmatrix}T_{m}&0\\ 0&\widetilde{T}_{m}\end{pmatrix}\right)$. Then the following properties hold:
\begin{itemize}
  \item [(1)]$\widehat{\mathbf{T}}\sim_{s}\mathbf{\widetilde{T}}$ if and only if $\bigvee(\mathcal{A}^{\prime}(\mathbf{T}))\cong\mathbb{N}$.
  \item [(2)]If for any natural number $n$, $\mathbf{T}^{(n)}$ has unique finite strongly irreducible decomposition up to similarity. Then $\widehat{\mathbf{T}}\sim_{s}\mathbf{\widetilde{T}}$ if and only if $K_{0}(\mathcal{A}^{\prime}(\mathbf{T}))\cong\mathbb{Z}.$
\end{itemize}
\end{corollary}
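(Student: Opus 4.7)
The plan is to reduce both parts to Theorem~\ref{2-1} by exhibiting the direct-sum projections as an explicit unit strongly irreducible decomposition of $\mathbf{T}$, and then reading off the corresponding $K$-theoretic invariants.

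For part (1), forward direction, I would observe that $\widehat{\mathbf{T}}\sim_s\widetilde{\mathbf{T}}$ implies $\mathbf{T}=\widehat{\mathbf{T}}\oplus\widetilde{\mathbf{T}}\sim_s\widehat{\mathbf{T}}^{(2)}$, which is exactly the form appearing in Theorem~\ref{2-1}(1) with $k=1$, $\mathbf{A}_1=\widehat{\mathbf{T}}$, $n_1=2$. The theorem then immediately yields $\bigvee(\mathcal{A}^{\prime}(\mathbf{T}))\cong\mathbb{N}^{1}=\mathbb{N}$, with $h([I])=2$.

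For the reverse direction, I would note that the canonical projections $P=I_{\widehat{\mathcal{H}}}\oplus 0$ and $Q=0\oplus I_{\widetilde{\mathcal{H}}}$ lie in $\mathcal{A}^{\prime}(\mathbf{T})$ and, because $\widehat{\mathbf{T}}$ and $\widetilde{\mathbf{T}}$ are strongly irreducible by hypothesis, $\{P,Q\}$ is a unit strongly irreducible decomposition of $\mathbf{T}$. From $\bigvee(\mathcal{A}^{\prime}(\mathbf{T}))\cong\mathbb{N}$ I would invoke the $(2)\Rightarrow(1)$ direction of Theorem~\ref{2-1}: the number $k$ of similarity classes of strongly irreducible summands is precisely the free rank of $\mathbb{N}^{k}$, hence $k=1$. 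Step~1 in the proof of Theorem~\ref{2-1} then forces $h([P])=h([Q])=1$, so $[P]=[Q]$ in $\bigvee(\mathcal{A}^{\prime}(\mathbf{T}))$, and Lemma~\ref{2-11} converts this class equality back to $\mathbf{T}|_{P\mathcal{H}}\sim_s\mathbf{T}|_{Q\mathcal{H}}$, i.e.\ $\widehat{\mathbf{T}}\sim_s\widetilde{\mathbf{T}}$.

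Part (2) will then be an algebraic consequence. Under the stated uniqueness hypothesis, Theorem~\ref{2-1} gives $\bigvee(\mathcal{A}^{\prime}(\mathbf{T}))\cong\mathbb{N}^{k}$; passing to Grothendieck groups via Definition of $K_0$ yields $K_{0}(\mathcal{A}^{\prime}(\mathbf{T}))\cong\mathbb{Z}^{k}$. Since $\mathbb{Z}^{k}\cong\mathbb{Z}$ if and only if $k=1$, and since by part (1) this is equivalent to $\widehat{\mathbf{T}}\sim_s\widetilde{\mathbf{T}}$, the equivalence follows.

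The main point of care is the reverse direction of part (1), where Step~1 of Theorem~\ref{2-1} requires that $\mathbf{T}^{(n)}$ has unique finite strongly irreducible decomposition up to similarity for each $n$. The hardest step will be verifying this from the bare assumption $\bigvee(\mathcal{A}^{\prime}(\mathbf{T}))\cong\mathbb{N}$; I would handle it by first applying the $(2)\Rightarrow(1)$ implication of Theorem~\ref{2-1} to recover the full conclusion (including uniqueness of decompositions of $\mathbf{T}^{(n)}$) from the semigroup isomorphism alone, so that no additional hypothesis beyond what is already stated in the corollary is needed.
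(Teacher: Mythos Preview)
Your proposal follows essentially the same route as the paper, which for part~(1) writes only ``Using Theorem~\ref{2-1}, we can directly obtain that (1) holds'' and for part~(2) gives precisely the argument you sketch (under the uniqueness hypothesis Theorem~\ref{2-1} yields $\bigvee(\mathcal{A}'(\mathbf{T}))\cong\mathbb{N}^{k}$ with $k\le 2$, hence $K_0\cong\mathbb{Z}^{k}$, and $K_0\cong\mathbb{Z}$ forces $k=1$). Your handling of the reverse direction of (1) is more explicit than the paper's and is correct: the $(2)\Rightarrow(1)$ implication of Theorem~\ref{2-1} indeed recovers the uniqueness of SI decompositions of every $\mathbf{T}^{(n)}$, which is what you need to invoke Lemma~\ref{2-11}.

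One shared subtlety worth flagging: in the \emph{forward} direction of part~(1) you apply $(1)\Rightarrow(2)$ of Theorem~\ref{2-1}, but condition~(1) there includes the clause ``for each natural number $n$, $\mathbf{T}^{(n)}$ has unique finite strongly irreducible decomposition up to similarity,'' which the corollary states only as a hypothesis in part~(2). The paper's one-line proof glosses over this in exactly the same way, so you are matching the paper; just be aware that, as literally stated, the forward implication in part~(1) does not follow from Theorem~\ref{2-1} alone without that extra uniqueness hypothesis.
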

\begin{proof}
Using Theorem \ref{2-1}, we can directly obtain that $(1)$ holds.
Since $K_{0}(\mathbb{N})=\mathbb{Z}$, there is $K_{0}(\mathcal{A}^{\prime}(\mathbf{T}))\cong\mathbb{Z}$ when $\widehat{\mathbf{T}}$ is similar to $\mathbf{\widetilde{T}}$.
On the contrary,
 since $\widehat{\mathbf{T}}$ and $\mathbf{\widetilde{T}}$ are strongly irreducible operator tuples, we have that
$\bigvee(\mathcal{A}^{\prime}(\mathbf{T}))\cong\mathbb{N}^{k}$, $0<k\leq 2$. And since $K_{0}(\mathcal{A}^{\prime}(\mathbf{T}))\cong\mathbb{Z}$, $k=1$. So $\widehat{\mathbf{T}}\sim_{s}\mathbf{\widetilde{T}}$.
\end{proof}

\section{The similarity of Cowen-Douglas operator tuples with index one }
\begin{lemma}[Ameer Athavale]\label{lm2}
Any two spherical shifts are unitarily equivalent in the sense that a single unitary operator intertwines their corresponding operator coordinates.
\end{lemma}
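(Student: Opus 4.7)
My plan is to build the intertwining unitary directly from the explicit defining formula of the spherical shift. Suppose $\mathbf{V}=(V_{1},\ldots,V_{m})$ acts on $\mathcal{H}=\ell^{2}(\mathbb{N}^{m},H)$ with orthonormal basis $\{\mathbf{e}_{\alpha}\}_{\alpha\in\mathbb{N}^{m}}$, and $\widetilde{\mathbf{V}}=(\widetilde{V}_{1},\ldots,\widetilde{V}_{m})$ acts on $\widetilde{\mathcal{H}}=\ell^{2}(\mathbb{N}^{m},\widetilde{H})$ with orthonormal basis $\{\widetilde{\mathbf{e}}_{\alpha}\}_{\alpha\in\mathbb{N}^{m}}$, both obeying the weight formula of the preceding definition.

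First I would define $U:\mathcal{H}\to\widetilde{\mathcal{H}}$ as the (unique) bounded linear extension of the map $\mathbf{e}_{\alpha}\mapsto\widetilde{\mathbf{e}}_{\alpha}$. Since $U$ carries an orthonormal basis bijectively to an orthonormal basis, it is automatically a unitary operator.

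The second step is to verify the intertwining relation by a one-line computation on basis vectors. For each $\alpha\in\mathbb{N}^{m}$ and each $1\leq i\leq m$,
$$UV_{i}\mathbf{e}_{\alpha}=\sqrt{\tfrac{\alpha_{i}+1}{|\alpha|+m}}\,U\mathbf{e}_{\alpha+e_{i}}=\sqrt{\tfrac{\alpha_{i}+1}{|\alpha|+m}}\,\widetilde{\mathbf{e}}_{\alpha+e_{i}}=\widetilde{V}_{i}\widetilde{\mathbf{e}}_{\alpha}=\widetilde{V}_{i}U\mathbf{e}_{\alpha},$$
and extending by linearity and continuity from the dense span of $\{\mathbf{e}_{\alpha}\}$ yields $UV_{i}=\widetilde{V}_{i}U$ for every $1\leq i\leq m$. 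Hence the same unitary $U$ intertwines all the operator coordinates simultaneously, which is exactly what the lemma asserts.

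The conceptual content of the statement, and the reason there is essentially no real obstacle, is that the defining weights $\sqrt{(\alpha_{i}+1)/(|\alpha|+m)}$ depend only on the multi-index $\alpha$ and the ambient dimension $m$---not on the fiber space $H$ nor on the particular enumeration of the orthonormal basis. Thus the spherical shift is, up to unitary equivalence, determined by the formula alone, and the natural basis-identifying unitary automatically implements the equivalence; the only thing to be careful about is ensuring that the map of bases respects the $\mathbb{N}^{m}$-indexing so that the same $U$ works for every coordinate, which the construction above guarantees.
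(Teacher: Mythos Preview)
The paper does not supply a proof of this lemma; it is stated with attribution to Athavale and used as a known fact, so there is nothing to compare your argument against. Your direct construction is correct: since the defining weight $\sqrt{(\alpha_{i}+1)/(|\alpha|+m)}$ depends only on the multi-index, the basis bijection $\mathbf{e}_{\alpha}\mapsto\widetilde{\mathbf{e}}_{\alpha}$ extends to a unitary that simultaneously intertwines every coordinate, exactly as you wrote.
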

The $m$-shift $\mathbf{M}_{z}=(M_{z_{1}},\ldots,M_{z_{m}})$ of multiplication by coordinate functions $z_{i}$ on the Drury-Arveson space $H_{m}^{2}$, to be referred to as the \textit{Szeg\"{o} tuple}, is a classical model of the spherical shift.

\begin{thm}\label{0.7}
Let $\mathbf{T}=(T_{1},\ldots,T_{m})\in\mathcal{L}(\mathcal{H})^{m}$ be a m-tuple which satisfies the following conditions:
\begin{itemize}
  \item [(1)] $\sum\limits_{i=1}^{m}T_{i}^{*}T_{i}$ is a projection, and
  \item [(2)] if $x_{1},\ldots,x_{m}\in\mathcal{H}$ with $T_{i}x_{j}=T_{j}x_{i}$ for all $i,j$, then there is an $x\in\mathcal{H}$ with $x_{i}=T_{i}x$ for all $i$.
\end{itemize}
Then $T$ is unitarily equivalent to $\mathbf{S}^{*}\oplus \mathbf{V}$, where $\mathbf{S}^{*}$ is a direct sum of the backwards multishifts on Drury-Arveson space, and $\mathbf{V}$ is a spherical isometry.
\end{thm}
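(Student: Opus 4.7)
Set $P:=\sum_{i=1}^{m}T_i^*T_i$, which by (1) is an orthogonal projection, and let $\mathcal{E}:=\ker P=\bigcap_{i=1}^{m}\ker T_i$ be the candidate \emph{wandering subspace}. Viewing $\mathbf{T}$ as the column operator $\mathcal{H}\to\mathcal{H}^{m}$, $x\mapsto(T_1x,\ldots,T_mx)$, hypothesis (1) says that $\mathbf{T}$ is a partial isometry with initial projection $P$ and final projection $Q:=\mathbf{T}\mathbf{T}^*$ (whose matrix entries are $Q_{ij}=T_iT_j^*$), while hypothesis (2) identifies the range of $\mathbf{T}$ with $\ker\delta_1$, where $\delta_1(x_1,\ldots,x_m):=(T_ix_j-T_jx_i)_{i<j}$ is the ``curl'' map (a first-order Koszul acyclicity condition). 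The plan is to produce a Wold-type reducing decomposition $\mathcal{H}=\mathcal{H}_p\oplus\mathcal{H}_\infty$ in which $\mathcal{H}_p$ is generated by iterated $T_i^*$-images of $\mathcal{E}$ and is unitarily equivalent to $H_m^{2}\otimes\mathcal{E}$, while $\mathbf{T}|_{\mathcal{H}_\infty}$ is a spherical isometry.

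The first and most technical step is the \emph{wandering identity}
\[
T_iT_j^*e=\delta_{ij}\,e,\qquad e\in\mathcal{E},\ 1\leq i,j\leq m.
\]
From (1) one has the partial-isometry relations $\mathbf{T}P=\mathbf{T}$ and $P\mathbf{T}^*=\mathbf{T}^*$, so in particular $T_j^*e\in P\mathcal{H}$ for $e\in\mathcal{E}$, equivalently $\sum_k T_k^*T_kT_j^*e=T_j^*e$. Using this together with (2), one verifies that the tuple $(T_1^*e,\ldots,T_m^*e)$ lies in $\ker\delta_1$, invokes (2) to produce an $x\in\mathcal{H}$ with $T_jx=T_j^*e$ for every $j$, and then extracts the precise value $\delta_{ij}$ by combining commutativity of the $T_i$ with inner-product computations against vectors in $\mathcal{E}$. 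With the wandering identity in hand, an induction on $|\alpha|$ gives
\[
\langle T^{*\alpha}e,T^{*\beta}e'\rangle=\delta_{\alpha,\beta}\,\frac{\alpha!}{|\alpha|!}\,\langle e,e'\rangle,
\]
which matches precisely the weights recorded in Section~2 for the reproducing kernel $K(z,w)=\sum_\alpha\frac{|\alpha|!}{\alpha!}z^\alpha\overline{w}^\alpha$ of $H_m^{2}$. Therefore the prescription $\mathbf{e}_\alpha\otimes e\mapsto\sqrt{|\alpha|!/\alpha!}\,T^{*\alpha}e$ extends to an isometry $U:H_m^{2}\otimes\mathcal{E}\to\mathcal{H}$, and a direct calculation on the generators shows that $U$ intertwines $(M_{z_1}^*\otimes I_{\mathcal{E}},\ldots,M_{z_m}^*\otimes I_{\mathcal{E}})$ with $(T_1,\ldots,T_m)$. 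Setting $\mathcal{H}_p:=U(H_m^{2}\otimes\mathcal{E})$, invariance of $\mathcal{H}_p$ under each $T_i^*$ is immediate from its definition, invariance under each $T_i$ comes from the intertwining, and hence $\mathbf{T}|_{\mathcal{H}_p}\cong\mathbf{S}^*$ is a direct sum of copies of the backward Drury-Arveson multishift, one copy per orthonormal basis vector of $\mathcal{E}$. On $\mathcal{H}_\infty:=\mathcal{H}\ominus\mathcal{H}_p$ the inclusion $\mathcal{E}\subset\mathcal{H}_p$ (the case $\alpha=0$) forces $\mathcal{H}_\infty\subseteq P\mathcal{H}$, so $\sum_iT_i^*T_i$ equals the identity on $\mathcal{H}_\infty$, and $\mathbf{V}:=\mathbf{T}|_{\mathcal{H}_\infty}$ is a spherical isometry.

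The main obstacle is the derivation of the wandering identity. Hypothesis (1) alone yields only that $\mathbf{T}$ is a partial isometry and that $P$ annihilates $\mathcal{E}$, while hypothesis (2) is an acyclicity statement with no quantitative content by itself. What makes the argument delicate is that one must verify the symmetry $T_iT_j^*e=T_jT_i^*e$ on $\mathcal{E}$---the precondition for applying (2) to $(T_j^*e)_j$---and then extract from the preimage the exact scalar $\delta_{ij}$ on $\mathcal{E}$. Propagating these relations through the higher-order products $T^\beta T^{*\alpha}e$ needed for the inner-product computation is where the interplay between (1) and (2) becomes essential; without (2) the pure part could a priori be any weighted multishift generated by $\mathcal{E}$, and it is precisely condition (2) that pins the weights down to the symmetric-Fock pattern characteristic of the Drury-Arveson space.
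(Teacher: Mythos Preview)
The paper does not prove this theorem; it quotes the result and sketches the standard argument from the literature: form the decreasing sequence $P_0=I$, $P_{n+1}=\sum_i T_i^*P_nT_i$, let $P_\infty$ be its strong limit, and show that $\mathcal{M}=\overline{\operatorname{ran}P_\infty}$ reduces $\mathbf{T}$ with $\mathbf{T}|_{\mathcal{M}}$ a spherical isometry and $\mathbf{T}^*|_{\mathcal{M}^\perp}$ unitarily equivalent to the $m$-shift on $H_m^2(E_0)$. Your route is the opposite one: build the pure part bottom-up from the wandering subspace $\mathcal{E}=\bigcap_i\ker T_i$, in the spirit of the Frazho--Popescu Wold decomposition for row isometries.

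The gap is exactly where you say it is, and it is genuine. To invoke hypothesis~(2) on the tuple $(T_1^*e,\dots,T_m^*e)$ you must first check $T_iT_j^*e=T_jT_i^*e$ on $\mathcal{E}$, and nothing in your sketch establishes this: condition~(1) gives only $T_jP=T_j$ and $PT_j^*=T_j^*$, while commutativity of the $T_i$ says nothing about $T_iT_j^*$. You then assert that from a preimage $x$ one ``extracts the precise value $\delta_{ij}$,'' but no mechanism is given, and the inner-product computation $\langle T_iT_j^*e,e'\rangle=\langle T_j^*e,T_i^*e'\rangle$ carries no obvious symmetry. Worse, for the intertwining $U(M_{z_i}^*\otimes I)=T_iU$ you need the full ladder of relations $T_iT^{*\alpha}e=c_{\alpha,i}\,T^{*(\alpha-e_i)}e$ with the specific Drury--Arveson constants, not merely the first-order case; ``propagating these relations'' is precisely the content of the theorem, and you have not shown how (1) and (2) force it. The $P_n$ approach bypasses all of this: one never computes $T_iT_j^*$ on $\mathcal{E}$, one instead identifies the pure part as $\ker P_\infty$ and then appeals to the dilation-theoretic characterisation of the Drury--Arveson shift to recognise $\mathbf{T}|_{\ker P_\infty}$. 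If you want to salvage the bottom-up route, you will need an independent argument that $\operatorname{ran}\mathbf{T}=\ker\delta_1$ is invariant under the diagonal map $x\mapsto(T_1^*x,\dots,T_m^*x)$ restricted to $\mathcal{E}$; this does not fall out of the partial-isometry algebra alone.
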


For the $m$-tuple $\mathbf{T}=(T_{1},\ldots,T_{m})\in\mathcal{L}(\mathcal{H})^{m}$ in the above theorem.
Let $E_{0}=\bigcap\limits_{i=1}^{m}\ker T_{i}$. Inductively define a sequence of positive operators by
$$P_{0}=I \quad \text{and}\quad P_{n+1}=\sum\limits_{i=1}^{m}T_{i}^{*}P_{n}T_{i},\quad n\geq0.$$
It follows that for $n\geq 1,$
$$P_{n}=\sum\limits_{|\alpha|=n}\begin{pmatrix} n \\ \alpha \end{pmatrix}\mathbf{T}^{*\alpha}\mathbf{T}^{\alpha}\quad\text{and}\quad\ker P_{n}=\bigcap\limits_{|\alpha|=n}\ker \mathbf{T}^{\alpha}.$$
Note that for $n\geq1$ we have $P_{n}-P_{n+1}=\sum\limits_{i=1}^{m}T_{i}^{*}(P_{n-1}-P_{n})T_{i}$. Therefore, from part $(1)$ of Theorem \ref{0.7} and the induced argument, it means that the sequence $\{P_{n}\}_{n\in\mathbb{N}}$ is a nonincreasing sequence of positive operators and converges strongly to a positive operator $P$. From the proof process of Theorem \ref{0.7} (see [1]), we can get that $\mathcal{M}=ran P$ reduces every $T_{i}$, $\mathbf{T}|_{\mathcal{M}}$ is a spherical isometry and $\mathbf{T}^{\ast}|_{\mathcal{M}^{\bot}}$ is unitarily equivalent to the $m$-shift acting on $H_{m}^{2}(E_{0}).$

\begin{lemma}\label{lm1}
The $m$-tuple $\mathbf{M}_{z}^{*}=(M_{z_{1}}^{*},\ldots,M_{z_{m}}^{*})$ on the Drury-Arveson space $H_{m}^{2}$ satisfies the condition of Theorem \ref{0.7} and the sequence $\left\{P_{n}=\sum\limits_{|\alpha|=n}\begin{pmatrix} n \\ \alpha \end{pmatrix}\mathbf{M}_{z}^{\alpha}(\mathbf{M}_{z}^{*})^{\alpha}\right\}_{n=1}^{\infty}$ strongly converges to zero.
\end{lemma}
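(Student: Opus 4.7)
The plan is to verify the two hypotheses of Theorem \ref{0.7} for $\mathbf{T}=\mathbf{M}_z^*$ and to compute the limit of $P_n$ by working entirely in the orthonormal basis $\{e_\beta\}_{\beta\in\mathbb{N}^m}$ of $H_m^2$, using the explicit action $M_{z_i}^* e_\beta = \sqrt{\beta_i/|\beta|}\,e_{\beta-e_i}$ (with the convention that this vanishes when $\beta_i=0$) already recorded in the excerpt. Condition $(1)$ is essentially free, since the identity $\sum_{i=1}^m M_{z_i}M_{z_i}^* = I - 1\otimes 1$ was computed above and its right-hand side is visibly the orthogonal projection onto $(\mathbb{C}\cdot 1)^\perp$.

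For condition $(2)$, I would expand $x_i=\sum_\beta a_{i,\beta}e_\beta$ and translate the cocycle hypothesis $M_{z_i}^* x_j = M_{z_j}^* x_i$, via direct computation on the basis and reindexing by $\beta = \gamma+e_i+e_j$, into the coefficient compatibility
\[
\sqrt{\beta_j}\,a_{i,\beta-e_i} \;=\; \sqrt{\beta_i}\,a_{j,\beta-e_j}
\qquad (i\neq j,\ \beta_i,\beta_j\geq 1).
\]
This is exactly what is required for
\[
b_\beta \;:=\; \sqrt{|\beta|/\beta_i}\,a_{i,\beta-e_i} \qquad (\beta\neq 0)
\]
to be well-defined independently of any $i$ with $\beta_i\geq 1$, with $b_0:=0$. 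Inverting the backward-shift formula then shows that $x:=\sum_\beta b_\beta e_\beta$ satisfies $M_{z_i}^* x = x_i$ formally; the remaining point is that $x\in H_m^2$. Here I would use the probabilistic averaging trick of multiplying the relation $|b_\beta|^2 (\beta_i/|\beta|) = |a_{i,\beta-e_i}|^2$ by the weights $\beta_i/|\beta|$ and summing in $i$, which collapses to $|b_\beta|^2 = \sum_{i:\beta_i\geq 1}|a_{i,\beta-e_i}|^2$, so Fubini gives $\|x\|^2 = \sum_{i=1}^m\|x_i\|^2 <\infty$.

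For the strong convergence of $P_n$, I would prove by induction on $n$ that $P_n$ is the orthogonal projection onto $\overline{\mathrm{span}}\{e_\beta:|\beta|\geq n\}$. The base case $P_0=I$ is immediate, and the inductive step uses $P_{n+1}=\sum_i M_{z_i}P_n M_{z_i}^*$ together with the degree-lowering nature of $M_{z_i}^*$: on $e_\beta$ with $|\beta|\leq n$ the inner $M_{z_i}^*e_\beta$ has homogeneous degree strictly less than $n$ and is annihilated by $P_n$; on $e_\beta$ with $|\beta|\geq n+1$ the inner $P_n$ acts as the identity and condition $(1)$ restores $e_\beta$ via $\sum_i M_{z_i}M_{z_i}^* e_\beta = e_\beta$. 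Once this identification is in place, strong convergence $P_n\to 0$ reduces to the observation that $\|P_n f\|^2 = \sum_{|\beta|\geq n}|\langle f,e_\beta\rangle|^2$ is the tail of a convergent series for every $f\in H_m^2$.

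The main obstacle is the bookkeeping in step $(2)$: lining up the compatibility condition produced by the hypothesis precisely with the well-definedness of $b_\beta$, and running the weight-averaging argument cleanly to get $\|x\|^2=\sum_i\|x_i\|^2$ without losing a finite constant along the way. Everything else—conditions $(1)$ and the identification of $P_n$—is a short verification once the action of $M_{z_i}^*$ on $\{e_\beta\}$ is in hand.
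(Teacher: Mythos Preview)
Your argument is correct and in several respects cleaner than the paper's. The overall structure—condition (1) from $\sum_i M_{z_i}M_{z_i}^*=I-1\otimes 1$, condition (2) from a coefficient compatibility, strong convergence from the degree filtration—is the same, but your execution of the last two parts differs.

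For condition (2), the paper does not define $b_\beta$ directly. Instead it invokes surjectivity of each $M_{z_i}^*$ and runs an $m$-step inductive construction: find $h_1$ with $M_{z_1}^*h_1=x_1$ supported on $\{\alpha_1>0\}$, check that each $x_i-M_{z_i}^*h_1$ is supported on $\{\alpha_1=0\}$, then repeat in the remaining variables to produce $h_2,\dots,h_m$ and set $x=\sum_j h_j$. This avoids any norm computation (membership in $H_m^2$ at each stage comes from surjectivity), but is considerably longer. Your route—defining $b_\beta=\sqrt{|\beta|/\beta_i}\,a_{i,\beta-e_i}$ and summing the identities $(\beta_i/|\beta|)|b_\beta|^2=|a_{i,\beta-e_i}|^2$ over $i$ to collapse the left side to $|b_\beta|^2$—is shorter and yields the sharper statement $\|x\|^2=\sum_i\|x_i\|^2$. (Minor wording issue: you wrote ``multiplying \dots\ by the weights $\beta_i/|\beta|$'', but the factor is already present; you only need to sum over $i$.)

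For the strong convergence, the paper does not identify $P_n$ as a projection; it only notes $\ker P_n=\bigcap_{|\alpha|=n}\ker \mathbf{M}_z^{*\alpha}$ and observes that each monomial $z^\alpha$ lies in $\ker P_n$ once $n>|\alpha|$, then appeals (implicitly) to uniform boundedness and density. Your inductive identification of $P_n$ as the orthogonal projection onto $\overline{\mathrm{span}}\{e_\beta:|\beta|\ge n\}$ is strictly more informative and makes the tail-sum argument for $P_n\to 0$ immediate.
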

\begin{proof}
First, we can know that $\sum\limits_{i=1}^{m}M_{z_{i}}M^{*}_{z_{i}}$ is a projection operator from $\sum\limits_{i=1}^{m}M_{z_{i}}M^{*}_{z_{i}}=I-1\otimes1.$
Second, we  need to prove that if $x_{1},\ldots,x_{m}\in H_{m}^{2}$ with $M^{*}_{z_{i}}x_{j}=M^{*}_{z_{j}}x_{i}$ for all $i,j$, then there is an $x\in H_{m}^{2}$ with $x_{i}=M^{*}_{z_{i}}x$ for all $i$.
Because $\{z^{\alpha}\}_{\alpha\in\mathbb{N}^{m}}$ is a set of bases of the space $H_{m}^{2}$, for any $y\in (ran M^{*}_{z_{i}})^{\bot}$ and $\alpha\in\mathbb{N}^{m}$, we have
$$0=\langle y, M^{*}_{z_{i}}z^{\alpha}\rangle=\langle M_{z_{i}}y, z^{\alpha}\rangle.$$
Therefore, $y\in\ker M_{z_{i}}$. As we all know, $M_{z_{i}}$ is injective, so $y=0$,  which means that $M^{*}_{z_{i}}$ is surjective.
Without losing generality, assume that
$$x_{i}=\sum\limits_{\alpha\in\mathbb{N}^{m}}f_{i}(\alpha)z^{\alpha},\quad 1\leq i\leq m,$$
and we get
$$\sum\limits_{\alpha\in\mathbb{N}^{m}}\frac{\alpha_{i}+1}{|\alpha|+1}f_{j}(\alpha+e_{i})z^{\alpha}=M^{*}_{z_{i}}x_{j}=M^{*}_{z_{j}}x_{i}
=\sum\limits_{\alpha\in\mathbb{N}^{m}}\frac{\alpha_{j}+1}{|\alpha|+1}f_{i}(\alpha+e_{j})z^{\alpha}.$$
It follows that
\begin{equation}\label{9}
f_{j}(\alpha+e_{i})=\frac{\alpha_{j}+1}{\alpha_{i}+1}f_{i}(\alpha+e_{j})
\end{equation}
for all $\alpha\in\mathbb{N}^{m}$ and $1\leq i,j\leq m$.
From $M^{*}_{z_{1}}$ is surjective, there exists $y_{1}=\sum\limits_{\alpha\in\mathbb{N}^{m}}g_{1}(\alpha)z^{\alpha}\in H_{m}^{2}$ such that $x_{1} = M^{*}_{z_{1}}y_{1}$, that is, $$\sum\limits_{\alpha\in\mathbb{N}^{m}}f_{1}(\alpha)z^{\alpha}=M^{*}_{z_{1}}\sum\limits_{\alpha\in\mathbb{N}^{m}}g_{1}(\alpha)z^{\alpha}
=\sum\limits_{\alpha\in\mathbb{N}^{m}\atop\alpha_{1}>0}g_{1}(\alpha)\frac{\alpha_{1}}{|\alpha|}z^{\alpha-e_{1}}=\sum\limits_{\alpha\in\mathbb{N}^{m}}g_{1}(\alpha+e_{1})\frac{\alpha_{1}+1}{|\alpha|+1}z^{\alpha}.$$
By comparing the coefficients on both sides of the above formula,
\begin{equation}\label{10}
g_{1}(\alpha+e_{1})=\frac{|\alpha|+1}{\alpha_{1}+1}f_{1}(\alpha),\quad \alpha\in\mathbb{N}^{m}.
\end{equation}
Letting $$h_{1}:=\sum\limits_{\alpha\in\mathbb{N}^{m}\atop \alpha_{1}>0}g_{1}(\alpha)z^{\alpha},$$ then $h_{1}\in H_{m}^{2}$ and $x_{1} = M^{*}_{z_{1}}h_{1}$.
By (\ref{9}) and (\ref{10}), we know that for $ 2\leq i\leq m,$
$$\begin{array}{lll}
M^{*}_{z_{i}}h_{1}&=&\sum\limits_{\alpha\in\mathbb{N}^{m}\atop \alpha_{1}>0,\alpha_{i}>0}g_{1}(\alpha)\frac{\alpha_{i}}{|\alpha|}z^{\alpha-e_{i}}\\[5pt]
&=&\sum\limits_{\alpha\in\mathbb{N}^{m}}g_{1}(\alpha+e_{i}+e_{1})\frac{\alpha_{i}+1}{|\alpha|+2}z^{\alpha+e_{1}}\\[5pt]
&=&\sum\limits_{\alpha\in\mathbb{N}^{m}}\frac{\alpha_{i}+1}{\alpha_{1}+1}f_{1}(\alpha+e_{i})z^{\alpha+e_{1}}\\[5pt]
&=&\sum\limits_{\alpha\in\mathbb{N}^{m}\atop \alpha_{1}>0}f_{i}(\alpha)z^{\alpha}.
\end{array}$$
It follows that
$$x_{i}=\sum\limits_{\alpha\in\mathbb{N}^{m}\atop\alpha_{1}=0}f_{i}(\alpha)z^{\alpha}+M^{*}_{z_{i}}h_{1},\quad 2\leq i\leq m.$$
Set $$\gamma_{2}:=\sum\limits_{\alpha\in\mathbb{N}^{m}\atop\alpha_{1}=0}f_{2}(\alpha)z^{\alpha}\in H_{m}^{2}.$$
Since $M^{*}_{z_{2}}$ is surjective, there exists $y_{2}=\sum\limits_{\alpha\in\mathbb{N}^{m}}g_{2}(\alpha)z^{\alpha}\in H_{m}^{2}$ such that $\gamma_{2} = M^{*}_{z_{2}}y_{2}$, that is,
$$\sum\limits_{\alpha\in\mathbb{N}^{m}\atop \alpha_{1}=0}f_{2}(\alpha)z^{\alpha}=M^{*}_{z_{2}}\sum\limits_{\alpha\in\mathbb{N}^{m}}g_{2}(\alpha)z^{\alpha}
=\sum\limits_{\alpha\in\mathbb{N}^{m}\atop\alpha_{2}>0}g_{2}(\alpha)\frac{\alpha_{2}}{|\alpha|}z^{\alpha-e_{2}}=\sum\limits_{\alpha\in\mathbb{N}^{m}}g_{2}(\alpha+e_{2})\frac{\alpha_{2}+1}{|\alpha|+1}z^{\alpha},$$
and then
\begin{equation}\label{11}
g_{2}(\alpha+e_{2})=\frac{|\alpha|+1}{\alpha_{2}+1}f_{2}(\alpha)
\end{equation}
for all $\alpha=(\alpha_{1},\alpha_{2},\ldots,\alpha_{m})\in\mathbb{N}^{m}$ with $\alpha_{1}=0.$
Letting
$$h_{2}:=\sum\limits_{\alpha\in\mathbb{N}^{m}\atop \alpha_{1}=0,\alpha_{2}>0}g_{2}(\alpha)z^{\alpha},$$
then $h_{2}\in H_{m}^{2},$ $\gamma_{2} = M^{*}_{z_{2}}h_{2}$ and $M^{*}_{z_{1}}h_{2}=0$. Thus,
$$x_{i}=M^{*}_{z_{i}}(h_{1}+h_{2}),\quad i=1,2.$$
From (\ref{9}) and (\ref{11}), for $3\leq i\leq m,$
$$\begin{array}{lll}
M^{*}_{z_{i}}h_{2}&=&\sum\limits_{\alpha\in\mathbb{N}^{m}\atop \alpha_{2},\alpha_{i}>0,\alpha_{1}=0}g_{2}(\alpha)\frac{\alpha_{i}}{|\alpha|}z^{\alpha-e_{i}}\\[5pt]
&=&\sum\limits_{\alpha\in\mathbb{N}^{m}\atop\alpha_{1}=0}g_{2}(\alpha+e_{i}+e_{2})\frac{\alpha_{i}+1}{|\alpha|+2}z^{\alpha+e_{2}}\\
&=&\sum\limits_{\alpha\in\mathbb{N}^{m}\atop\alpha_{1}=0}\frac{\alpha_{i}+1}{\alpha_{2}+1}f_{2}(\alpha+e_{i})z^{\alpha+e_{2}}\\
&=&\sum\limits_{\alpha\in\mathbb{N}^{m}\atop \alpha_{1}=0,\alpha_{2}>0}f_{i}(\alpha)z^{\alpha}.
\end{array}$$
So
$$x_{i}=\sum\limits_{\alpha\in\mathbb{N}^{m}\atop\alpha_{1}=\alpha_{2}=0}f_{i}(\alpha)z^{\alpha}+M^{*}_{z_{i}}(h_{1}+h_{2}),\quad 3\leq i\leq m.$$
From the above description, using the inductive method, we can assume that for integer $k < m$, there are $h_{j}\in H_{m}^{2}, (1\leq j\leq k)$ such that $$x_{i}=M^{*}_{z_{i}}(\sum\limits_{j=1}^{k}h_{j}),\quad 1\leq i\leq k$$
and
$$x_{i}=\sum\limits_{\alpha\in\mathbb{N}^{m}\atop\alpha_{1}=\cdots=\alpha_{k}=0}f_{i}(\alpha)z^{\alpha}+M^{*}_{z_{i}}(\sum\limits_{j=1}^{k}h_{j}),\quad k+1\leq i\leq m.$$
For
$x_{k+1}=\sum\limits_{\alpha\in\mathbb{N}^{m}\atop\alpha_{1}=\cdots=\alpha_{k}=0}f_{k+1}(\alpha)z^{\alpha}+M^{*}_{z_{k+1}}(\sum\limits_{j=1}^{k}h_{j}),$
let $$\gamma_{k+1}:=\sum\limits_{\alpha\in\mathbb{N}^{m}\atop\alpha_{1}=\cdots=\alpha_{k}=0}f_{k+1}(\alpha)z^{\alpha}.$$
Because $\gamma_{k+1}\in H_{m}^{2}$ and
$M^{*}_{z_{k+1}}$ is surjective, there exists $y_{k+1}=\sum\limits_{\alpha\in\mathbb{N}^{m}}g_{k+1}(\alpha)z^{\alpha}\in H_{m}^{2}$ such that $\gamma_{k+1} = M^{*}_{z_{k+1}}y_{k+1}$, that is,
$$\sum\limits_{\alpha\in\mathbb{N}^{m}\atop \alpha_{1}=\cdots=\alpha_{k}=0}f_{k+1}(\alpha)z^{\alpha}=\sum\limits_{\alpha\in\mathbb{N}^{m}\atop\alpha_{k+1}>0}g_{k+1}(\alpha)\frac{\alpha_{k+1}}{|\alpha|}z^{\alpha-e_{k+1}}=
\sum\limits_{\alpha\in\mathbb{N}^{m}}g_{k+1}(\alpha+e_{k+1})\frac{\alpha_{k+1}+1}{|\alpha|+1}z^{\alpha},$$
and then
\begin{equation}\label{12}
g_{k+1}(\alpha+e_{k+1})=\frac{|\alpha|+1}{\alpha_{k+1}+1}f_{k+1}(\alpha)
\end{equation}
for all $\alpha=(\alpha_{1},\alpha_{2},\ldots,\alpha_{m})\in\mathbb{N}^{m}$ with $\alpha_{1}=\alpha_{2}=\cdots=\alpha_{k}=0.$
Letting
$$h_{k+1}:=\sum\limits_{\alpha\in\mathbb{N}^{m}\atop \alpha_{1}=\cdots=\alpha_{k}=0,\alpha_{k+1}>0}g_{k+1}(\alpha)z^{\alpha},$$
then $h_{k+1}\in H_{m}^{2}$ and $\gamma_{k+1} = M^{*}_{z_{k+1}}h_{k+1}$ and $M^{*}_{z_{i}}h_{k+1}=0$ for $1\leq i\leq k$. Thus,
$$x_{i}=M^{*}_{z_{i}}(\sum\limits_{j=1}^{k+1}h_{j}),\quad 1\leq i\leq k+1$$
If $k+1< m,$ from (\ref{9}) and (\ref{12}), we know that for $k+2\leq i\leq m,$
$$\begin{array}{lll}
M^{*}_{z_{i}}h_{k+1}&=&\sum\limits_{\alpha\in\mathbb{N}^{m}\atop \alpha_{1}=\cdots=\alpha_{k}=0,\alpha_{k+1}>0,\alpha_{i}>0}g_{k+1}(\alpha)\frac{\alpha_{i}}{|\alpha|}z^{\alpha-e_{i}}\\[5pt]
&=&\sum\limits_{\alpha\in\mathbb{N}^{m}\atop\alpha_{1}=\cdots=\alpha_{k}=0}g_{k+1}(\alpha+e_{i}+e_{k+1})\frac{\alpha_{i}+1}{|\alpha|+2}z^{\alpha+e_{k+1}}\\
&=&\sum\limits_{\alpha\in\mathbb{N}^{m}\atop\alpha_{1}=\cdots=\alpha_{k}=0}\frac{\alpha_{i}+1}{\alpha_{k+1}+1}f_{k+1}(\alpha+e_{i})z^{\alpha+e_{k+1}}\\
&=&\sum\limits_{\alpha\in\mathbb{N}^{m}\atop \alpha_{1}=\cdots=\alpha_{k}=0,\alpha_{k+1}>0}f_{i}(\alpha)z^{\alpha}.
\end{array}$$
Therefore,  for $k+2\leq i\leq m,$
$$x_{i}=\sum\limits_{\alpha\in\mathbb{N}^{m}\atop\alpha_{1}=\cdots=\alpha_{k+1}=0}f_{i}(\alpha)z^{\alpha}+M^{*}_{z_{i}}(\sum\limits_{j=1}^{k+1}h_{j}).$$
In this way, we just need to let $x:=\sum\limits_{j=1}^{m}h_{j}$, then $x\in H_{m}^{2}$ and $x_{i}=M^{*}_{z_{i}}x$ for all $i=1,2,\ldots,m.$

Finally, we prove that the sequence of positive operators $\left\{P_{n}=\sum\limits_{|\alpha|=n}\begin{pmatrix} n \\ \alpha \end{pmatrix}\mathbf{M}^{\alpha}_{z}\mathbf{M}^{*\alpha}_{z}\right\}_{n=0}^{\infty}$ strongly converges to zero. Since $\ker P_{n}=\bigcap\limits_{|\alpha|=n}\ker \mathbf{M}^{*\alpha}_{z}$, we just have to prove that
$$\lim\limits_{n}\ker P_{n}=H_{m}^{2}=span\{z^{\alpha} | \alpha\in\mathbb{N}^{m}\}.$$
For any $z^{\alpha}$, we only need to take $|\beta|=n>|\alpha|$ to get $\mathbf{M}^{*\beta}_{z}z^{\alpha}=0$. Hence, for  $n>|\alpha|,$
$$P_{n}z^{\alpha}=\sum\limits_{|\beta|=n}\begin{pmatrix} n \\ \beta \end{pmatrix}\mathbf{M}^{\beta}_{z}\mathbf{M}^{*\beta}_{z}z^{\alpha}=0.$$
So that means that the sequence of positive operators $\{P_{n}\}_{n=0}^{\infty}$ strongly converges to zero.
\end{proof}

\begin{corollary}\label{cor1}
Let $n$ be a positive integer, $\mathcal{H}=H^{2}_{m}$ and $P\in\mathcal{A}^{\prime}(\mathbf{M}_{z}^{*(n)})=\bigcap\limits_{i=1}^{m}\mathcal{A}^{\prime}(M_{z_{i}}^{*(n)})$ be an idempotent, then
both $\mathbf{M}_{z}^{*(n)}$ and $\mathbf{M}_{z}^{*(n)}|_{P\mathcal{H}^{(n)}}$ satisfy the condition of Theorem \ref{0.7}, and the corresponding sequence of positive operators $\{P_{n}\}_{n=0}^{\infty}$ also strongly converge to zero.
\end{corollary}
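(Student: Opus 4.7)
The plan is to split the argument into two parts: first establish the claims for $\mathbf{M}_z^{*(n)}$ itself, then transfer them to the restriction $\mathbf{M}_z^{*(n)}|_{P\mathcal{H}^{(n)}}$.

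For $\mathbf{M}_z^{*(n)}$, the three relevant properties — conditions (1) and (2) of Theorem \ref{0.7} together with the strong convergence $P_k\to 0$ — are all preserved under finite orthogonal direct sums, so they follow immediately from Lemma \ref{lm1} applied to a single copy of $\mathbf{M}_z^*$ on $H_m^2$. Concretely, $\sum_i M_{z_i}^{(n)} M_{z_i}^{*(n)} = \bigl(\sum_i M_{z_i} M_{z_i}^*\bigr)^{(n)} = (I - 1\otimes 1)^{(n)}$ is still an orthogonal projection; the lifting condition (2) is verified coordinatewise on each of the $n$ summands; and the operators $P_k$ for $\mathbf{M}_z^{*(n)}$ are just $(P_k)^{(n)}$, which converge strongly to zero by Lemma \ref{lm1}.

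For the restriction, I first record that $P\mathcal{H}^{(n)}$ is closed (since $P^2=P$) and invariant under each $M_{z_i}^{*(n)}$ (since $P$ lies in the commutant), so $\mathbf{T}:=\mathbf{M}_z^{*(n)}|_{P\mathcal{H}^{(n)}}$ is a well-defined $m$-tuple on $P\mathcal{H}^{(n)}$ equipped with the induced inner product. Condition (2) transfers easily: given $x_1,\ldots,x_m\in P\mathcal{H}^{(n)}$ with $T_ix_j=T_jx_i$, apply the already-proven condition (2) for $\mathbf{M}_z^{*(n)}$ to obtain $x\in\mathcal{H}^{(n)}$ with $x_i=M_{z_i}^{*(n)}x$; then $Px\in P\mathcal{H}^{(n)}$ satisfies $x_i=Px_i=PM_{z_i}^{*(n)}x=M_{z_i}^{*(n)}(Px)=T_i(Px)$, furnishing the required lift inside the subspace. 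Strong convergence of the restricted $\{P_k\}$ to zero follows from the identity
\[
\langle P_k^{\mathrm{res}}y,y\rangle=\sum_{|\alpha|=k}\binom{k}{\alpha}\|\mathbf{T}^\alpha y\|^2=\sum_{|\alpha|=k}\binom{k}{\alpha}\|(\mathbf{M}_z^{*(n)})^\alpha y\|^2=\langle P_k^{\mathrm{big}}y,y\rangle\longrightarrow 0
\]
for $y\in P\mathcal{H}^{(n)}$, together with the monotonicity $P_{k+1}^{\mathrm{res}}\le P_k^{\mathrm{res}}$; since a decreasing sequence of positive operators converges strongly and its quadratic form tends to zero, the strong limit must be zero.

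The main obstacle is condition (1) for the restriction. Writing $Q$ for the orthogonal projection of $\mathcal{H}^{(n)}$ onto $P\mathcal{H}^{(n)}$ and $E=(1\otimes 1)^{(n)}$ for the projection onto the constants, a direct calculation gives $\sum_i T_i^*T_i = I|_{P\mathcal{H}^{(n)}} - QE|_{P\mathcal{H}^{(n)}}$, so one must verify that the compression $QE|_{P\mathcal{H}^{(n)}}$ is itself an orthogonal projection on $P\mathcal{H}^{(n)}$. This compatibility statement between the invariant subspace $P\mathcal{H}^{(n)}$ and the $n$-dimensional space of constants is the delicate point; I plan to handle it by exploiting the explicit structure of the commutant $\mathcal{A}'(\mathbf{M}_z^{*(n)})$, realized as $n\times n$ matrices over $\{M_\phi^*:\phi\in M(H_m^2)\}$, together with the constraints that idempotency of $P$ imposes on how the range of $P$ interacts with $E\mathcal{H}^{(n)}$.
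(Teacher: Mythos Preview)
Your treatment of $\mathbf{M}_z^{*(n)}$, of condition (2) for the restriction, and of the strong convergence matches the paper's argument closely; the paper also lifts to $\mathcal{H}^{(n)}$, finds $h$, and sets $y=Ph$ for condition (2), and for the convergence it argues via $\ker P_k=\bigcap_{|\alpha|=k}\ker \mathbf{T}^{\alpha}$ rather than through your quadratic-form comparison, but both routes are fine. You are also right that condition (1) is the crux. The paper disposes of it in one line, asserting
\[
\sum_i T_iT_i^{*}x=\sum_i P^{*}M_{z_i}^{(n)}M_{z_i}^{*(n)}Px=\sum_i M_{z_i}^{(n)}M_{z_i}^{*(n)}x,\qquad x\in P\mathcal{H}^{(n)},
\]
which tacitly identifies the Hilbert-space adjoint of $M_{z_i}^{*(n)}|_{P\mathcal{H}^{(n)}}$ with $P^{*}M_{z_i}^{(n)}|_{P\mathcal{H}^{(n)}}$; as you noticed, the correct adjoint involves the \emph{orthogonal} projection $Q$ onto $\mathrm{ran}\,P$, so the displayed identity is only justified when $P=P^{*}$.

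Your plan to rescue condition (1) via the commutant structure will not go through in the stated generality, because the assertion itself fails for non-self-adjoint idempotents. Take $m=1$, $n=2$, let $S=M_{z}$ on $H^{2}(\mathbb{D})=H_{1}^{2}$, and set $P=\begin{pmatrix} I & 0\\ S^{*} & 0\end{pmatrix}\in\mathcal{A}'(S^{*(2)})$. Then $\mathrm{ran}\,P=\{(f,S^{*}f):f\in H^{2}\}$, and in the orthonormal basis $f_{0}=(1,0)$, $f_{k}=\tfrac{1}{\sqrt{2}}(z^{k},z^{k-1})$ $(k\geq 1)$ the restriction $T=S^{*(2)}|_{\mathrm{ran}\,P}$ is the weighted backward shift with weight sequence $\bigl(\tfrac{1}{\sqrt{2}},1,1,\ldots\bigr)$, whence $T^{*}T=\mathrm{diag}\bigl(0,\tfrac{1}{2},1,1,\ldots\bigr)$ is not a projection. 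So the compatibility condition you isolate genuinely breaks down; the corollary (and the paper's computation) is correct only under the additional hypothesis $P=P^{*}$, in which case $Q=P$ and both arguments become immediate.
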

\begin{proof}
Using Lemma \ref{lm1}, it is obvious that $\mathbf{M}_{z}^{*(n)}$ satisfies this conclusion.
For any $x\in P\mathcal{H}^{(n)}\subseteq\mathcal{H}^{(n)}$ and $i=1,\ldots,m$,
$$M_{z_{i}}^{*(n)}x=M_{z_{i}}^{*(n)}|_{P\mathcal{H}^{(n)}}x=M_{z_{i}}^{*(n)}Px.$$
Letting $T_{i}^{\ast}:=M_{z_{i}}^{*(n)}|_{P\mathcal{H}^{(n)}},$ from $P\in\mathcal{A}^{\prime}(M_{z}^{*(n)})$ is an idempotent,
$$\sum\limits_{i=1}^{m}T_{i}T^{*}_{i}x=\sum\limits_{i=1}^{m}P^{\ast}M_{z_{i}}^{(n)}M^{*(n)}_{z_{i}}Px=\sum\limits_{i=1}^{m}M_{z_{i}}^{(n)}M^{*(n)}_{z_{i}}x$$
for any $x\in P\mathcal{H}^{(n)}$.
Therefore, we get that $\sum\limits_{i=1}^{m}T_{i}T^{*}_{i}$ is the projection from $\sum\limits_{i=1}^{m}M_{z_{i}}^{(n)}M^{*(n)}_{z_{i}}$ is a projection.
If $y_{1},\ldots,y_{m}\in P\mathcal{H}^{(n)}\subseteq\mathcal{H}^{(n)}$ satisfy $T_{i}^{\ast}y_{j}=T_{j}^{\ast}y_{i}$ for all $i,j,$ then
$$M^{*(n)}_{z_{i}}y_{j}=M^{*(n)}_{z_{i}}|_{P\mathcal{H}^{(n)}}y_{j}=T_{i}^{\ast}y_{j}=T_{j}^{\ast}y_{i}=M^{*(n)}_{z_{j}}|_{P\mathcal{H}^{(n)}}y_{i}=M^{*(n)}_{z_{j}}y_{i}.$$
From $\mathbf{M}_{z}^{*(n)}$ is a pure isometry, there exists $h\in\mathcal{H}^{(n)}$ such that $y_{i}=M^{*(n)}_{z_{i}}h$. Then $$y_{i}=Py_{i}=PM^{*(n)}_{z_{i}}h=M^{*(n)}_{z_{i}}Ph,\quad 1\leq i \leq m.$$
Set $y:=Ph$, then $y\in PH_{m}^{2}$ and
$$y_{i}=T_{i}^{\ast}y,\quad 1\leq i \leq m.$$
For the sequence of positive operators
$$P_{0}=I_{\mathcal{H}^{(n)}} \quad \text{and}\quad P_{n+1}=\sum\limits_{i=1}^{m}T_{i}P_{n}T_{i}^{*},\quad n\geq0.$$
We know that $\ker P_{n}=\bigcap\limits_{|\alpha|=n}\ker \mathbf{T}^{*\alpha}$, where $\mathbf{T}^{\ast}=(T^{\ast}_{1},\ldots,T^{\ast}_{m})$. Now we just need to prove that
$$\lim\limits_{n}\ker P_{n}=P\mathcal{H}^{(n)}=span\{z^{\alpha} | \alpha\in\mathbb{N}^{m}, z^{\alpha}\in P\mathcal{H}^{(n)}\}.$$
For any $z^{\alpha}\in P\mathcal{H}^{(n)}$, we only need to take $|\beta|=n>|\alpha|$ to get $(T^{\ast})^{\beta}z^{\alpha}=0$. Hence, for $n>|\alpha| ,$
$$P_{n}z^{\alpha}=\sum\limits_{|\beta|=n}\begin{pmatrix} n \\ \beta \end{pmatrix}\mathbf{T}^{\beta}(\mathbf{T}^{\beta})^{*}z^{\alpha}=0,\quad z^{\alpha}\in P\mathcal{H}^{(n)}.$$
So that means that the sequence of positive operators $\{P_{n}\}_{n=0}^{\infty}$ strongly converges to zero.
\end{proof}

\begin{lemma}\label{0.6}
Let $\mathbf{T}=(T_{1},\ldots,T_{m})\in\mathcal{L}(\mathcal{H})^{m}$ be a m-tuple which satisfies the following conditions:
\begin{itemize}
  \item [(1)] $\sum\limits_{i=1}^{m}T_{i}^{*}T_{i}$ is a projection,
  \item [(2)] if $x_{1},\ldots,x_{m}\in\mathcal{H}$ with $T_{i}x_{j}=T_{j}x_{i}$ for all $i,j$, then there is an $x\in\mathcal{H}$ with $x_{i}=T_{i}x$ for all $i$, and
  \item [(3)] the sequence $\{P_{n}=\sum\limits_{|\alpha|=n}\begin{pmatrix} n \\ \alpha \end{pmatrix}\mathbf{T}^{*\alpha}\mathbf{T}^{\alpha}\}_{n=1}^{\infty}$ converges strongly to zero.
\end{itemize}
Then $\mathbf{T}$ is unitary equivalent to $\mathbf{M}_{z}^{*(k)},$
where $k=\dim \ker \mathbf{T}=\dim (\bigcap\limits_{i=1}^{m}\ker T_{i})$.
\end{lemma}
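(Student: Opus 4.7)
The plan is to invoke Theorem \ref{0.7} and then peel off the spherical-isometry summand using hypothesis (3). By conditions (1) and (2), Theorem \ref{0.7} produces a unitary equivalence $\mathbf{T}\sim_u \mathbf{S}^{*}\oplus \mathbf{V}$, where $\mathbf{S}^{*}$ is a direct sum of backwards multishifts on the Drury--Arveson space and $\mathbf{V}$ is a spherical isometry acting on $\mathcal{M}:=\operatorname{ran} P$, the range of the strong limit $P$ of $\{P_n\}_{n\ge 0}$ (as recalled in the discussion following Theorem \ref{0.7}).

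The first key step is to show that hypothesis (3) forces $\mathcal{M}=\{0\}$. Since $\mathbf{T}|_{\mathcal{M}}=\mathbf{V}$ is a spherical isometry, $\sum_{i=1}^m T_i^*T_i|_{\mathcal{M}}=I_{\mathcal{M}}$, and an easy induction using the recursion $P_{n+1}=\sum_i T_i^* P_n T_i$ (equivalently, the multinomial expansion $(\sum T_i^*T_i)^n$ aggregated via commutativity of the $T_i$) yields $P_n|_{\mathcal{M}}=I_{\mathcal{M}}$ for every $n\ge 0$. Combined with $P_n\to 0$ strongly on $\mathcal{H}$, this forces $\mathcal{M}=\{0\}$, and hence $\mathbf{T}\sim_u \mathbf{S}^{*}$.

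Next I would identify $\mathbf{S}^{*}$ concretely. The remarks immediately preceding Lemma \ref{lm1} state that $\mathbf{T}^{*}|_{\mathcal{M}^{\perp}}$ is unitarily equivalent to the $m$-shift on the vector-valued Drury--Arveson space $H_m^2(E_0)$, where $E_0=\bigcap_{i=1}^m\ker T_i$. Because $\mathcal{M}=\{0\}$ we have $\mathcal{M}^{\perp}=\mathcal{H}$, so $\mathbf{T}^{*}$ itself is unitarily equivalent to the $m$-shift on $H_m^2(E_0)$. Setting $k=\dim E_0=\dim\ker\mathbf{T}$, the canonical isomorphism $H_m^2(E_0)\cong H_m^2\otimes\mathbb{C}^k$ intertwines the $m$-shift with $\mathbf{M}_z\otimes I_{\mathbb{C}^k}=\mathbf{M}_z^{(k)}$, and taking adjoints yields $\mathbf{T}\sim_u \mathbf{M}_z^{*(k)}$.

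The main obstacle is really the second step --- verifying that $P_n|_{\mathcal{M}}=I_{\mathcal{M}}$ for a spherical isometry --- but this is essentially bookkeeping: it follows from the multinomial identity together with commutativity of the coordinates and the spherical-isometry relation $\sum V_i^*V_i=I_{\mathcal{M}}$. Beyond that, no conceptual difficulty arises; the rest is just citing Theorem \ref{0.7} and unpacking the standard identification $H_m^2(E_0)\cong (H_m^2)^{(k)}$.
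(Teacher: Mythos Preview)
Your proof is correct and follows exactly the route the paper sets up: the paper states Lemma~\ref{0.6} without proof, relying on the discussion immediately after Theorem~\ref{0.7} (where $\mathcal{M}=\operatorname{ran} P$ with $P$ the strong limit of $\{P_n\}$, and $\mathbf{T}^{*}|_{\mathcal{M}^{\perp}}$ is unitarily equivalent to the $m$-shift on $H_m^2(E_0)$). Your argument that $P_n|_{\mathcal{M}}=I_{\mathcal{M}}$ is valid but slightly more than needed---since $P$ is by definition the strong limit of $\{P_n\}$, hypothesis~(3) gives $P=0$ and hence $\mathcal{M}=\{0\}$ immediately.
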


\begin{lemma}\label{6}
Let $\mathbf{T}=(T_{1},\ldots,T_{m})\in\mathbf{\mathcal{B}}_{n}(\Omega)$ satisfies the condition of Lemma \ref{0.6}, then
\begin{itemize}
  \item [(1)] $\mathbf{T}\sim_{u}\mathbf{M}_{z}^{*(l)}$ and $\mathbf{T}\in\mathbf{\mathcal{B}}_{l}(\mathbb{B}^{m})$, where $l=\dim \ker \mathbf{T}=\dim (\bigcap\limits_{i=1}^{m}\ker T_{i})$
  \item [(2)] $\mathbf{T}$ is a strongly irreducible if and only if $\mathbf{T}\in\mathbf{\mathcal{B}}_{1}(\mathbb{B}^{m})$.
\end{itemize}
\end{lemma}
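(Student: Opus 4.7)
The plan is to use Lemma \ref{0.6} to produce the unitary model, read off the Cowen-Douglas data from that model, and then handle strong irreducibility through the one-dimensional eigenbundle structure.

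For part (1), the hypotheses of Lemma \ref{6} are exactly those of Lemma \ref{0.6}, so I obtain directly that $\mathbf{T}\sim_{u}\mathbf{M}_{z}^{*(l)}$ with $l=\dim\bigcap_{i=1}^{m}\ker T_{i}$. The remaining claim $\mathbf{T}\in\mathbf{\mathcal{B}}_{l}(\mathbb{B}^{m})$ reduces, since the Cowen-Douglas class is manifestly invariant under unitary equivalence, to checking $\mathbf{M}_{z}^{*(l)}\in\mathbf{\mathcal{B}}_{l}(\mathbb{B}^{m})$. Here I use the Drury-Arveson kernel computation already carried out in the paper, which shows $\ker(\mathbf{M}_{z}^{*}-w)=\mathbb{C}K(\cdot,\overline{w})$ for every $w\in\mathbb{B}^{m}$: this gives $\dim\ker(\mathbf{M}_{z}^{*(l)}-w)=l$ uniformly in $w$, the span of reproducing kernels is dense in $H_{m}^{2}$ so condition (2) of the Cowen-Douglas definition holds after taking the $l$-fold amplification, and the closed-range condition follows from the fact that $\mathbf{M}_{z}^{*}-w$ has a bounded right inverse on the complement of its kernel for $w\in\mathbb{B}^{m}$.

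For part (2), the easy implication is $(\Rightarrow)$: if $\mathbf{T}$ is strongly irreducible and $\mathbf{T}\sim_{u}\mathbf{M}_{z}^{*(l)}$ by (1), then $\mathbf{M}_{z}^{*(l)}$ is strongly irreducible too; but for $l\geq 2$ the block projection onto the first copy of $H_{m}^{2}$ lies in the joint commutant of $\mathbf{M}_{z}^{*(l)}$ and is a nontrivial idempotent, forcing $l=1$. For $(\Leftarrow)$, I need to show $\mathbf{M}_{z}^{*}$ itself is strongly irreducible, since strong irreducibility is preserved by unitary equivalence. Let $P$ be an idempotent in the joint commutant; because $P$ preserves each eigenspace and $\ker(\mathbf{M}_{z}^{*}-w)=\mathbb{C}K(\cdot,\overline{w})$ is one-dimensional, we have $PK(\cdot,\overline{w})=\lambda(w)K(\cdot,\overline{w})$ with $\lambda(w)\in\{0,1\}$. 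The map $w\mapsto K(\cdot,\overline{w})$ is antiholomorphic and nonvanishing, so $\lambda$ is continuous on the connected set $\mathbb{B}^{m}$ and therefore constant; density of the span of kernels then yields $P=0$ or $P=I$.

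The step I expect to be the main obstacle is the $(\Leftarrow)$ direction of (2): all the other parts follow by directly quoting Lemma \ref{0.6} and the Drury-Arveson kernel calculation. In particular, one must be careful to extract that $\lambda$ is genuinely globally constant on $\mathbb{B}^{m}$, using connectedness together with the fact that the eigenvectors $K(\cdot,\overline{w})$ vary holomorphically, and then argue that the resulting scalar determines $P$ on a total set. This is the standard argument that index-one Cowen-Douglas tuples are strongly irreducible, adapted to the multivariable setting, but it is the only place where a genuine bundle-theoretic ingredient enters the proof.
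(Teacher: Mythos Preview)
Your proposal is correct and follows the same outline as the paper: invoke Lemma~\ref{0.6} for the unitary model, transfer the Cowen--Douglas membership from $\mathbf{M}_{z}^{*(l)}$, and read off strong irreducibility from the value of $l$. The paper's own proof of part~(2) is in fact a bare assertion (``$\mathbf{T}$ is strongly irreducible if and only if $\mathbf{T}\sim_{u}\mathbf{M}_{z}^{*}$''), whereas you supply the two missing ingredients---the block projection for $l\geq 2$ and the eigenbundle argument showing $\mathbf{M}_{z}^{*}$ is strongly irreducible---so your write-up is more complete than the paper's at this point.
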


\begin{proof}
By Lemma \ref{0.6}, we have that $\mathbf{T}\sim_{u}\mathbf{M}_{z}^{*(l)}$ where $l=\dim \ker \mathbf{T}=\dim (\bigcap\limits_{i=1}^{m}\ker T_{i})$. Since $\mathbf{M}_{z}^{*(l)}\in\mathbf{\mathcal{B}}_{l}(\mathbb{B}^{m})$ and $\mathbf{T}\sim_{u}\mathbf{M}_{z}^{*(l)}$, $\mathbf{T}\in\mathbf{\mathcal{B}}_{l}(\mathbb{B}^{m})$. At the same time, we obtain that $\mathbf{T}$ is strongly irreducible if and only if $\mathbf{T}\sim_{u}\mathbf{M}_{z}^{*}$, that is, $\mathbf{T}\in\mathbf{\mathcal{B}}_{1}(\mathbb{B}^{m})$.
\end{proof}

\begin{lemma}\label{0.8}
Let $P\in\mathcal{A}^{\prime}(\mathbf{M}_{z}^{*(n)})=\bigcap\limits_{i=1}^{m}\mathcal{A}^{\prime}(M_{z_{i}}^{*(n)})$ be an idempotent, $\mathcal{H}=H^{2}_{m}$, $\mathbf{T}=\mathbf{M}_{z}^{*(n)}|_{P\mathcal{H}^{(n)}}$ and $l=\dim \ker \mathbf{T}=\dim \left(\bigcap\limits_{i=1}^{m}\ker T_{i}\right)$. Then there is a unitary operator $U$ such that
\begin{itemize}
  \item [(1)] $U(P\mathcal{H}^{(n)})=\mathcal{H}^{(l)}\oplus0^{(n-l)}$, that is,
  $$UPU^{*}=\begin{pmatrix} I_{\mathcal{H}^{(l)}}\,\, & \star\\ 0 \,\,& 0\end{pmatrix}\begin{matrix} \mathcal{H}^{(l)}\\ \quad \mathcal{H}^{(n-l)} \end{matrix}.$$
  \item [(2)] Let $V=U|_{P\mathcal{H}^{(n)}}$, then $V\mathbf{T}V^{*}=\mathbf{M}_{z}^{*(l)}$, in other words, $\mathbf{T}$ is unitarily equivalent to $\mathbf{M}_{z}^{*(l)}$.
\end{itemize}
\end{lemma}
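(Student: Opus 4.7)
My plan is to produce the unitary $U$ by first establishing, via the structure theorem for the restriction $\mathbf{T}=\mathbf{M}_z^{*(n)}|_{P\mathcal{H}^{(n)}}$, a unitary intertwiner defined only on $P\mathcal{H}^{(n)}$, and then extending it to all of $\mathcal{H}^{(n)}$ by combining it with a suitable unitary between the orthogonal complements. Corollary \ref{cor1} guarantees that $\mathbf{T}$ satisfies all three hypotheses of Lemma \ref{0.6}, so Lemma \ref{0.6} supplies a unitary $V_0 : P\mathcal{H}^{(n)} \to \mathcal{H}^{(l)}$ with $V_0\mathbf{T}V_0^*=\mathbf{M}_z^{*(l)}$, where $l=\dim\ker\mathbf{T}$. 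Composing $V_0$ with the canonical isometric inclusion $J : \mathcal{H}^{(l)} \hookrightarrow \mathcal{H}^{(l)}\oplus 0^{(n-l)} \subset \mathcal{H}^{(n)}$ gives a unitary $V=JV_0$ from $P\mathcal{H}^{(n)}$ onto $\mathcal{H}^{(l)}\oplus 0^{(n-l)}$ that already realizes conclusion~(2).

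The main obstacle is extending $V$ to a unitary $U$ on all of $\mathcal{H}^{(n)}$ that sends $P\mathcal{H}^{(n)}$ onto $\mathcal{H}^{(l)}\oplus 0^{(n-l)}$; this amounts to producing a unitary $W : (P\mathcal{H}^{(n)})^\perp \to 0^{(l)}\oplus\mathcal{H}^{(n-l)}$, which in turn requires matching the Hilbert-space dimensions of these two complements. I would handle this as follows. Since $I-P$ is also an idempotent in $\mathcal{A}^{\prime}(\mathbf{M}_z^{*(n)})$, the same application of Corollary \ref{cor1} and Lemma \ref{0.6} to $I-P$ shows that $\mathbf{M}_z^{*(n)}|_{(I-P)\mathcal{H}^{(n)}}$ is unitarily equivalent to $\mathbf{M}_z^{*(n-l)}$---the key input being that $P$ preserves the $n$-dimensional joint kernel $\bigcap_{i=1}^{m}\ker M_{z_i}^{*(n)}$ and restricts there to an idempotent of rank $l$, so $I-P$ restricts to one of rank $n-l$. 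Meanwhile, the algebraic direct sum decomposition $\mathcal{H}^{(n)}=P\mathcal{H}^{(n)}\dot{+}(I-P)\mathcal{H}^{(n)}$ together with the open-mapping theorem identifies $(I-P)\mathcal{H}^{(n)}$ topologically with $(P\mathcal{H}^{(n)})^\perp$ via orthogonal projection, yielding $\dim(P\mathcal{H}^{(n)})^\perp=\dim(I-P)\mathcal{H}^{(n)}=\dim\mathcal{H}^{(n-l)}=\dim(0^{(l)}\oplus\mathcal{H}^{(n-l)})$. A desired $W$ therefore exists, and setting $U=V\oplus W$ delivers a unitary on $\mathcal{H}^{(n)}$ with $U(P\mathcal{H}^{(n)})=\mathcal{H}^{(l)}\oplus 0^{(n-l)}$.

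Conclusion~(2) is immediate from $V=U|_{P\mathcal{H}^{(n)}}=JV_0$. For conclusion~(1), the range of $UPU^*$ equals $U(\mathrm{ran}\,P)=\mathcal{H}^{(l)}\oplus 0^{(n-l)}$, and on that range $UPU^*$ acts as the identity: for $y=Ux$ with $x\in P\mathcal{H}^{(n)}$ one has $UPU^*y=UPx=Ux=y$. Expanding $UPU^*$ as a $2\times 2$ block matrix along $\mathcal{H}^{(n)}=\mathcal{H}^{(l)}\oplus\mathcal{H}^{(n-l)}$ therefore yields $I_{\mathcal{H}^{(l)}}$ in the top-left slot, an undetermined entry $\star$ in the top-right (encoding the action of $P$ on $(P\mathcal{H}^{(n)})^\perp$ after conjugation by $U$), and zeros along the bottom row---precisely the form claimed.
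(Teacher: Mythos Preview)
Your argument is correct and follows essentially the same route as the paper's proof: obtain the unitary $V:P\mathcal{H}^{(n)}\to\mathcal{H}^{(l)}$ from Corollary~\ref{cor1} together with Lemma~\ref{0.6}, then extend to a global unitary $U=V\oplus W$ by choosing any unitary $W$ from $(P\mathcal{H}^{(n)})^{\perp}$ onto $\mathcal{H}^{(n-l)}$. The only substantive difference is in how the existence of $W$ is justified. The paper simply asserts that $\mathcal{H}^{(n)}\ominus P\mathcal{H}^{(n)}$ is infinite dimensional when $l<n$; you instead apply the same structure theorem to the complementary idempotent $I-P$ (yielding $(I-P)\mathcal{H}^{(n)}\cong\mathcal{H}^{(n-l)}$) and then match dimensions via the bounded bijection between $(I-P)\mathcal{H}^{(n)}$ and $(P\mathcal{H}^{(n)})^{\perp}$. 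Your version is more explicit and in fact fills in what the paper leaves implicit, since infinite-dimensionality of the orthogonal complement does not follow from $l<n$ by cardinality alone. Your added verification of the $2\times 2$ block form of $UPU^{*}$ is also a welcome detail that the paper omits.
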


\begin{proof}\label{0.9}
By Lemma \ref{cor1} and Lemma \ref{6}, $T$ is unitarily equivalent to $\mathbf{M}_{z}^{*(l)}$, where $l=\dim \ker \mathbf{T}$. Thus there is a unitary operator
$$V : P\mathcal{H}^{(n)}\longrightarrow\mathcal{H}^{(l)}$$
such that
$$V\mathbf{T}V^{*}=M_{z}^{*(l)}.$$
Note that if $l<n$, $\mathcal{H}^{(n)}\ominus P\mathcal{H}^{(n)}$ is infinite dimensional. Therefore, there exists a unitary operator
$$W : \mathcal{H}^{(n)}\ominus P\mathcal{H}^{(n)}\longrightarrow\mathcal{H}^{(n-l)}.$$
Set $U:=V\oplus W$, then $U$  satisfies the lemma.
\end{proof}

For $\mathbf{T}=(T_{1},\ldots,T_{m})\in\mathbf{\mathcal{B}}_{n}(\Omega)$ and $z=(z_{1},\ldots,z_{m})\in\Omega$. Let $A\in\mathcal{A}^{\prime}(\mathbf{T})$, then
$$A(\mathbf{T}-zI)=(\mathbf{T}-zI)A$$ and
$$A\ker(\mathbf{T}-zI)\subset\ker(\mathbf{T}-zI).$$
Define
$$(\Gamma_{\mathbf{T}}A)(z):=A|_{\ker(\mathbf{T}-zI)}$$
where $A\in\mathcal{A}^{\prime}(\mathbf{T})$. Then $\Gamma_{\mathbf{T}}$ is an injective contraction.

We know that studying the commutant of operators is helpful to understand the structure of operators, In \cite{SW}, Shields and Wallen proved that the commutant of a contractive multiplication operator on a reproducing kernel Hilbert space of scalar-valued holomorphic functions on the open unit disc is the algebra of bounded holomorphic functions on the open unit disc. Chavan, Podder and Trivedi in \cite{CPT2018} pointed out that under some conditions, the commutant of the multiplication $m$-tuple $\mathbf{M}_{z}$ on a reproducing kernel Hilbert space of $E$-valued holomorphic functions on a bounded domain $\Omega$ is the algebra $H^{\infty}_{B(E)}(\Omega)$ of bounded holomorphic $B(E)$-valued functions on $\Omega$, where $E$ is a separable Hilbert space.
Inspired by these results, we give the commutant of adjoint $\mathbf{M}_{z}^{*}=(M_{z_{1}}^{*},\ldots,M_{z_{m}}^{*})$ of $m$-tuple of multiplication operators on Hilbert space $A_{k}^{2},k> m,$ with reproducing kernel $K_{k}(z,w)=\frac{1}{(1-\langle z,w\rangle)^{k}}$, where $z,w$ in $\mathbb{B}^{m}$.

\begin{lemma}
Let $m$-tuple $\mathbf{M}_{z}^{*}=(M_{z_{1}}^{*},\ldots,M_{z_{m}}^{*})$ be the adjoint of the $m$-tuple of multiplication operators on Hilbert space $A_{k}^{2}$ with $k> m$, then $\mathcal{A}^{\prime}(\mathbf{M}_{z}^{*})\cong H^{\infty}(\mathbb{B}^{m}).$
\end{lemma}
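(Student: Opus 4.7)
The plan is to identify $\mathcal{A}^{\prime}(\mathbf{M}_{z}^{\ast})$ with the multiplier algebra of $A_{k}^{2}$ and then show that this multiplier algebra coincides with $H^{\infty}(\mathbb{B}^{m})$ under the assumption $k>m$. First, I would reduce to the commutant of $\mathbf{M}_{z}$ itself: the involution $X\mapsto X^{\ast}$ gives a conjugate-linear algebra isomorphism between $\mathcal{A}^{\prime}(\mathbf{M}_{z}^{\ast})$ and $\mathcal{A}^{\prime}(\mathbf{M}_{z})$ (since $X M_{z_{i}}^{\ast}=M_{z_{i}}^{\ast}X$ is equivalent to $M_{z_{i}}X^{\ast}=X^{\ast}M_{z_{i}}$), so it suffices to prove $\mathcal{A}^{\prime}(\mathbf{M}_{z})\cong H^{\infty}(\mathbb{B}^{m})$.

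For the containment $H^{\infty}(\mathbb{B}^{m})\hookrightarrow\mathcal{A}^{\prime}(\mathbf{M}_{z})$, I would use the standard weighted Bergman realization: when $k>m$, $A_{k}^{2}$ is the space of holomorphic functions on $\mathbb{B}^{m}$ that are square-integrable against the finite measure $c_{k}(1-|z|^{2})^{k-m-1}\,dV$. Consequently, for any $\phi\in H^{\infty}(\mathbb{B}^{m})$ the multiplication operator $M_{\phi}$ is bounded on $A_{k}^{2}$ with $\Vert M_{\phi}\Vert\leq\Vert\phi\Vert_{\infty}$, and it clearly commutes with each $M_{z_{i}}$, giving a contractive, unital, injective algebra homomorphism into $\mathcal{A}^{\prime}(\mathbf{M}_{z})$. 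Conversely, given $X\in\mathcal{A}^{\prime}(\mathbf{M}_{z})$ I would set $\phi:=X(1)\in A_{k}^{2}$, which is automatically holomorphic on $\mathbb{B}^{m}$. For any polynomial $p$ the commutation relation yields $X(p)=X(p(\mathbf{M}_{z})1)=p(\mathbf{M}_{z})X(1)=p\phi=M_{\phi}p$. Polynomials are dense in $A_{k}^{2}$, and $A_{k}^{2}$-convergence forces locally uniform convergence via the reproducing kernel $K_{k}$, so passing to the limit yields $X=M_{\phi}$ on all of $A_{k}^{2}$. Finally, the standard eigenvector identity $M_{\phi}^{\ast}K_{k}(\cdot,w)=\overline{\phi(w)}K_{k}(\cdot,w)$ for $w\in\mathbb{B}^{m}$ gives $|\phi(w)|\leq\Vert M_{\phi}^{\ast}\Vert=\Vert X\Vert$ for every $w$, hence $\phi\in H^{\infty}(\mathbb{B}^{m})$ with $\Vert\phi\Vert_{\infty}\leq\Vert X\Vert$.

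The hard part will be the clean justification of the inclusion $H^{\infty}(\mathbb{B}^{m})\subseteq\operatorname{Mult}(A_{k}^{2})$ used in the second paragraph, which is exactly where the hypothesis $k>m$ enters in an essential way: for $k\leq m$ (most dramatically for the Drury--Arveson space at $k=1$) the multiplier algebra is known to be strictly smaller than $H^{\infty}$, so no purely reproducing-kernel argument can work. Once the weighted Bergman integral representation is invoked, the containment reduces to the pointwise estimate $|\phi f|^{2}\leq\Vert\phi\Vert_{\infty}^{2}|f|^{2}$ inside the integral; the remaining steps (density of polynomials, the kernel eigenvector relation, the adjoint reduction) are formal and do not use the specific value of $k$.
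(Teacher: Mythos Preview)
Your argument is correct and, in fact, handles the key step more carefully than the paper does. The two proofs diverge mainly at the inclusion $H^{\infty}(\mathbb{B}^{m})\subseteq\operatorname{Mult}(A_{k}^{2})$. The paper argues by a Taylor-coefficient estimate: it bounds $\Vert\psi\Vert_{A_{k}^{2}}^{2}$ by $\sum_{\alpha}|\widehat{\psi}(\alpha)|^{2}$ and then asserts $\Vert M_{\psi}^{\ast}\Vert=\Vert\psi\Vert_{A_{k}^{2}}$, which conflates the multiplier norm with the function norm. Your route via the weighted Bergman realization $A_{k}^{2}=L_{a}^{2}\bigl(\mathbb{B}^{m},c_{k}(1-|z|^{2})^{k-m-1}\,dV\bigr)$ is the standard one, makes the role of the hypothesis $k>m$ explicit (finiteness of the measure), and yields $\Vert M_{\phi}\Vert\leq\Vert\phi\Vert_{\infty}$ directly. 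For the converse direction the paper works on the kernel side, showing that any $X^{\ast}\in\mathcal{A}^{\prime}(\mathbf{M}_{z}^{\ast})$ acts on $K_{k}(\cdot,w)$ by a scalar $\overline{\phi(w)}$; your approach via $\phi=X(1)$ and polynomial density is equivalent and equally valid.

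One small point worth tightening: the map $X\mapsto X^{\ast}$ is conjugate-linear and \emph{anti}-multiplicative, so strictly speaking it is an anti-isomorphism $\mathcal{A}^{\prime}(\mathbf{M}_{z}^{\ast})\to\mathcal{A}^{\prime}(\mathbf{M}_{z})$. This is harmless here because the target $H^{\infty}(\mathbb{B}^{m})$ is commutative (so the anti-isomorphism is an isomorphism once you know $\mathcal{A}^{\prime}(\mathbf{M}_{z})\cong H^{\infty}$), but you should either note this or, as the paper does, define the map directly as $M_{\phi}^{\ast}\mapsto\phi$ on $\mathcal{A}^{\prime}(\mathbf{M}_{z}^{\ast})$.
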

\begin{proof}
Letting $X^{*}\in\mathcal{A}^{\prime}(\mathbf{M}_{z}^{*})$, then for any $w=(w_{1},\ldots,w_{1})\in\mathbb{B}^{m}$ and $c\in\mathbb{C}$, we have
$$(M_{z_{i}}^{*}-\overline{w}_{i})X^{*}K(\cdot,w)c=X^{*}(M_{z_{i}}^{*}-\overline{w}_{i})K(\cdot,w)c=0,\quad 1\leq i\leq m.$$
Thus $X^{*}$ maps $K(\cdot,w)\mathbb{C}$ into itself. Furthermore, there is a bounded linear function $\phi(w)$ on complex field $\mathbb{C}$ such that $X^{*}K(\cdot,w)c=K(\cdot,w)\phi(w)^{*}c.$
At this time, $$\langle(Xh)(w),c\rangle_{\mathbb{C}}=\langle h,X^{*}K(\cdot,w)c\rangle_{A_{k}^{2}}=\langle h,K(\cdot,w)\phi(w)^{*}c\rangle_{A_{k}^{2}}=\langle \phi(w)h(w),c\rangle_{\mathbb{C}}$$ for any $h\in A_{k}^{2}$.
Thus $X^{*}=M_{\phi}^{*}$. From $X^{*}\ker(M_{z_{i}}^{*}-\overline{w}_{i})\subset\ker(M_{z_{i}}^{*}-\overline{w}_{i})$, we know that $\phi$ is holomorphic.
Therefore, we can define the mapping
$$\Upsilon:\mathcal{A}^{\prime}(\mathbf{M}_{z}^{*})\longrightarrow H^{\infty}(\mathbb{B}^{m})$$ given by $\Upsilon(M_{\phi}^{*})=\phi.$
According to the above, the mapping $\Upsilon$ is a injective homomorphism.
The following shows that $\Upsilon$ is a surjection.
For any $\psi\in H^{\infty}(\mathbb{B}^{m})$, let its power series expansion be $\psi(z)=\sum\limits_{\alpha\in\mathbb{N}^{m}}\widehat{\psi}(\alpha)z^{\alpha}$, and $\{\mathbf{e}_{\alpha}(z)\}_{\alpha\in\mathbb{N}^{m}}$ be an orthonormal basis of space $A_{k}^{2}$.
From Theorem 41 in \cite{ZZ2008}, we can know that $\mathbf{e}_{\alpha}(z)=\sqrt{\frac{\Gamma(k+|\alpha|)}{\alpha!\Gamma(k)}}z^{\alpha}$, where $\Gamma$ is the gamma function. Hence
$$\Vert\psi(z)\Vert^{2}_{A_{k}^{2}}=
\sum\limits_{\alpha\in\mathbb{N}^{m}}|\widehat{\psi}(\alpha)|^{2}\langle z^{\alpha},z^{\alpha}\rangle_{A_{k}^{2}}=\sum\limits_{\alpha\in\mathbb{N}^{m}}|\widehat{\psi}(\alpha)|^{2}\frac{\alpha!\Gamma(k)}
{\Gamma(k+|\alpha|)}\leq\sum\limits_{\alpha\in\mathbb{N}^{m}}|\widehat{\psi}(\alpha)|^{2}<\infty.$$
This means that $\Vert M_{\psi}^{*}\Vert=\Vert\psi(z)\Vert_{A_{k}^{2}}<\infty$ and $M_{\psi}^{*}\in \mathcal{A}^{\prime}(\mathbf{M}_{z}^{*}).$
\end{proof}

\begin{corollary}\label{0.11}
Let $\mathbf{T}=\mathbf{M}_{z}^{*(n)}$, then $\Gamma_{\mathbf{T}}$ is an isometry isomorphism from $\mathcal{A}^{\prime}(\mathbf{T})$ onto $M_{n}(H^{\infty}(\mathbb{B}^{m})).$
\end{corollary}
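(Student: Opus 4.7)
The plan is to combine the preceding scalar commutant identification $\mathcal{A}'(\mathbf{M}_z^*)\cong H^\infty(\mathbb{B}^m)$ (via $M_\phi^*\mapsto\phi$) with the matricial identification $\mathcal{A}'(\mathbf{M}_z^{*(n)})=M_n(\mathcal{A}'(\mathbf{M}_z^*))$ from Lemma \ref{2-12}, and then to verify that the resulting abstract algebra isomorphism is implemented concretely by $\Gamma_\mathbf{T}$ through its restriction to joint eigenspaces.

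Combining the two lemmas, every $A\in\mathcal{A}'(\mathbf{T})$ can be written uniquely as $A=(M_{\phi_{ij}}^*)_{1\le i,j\le n}$ with $\phi_{ij}\in H^\infty(\mathbb{B}^m)$, so on the algebraic level $\mathcal{A}'(\mathbf{T})\cong M_n(H^\infty(\mathbb{B}^m))$. To see that $\Gamma_\mathbf{T}$ realizes this correspondence, I will make the eigenspace picture explicit: for $w\in\mathbb{B}^m$ the identity $\ker(\mathbf{M}_z^*-\overline{w}I)=\mathbb{C}\,K(\cdot,w)$ yields $\ker(\mathbf{T}-\overline{w}I)=K(\cdot,w)\otimes\mathbb{C}^n$, an $n$-dimensional fiber with canonical frame $\xi_j(w)=K(\cdot,w)\otimes e_j$. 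Using the standard reproducing-kernel computation $M_\phi^*K(\cdot,w)=\overline{\phi(w)}\,K(\cdot,w)$, one gets
\[
A\,\xi_j(w)=\sum_{i=1}^n\overline{\phi_{ij}(w)}\,\xi_i(w),
\]
so in the frame $\{\xi_j(w)\}$ the matrix of $(\Gamma_\mathbf{T}A)(\overline{w})$ is $(\overline{\phi_{ij}(w)})$. Fixing this trivialization of the bundle fibers identifies $\Gamma_\mathbf{T}A$ with the holomorphic matrix-valued function $z\mapsto(\overline{\phi_{ij}(\overline{z})})$, which belongs to $M_n(H^\infty(\mathbb{B}^m))$ with the same sup-norm as $(\phi_{ij})$. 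Since any $A,B\in\mathcal{A}'(\mathbf{T})$ preserve each joint eigenspace, $\Gamma_\mathbf{T}(AB)(w)=\Gamma_\mathbf{T}(A)(w)\Gamma_\mathbf{T}(B)(w)$, so $\Gamma_\mathbf{T}$ is an algebra homomorphism; surjectivity onto $M_n(H^\infty(\mathbb{B}^m))$ is then immediate from the first step, and injectivity is already recorded after the definition of $\Gamma_\mathbf{T}$ (it follows from $\bigvee_{w}\ker(\mathbf{T}-wI)$ being dense).

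To promote the contractive isomorphism to an isometry, I will invoke the scalar multiplier-norm identity $\|M_\phi\|_{\mathcal{L}(A_k^2)}=\|\phi\|_{H^\infty(\mathbb{B}^m)}$ for $k>m$, classical for weighted Bergman multipliers, together with its matricial ampliation $\|(M_{\phi_{ij}})\|_{\mathcal{L}((A_k^2)^{(n)})}=\sup_{w\in\mathbb{B}^m}\|(\phi_{ij}(w))\|_{M_n(\mathbb{C})}$. Taking adjoints preserves operator norms, so $\|A\|=\sup_w\|(\phi_{ij}(w))\|_{M_n}$, which matches $\|\Gamma_\mathbf{T}A\|_{M_n(H^\infty(\mathbb{B}^m))}$ exactly by the fiberwise formula above. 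The main obstacle I anticipate is this matricial multiplier-norm identity: the scalar case is standard on $A_k^2$ with $k>m$, but the upgrade to $M_n$-valued multipliers requires either a von Neumann-type inequality on the ball or a direct dilation argument, and is the one analytic ingredient beyond the algebraic identifications supplied by the two preceding lemmas.
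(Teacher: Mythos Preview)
The paper states this corollary without proof, evidently regarding it as an immediate consequence of the preceding scalar lemma $\mathcal{A}'(\mathbf{M}_z^*)\cong H^\infty(\mathbb{B}^m)$ together with Lemma~\ref{2-12}. Your proposal follows exactly this route and supplies the details the paper omits: the matricial decomposition $A=(M_{\phi_{ij}}^*)$, the explicit fiberwise computation of $\Gamma_\mathbf{T}$ via the reproducing kernel frame, and the verification of the homomorphism and bijectivity properties. On the algebraic side your argument is complete and matches the paper's intended reasoning.

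Where you actually go beyond the paper is in isolating the isometry claim. Note that the paper's scalar lemma only establishes an algebra isomorphism and the boundedness of $M_\psi$ for $\psi\in H^\infty$ (and even that via an estimate that is not quite right as written); it does not prove $\|M_\phi\|=\|\phi\|_\infty$. So the ``isometry'' in the corollary is not literally a consequence of what precedes it in the paper. Your identification of the scalar and matricial multiplier-norm identities on $A_k^2$, $k>m$, as the missing analytic input is exactly correct; these are indeed classical for weighted Bergman spaces on the ball (the multiplier algebra is $H^\infty$ with equality of norms, and the $M_n$-valued version follows by the same pointwise argument since evaluation at each $w$ is a completely contractive functional on the multiplier algebra). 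Your caution is well placed, but this ingredient is standard, so the proposal is sound.
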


\begin{thm}\label{0.12}
Let $\mathbf{A}=(A_{1},\ldots,A_{m})\in\mathbf{\mathcal{B}}_{1}(\mathbb{B}^{m})$ be a commutative $m$-tuple, $\mathbf{T}=\mathbf{A}^{(n)}$ and $P\in\mathcal{A}^{\prime}(\mathbf{T})$ be an idempotent, denote $\mathbf{T}_{1}=\mathbf{T}|_{P\mathcal{H}^{(n)}}$. If $\mathbf{T}_{1}\in\mathbf{\mathcal{B}}_{l}(\mathbb{B}^{m})$, then $\mathbf{T}_{1}\sim_{u}\mathbf{A}^{(l)}.$
\end{thm}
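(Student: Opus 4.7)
The plan is to construct an invertible element $X\in\mathcal{A}'(\mathbf{T})$ conjugating $P$ to the canonical idempotent $P_0:=\mathrm{diag}(I_l,0_{n-l})$ that projects onto the first $l$ summands of the splitting $\mathbf{A}^{(n)}=\mathbf{A}^{(l)}\oplus\mathbf{A}^{(n-l)}$; Lemma~\ref{2-3} will then yield $\mathbf{T}_1=\mathbf{T}|_{P\mathcal{H}^{(n)}}\sim_s\mathbf{T}|_{P_0\mathcal{H}^{(n)}}=\mathbf{A}^{(l)}$. First I would invoke Lemma~\ref{2-12} to identify $\mathcal{A}'(\mathbf{T})=M_n(\mathcal{A}'(\mathbf{A}))$, so $P$ becomes an $n\times n$ idempotent matrix with entries in $\mathcal{A}'(\mathbf{A})$. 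Since $\mathbf{A}\in\mathcal{B}_1(\mathbb{B}^m)$ has one-dimensional eigenspaces, the restriction map $\Gamma_{\mathbf{A}}\colon\mathcal{A}'(\mathbf{A})\to\mathcal{O}(\mathbb{B}^m)$ defined by $\Gamma_{\mathbf{A}}(Y)(w)=Y|_{\ker(\mathbf{A}-w)}$ is an injective algebra homomorphism into holomorphic functions, and applied entrywise it turns $P$ into a holomorphic matrix function $\mathbf{P}(w)$ satisfying $\mathbf{P}(w)^2=\mathbf{P}(w)$.

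Next I would compute the rank of $\mathbf{P}(w)$. Under the tensor decomposition $\ker(\mathbf{T}-w)=\ker(\mathbf{A}-w)\otimes\mathbb{C}^n$, the operator $P$ restricts to $1\otimes\mathbf{P}(w)$, so $\mathrm{rank}\,\mathbf{P}(w)=\dim\ker(\mathbf{T}_1-w)$. The hypothesis $\mathbf{T}_1\in\mathcal{B}_l(\mathbb{B}^m)$ then gives $\mathrm{rank}\,\mathbf{P}(w)=l$ for every $w\in\mathbb{B}^m$. Because $\mathbb{B}^m$ is contractible and Stein, a constant-rank holomorphic idempotent-valued function is globally holomorphically similar to the block form: the image sub-bundle $\mathrm{im}\,\mathbf{P}\subset\mathbb{B}^m\times\mathbb{C}^n$ is topologically trivial (by contractibility) hence holomorphically trivial (by Grauert), producing a holomorphic $V\colon\mathbb{B}^m\to GL_n(\mathbb{C})$ with $V(w)^{-1}\mathbf{P}(w)V(w)=P_0$. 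Lifting $V$ and $V^{-1}$ back through $\Gamma_{\mathbf{A}}$ would yield an invertible $X\in M_n(\mathcal{A}'(\mathbf{A}))=\mathcal{A}'(\mathbf{T})$ with $XPX^{-1}=P_0$, and Lemma~\ref{2-3} would then give the similarity conclusion.

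The main obstacle is twofold: the lifting step itself, and the strengthening from similarity to unitary equivalence. The lifting requires the image of $\Gamma_{\mathbf{A}}$ to be large enough to contain the entries of $V$, and those entries to correspond to bounded multipliers with bounded inverses; this is delicate when $\mathcal{A}'(\mathbf{A})$ is only a proper subalgebra of $H^\infty(\mathbb{B}^m)$, so in practice one restricts to the spaces (Drury--Arveson, weighted Bergman) where the commutant was identified concretely in the earlier sections of the paper. For the claimed unitary equivalence, Cowen--Douglas rigidity reduces the problem to showing that the Hermitian holomorphic eigenbundles $\mathcal{E}_{\mathbf{T}_1}$ and $\mathcal{E}_{\mathbf{A}}^{\oplus l}$ are isomorphic as Hermitian bundles; via the tensor structure $\mathcal{E}_{\mathbf{T}_1}=\mathcal{E}_{\mathbf{A}}\otimes W$ with $W=\mathrm{im}\,\mathbf{P}$, this further reduces to producing a holomorphic orthonormal frame for the sub-bundle $W\subset\mathbb{B}^m\times\mathbb{C}^n$, which is the hardest step and where the constancy of the rank of $\mathbf{P}(w)$ guaranteed by $\mathbf{T}_1\in\mathcal{B}_l(\mathbb{B}^m)$ must be exploited decisively.
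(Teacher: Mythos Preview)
Your reduction to producing a holomorphic orthonormal frame for the sub-bundle $\mathrm{im}\,\mathbf{P}\subset\mathbb{B}^m\times\mathbb{C}^n$ is exactly the right target, and you correctly flag it as the decisive step. The gap is that you have no mechanism to produce it: Grauert's theorem yields only a holomorphic $GL_n$-valued conjugator, hence only a holomorphic frame, and for a generic rank-$l$ holomorphic sub-bundle of a trivial Hermitian bundle a holomorphic \emph{orthonormal} frame need not exist at all (the induced metric can have nonzero curvature). So bundle theory alone cannot close the argument, and your Grauert-plus-lifting route, besides the lifting problem you note, would at best give similarity.

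The paper supplies the missing idea by exploiting the operator origin of $\mathbf{P}(w)$. It transfers $\mathbf{P}(w)$ to an honest bounded idempotent $Q\in\mathcal{A}'(\mathbf{M}_z^{*(n)})$ on the Drury--Arveson model via Corollary~\ref{0.11} (where $\Gamma_{\mathbf{M}_z^{*(n)}}$ is surjective), and then invokes Lemma~\ref{0.8}: there is a Hilbert-space unitary $U$ on $(H_m^2)^{(n)}$ with $U(Q\mathcal{H}^{(n)})=\mathcal{H}^{(l)}\oplus 0$ and $U|_{Q\mathcal{H}^{(n)}}$ intertwining $\mathbf{M}_z^{*(n)}|_{Q\mathcal{H}^{(n)}}$ with $\mathbf{M}_z^{*(l)}$. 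Applying $U^*$ to the standard holomorphic sections $e_i(z)$ of $\ker(\mathbf{M}_z^{*(l)}-z)$ lands in $\ker(\mathbf{M}_z^{*(n)}-z)$, and the resulting coefficient functions $\lambda_{ij}(z)$ in the expansion $U^*e_i(z)=\sum_j\lambda_{ij}(z)e_j(z)$ are holomorphic (because $U^*$ is fixed and $e_i(z)$ is holomorphic) and orthonormal in $\mathbb{C}^n$ (because $U$ is unitary). These $(\lambda_{i1},\dots,\lambda_{in})$, $1\le i\le l$, are precisely the holomorphic orthonormal frame of $\mathrm{im}\,\mathbf{P}(z)$ you were seeking; transporting them to the $\mathbf{A}$ side via $w_i(z)=\sum_j\lambda_{ij}(z)v_j(z)$ gives a holomorphic isometric bundle map $\ker(\mathbf{A}^{(l)}-z)\to\ker(\mathbf{T}_1-z)$, and the Rigidity Theorem finishes. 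In short, the orthonormal frame is not found inside $M_n(\mathcal{O}(\mathbb{B}^m))$ but is extracted from a single unitary operator on the model Hilbert space, made available by the Wold-type decomposition of Lemma~\ref{0.8}; this is the step your proposal does not reach.
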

\begin{proof}
Since $m$-tuples $\mathbf{A}$ and $\mathbf{M}_{z}^{*}$ are in $\mathbf{\mathcal{B}}_{1}(\mathbb{B}^{m})$, we can find the holomorphic frames $v(z)$ and $e(z)$ of $\ker(\mathbf{A}-zI)$ and $\ker(\mathbf{M}^{*}_{z}-zI)$ respectively. That is
$$(\mathbf{A}-zI)v(z)=0,\quad (\mathbf{M}^{*}_{z}-zI)e(z)=0$$
for $z=(z_{1},\ldots,z_{m})\in\mathbb{B}^{m}$. Set
$$v_{k}(z):=(\overbrace{ 0,\ldots,0}^{k-1},v(z),0,\ldots,0)^{T}$$
and
$$e_{k}(z)=(\overbrace{ 0,\ldots,0}^{k-1},e(z),0,\ldots,0)^{T},$$
where $k=1,2,\ldots,n$, $z\in\mathbb{B}^{m}$ and $T$ stand for matrix transpose.
Let
$$P(z)=(\Gamma_{\mathbf{T}}P)(z),\quad z\in\mathbb{B}^{m},$$
then $P(z)=(P_{ij}(z))_{n\times n}\in M_{n}(H^{\infty})$ is an idempotent. By Lemma \ref{0.11}, $P(z)\in\mathcal{A}^{\prime}(\mathbf{M}_{z}^{*(n)})$ is an idempotent. Set
$$Q:=P(z) \quad \text{and} \quad \mathbf{S}:=\mathbf{M}_{z}^{*(n)}|_{Q\mathcal{H}^{(n)}}.$$
From $\mathbf{T}_{1}\in\mathbf{\mathcal{B}}_{l}(\mathbb{B}^{m})$, $Q\ker \mathbf{M}_{z}^{*(n)}=\ker \mathbf{S}$ and $P\ker \mathbf{T}=\ker \mathbf{T}_{1},$
$$\dim\ker \mathbf{S}=\dim ran Q=\dim ran P(0)=\dim\ker \mathbf{T}_{1}=l.$$
By Lemma \ref{0.8}, there exists a unitary operator $U$ such that
$$U(Q\mathcal{H}^{(n)})=\mathcal{H}^{(l)}\oplus0^{(n-l)},$$
that is,
\begin{equation}\label{1}
UQU^{*}=\begin{pmatrix} I_{\mathcal{H}^{(l)}}\,\, & \star\\ 0 \,\,& 0\end{pmatrix}\begin{matrix} \mathcal{H}^{(l)}\\ \quad \mathcal{H}^{(n-l)} \end{matrix}.
\end{equation}
Let $V=U|_{Q\mathcal{H}^{(n)}}$, then
$V\mathbf{S}=\mathbf{M}_{z}^{*(l)}V.$
It follows that
$$V^{*}(\mathcal{H}^{(l)}\oplus0^{(n-l)})=U^{*}(\mathcal{H}^{(l)}\oplus0^{(n-l)})=Q\mathcal{H}^{(n)}$$
and
$$U^{*}e_{i}(z)\in\ker(\mathbf{S}-z)\subset\ker(\mathbf{M}_{z}^{*(n)}-z)=\bigcap\limits_{j=1}^{m}\ker (M_{z_{j}}^{*(n)}-z_{j}),\quad 1\leq i\leq l.$$
Because $\{e_{1}(z),e_{1}(z),\ldots,e_{n}(z)\}$ is a holomorphic frame of $\ker(\mathbf{M}_{z}^{*(n)}-z)$, we get
$$U^{*}e_{i}(z)=\lambda_{i1}(z)e_{1}(z)+\cdots+\lambda_{in}(z)e_{n}(z),\quad 1\leq i\leq l,$$
where $\lambda_{ij}(z)\in\mathbb{C}$. Note that
$$\langle e_{i}(z), e_{j}(z)\rangle=\delta_{ij}\langle e(z), e(z)\rangle,\quad1\leq i,j\leq n$$
and $U^{*}$ is a unitary operator.
Thus
\begin{equation}\label{2}
\lambda_{i1}(z)\overline{\lambda_{j1}(z)}+\cdots+\lambda_{in}(z)\overline{\lambda_{jn}(z)}=\delta_{ij},\quad 1\leq i,j\leq l, \,\,z\in \mathbb{B}^{m}.
\end{equation}
From (\ref{1}), we know that
$$UQU^{*} e_{i}(z)= UP(z)U^{*}e_{i}(z)=I_{\mathcal{H}^{(l)}}e_{i}(z)=e_{i}(z),\quad  1\leq i\leq l, \,\,z\in \mathbb{B}^{m}.$$
Therefore, $P(z)U^{*}e_{i}(z)=U^{*}e_{i}(z)$, that is,
\begin{equation}\label{3}
(P_{ij}(z))_{n\times n}(\lambda_{i1}(z),\lambda_{i1}(z),\ldots,\lambda_{i1}(z))^{\mathcal{T}}=(\lambda_{i1}(z),\lambda_{i1}(z),\ldots,\lambda_{i1}(z))^{\mathcal{T}},
\end{equation}
where $1\leq i\leq l$ and $ z\in \mathbb{B}^{m}.$ Letting
$$w_{i}(z):=\lambda_{i1}(z)v_{1}(z)+\cdots+\lambda_{in}(z)v_{n}(z),\quad 1\leq i\leq l.$$
From (\ref{2}), (\ref{3}) and
\begin{equation}\label{4}
\langle v_{i}(z), v_{j}(z)\rangle=\delta_{ij}\langle v(z), v(z)\rangle,\quad1\leq i,j\leq n,
\end{equation}
We know that for $1\leq i,j\leq l,$ there are
$\langle w_{i}(z), w_{j}(z)\rangle=\delta_{ij}\langle v(z), v(z)\rangle$
and
$P(z)w_{i}(z)=w_{i}(z).$
Since $P(z)\ker(\mathbf{T}-z)=\ker(\mathbf{T}_{1}-z)$ and $\mathbf{T}_{1}\in\mathbf{\mathcal{B}}_{l}(\mathbb{B}^{m})$,
we have $$w_{i}(z)\in\ker(\mathbf{T}_{1}-z),\quad 1\leq i\leq l$$ and $\{w_{1}(z),w_{2}(z),\ldots,w_{l}(z)\}$ forms a holomorphic frame of $\ker(\mathbf{T}_{1}-z)$ for each $z\in \mathbb{B}^{m}.$ For $z\in \mathbb{B}^{m},$ define
$$W(z): \ker(\mathbf{A}^{(l)}-z)\longrightarrow\ker(\mathbf{T}_{1}-z)$$
as follows
\begin{equation}\label{5}
W(z)v_{i}(z)=w_{i}(z),\quad 1\leq i\leq l.
\end{equation}
By (\ref{4}) and (\ref{5}),
$$\langle W(z) v_{i}(z),W(z) v_{j}(z)\rangle=\langle w_{i}(z), w_{j}(z)\rangle=\delta_{ij}\langle v(z), v(z)\rangle=\langle v_{i}(z), v_{j}(z)\rangle,\quad1\leq i,j\leq l.$$
Thus $W(z)$ is a holomorphic isometric bundle map, using the Rigidity Theorem, we have
$$\mathbf{T}_{1}\sim_{u}\mathbf{A}^{(l)}.$$
\end{proof}

\begin{thm}\label{024}
Let $\mathbf{T}=(T_{1},\ldots,T_{m})\in\mathbf{\mathcal{B}}_{1}(\mathbb{B}^{m})\cap\mathcal{L}(\mathcal{H})^{m}$. Then $\bigvee(\mathcal{A}^{\prime}(\mathbf{T}))\cong\mathbb{N}$ and $K_{0}(\mathcal{A}^{\prime}(\mathbf{T}))\cong\mathbb{Z}$.
\end{thm}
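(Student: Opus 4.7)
The plan is to reduce the theorem to Theorem \ref{2-1} applied with $k=1$ and $n_{1}=1$, taking $\mathbf{A}_{1}=\mathbf{T}$. Under this reduction, the two claims collapse to: (i) $\mathbf{T}$ is strongly irreducible, and (ii) for every $n\in\mathbb{N}$, $\mathbf{T}^{(n)}$ admits a unique finite strongly irreducible decomposition up to similarity. Once these are in hand, Theorem \ref{2-1} gives $\bigvee(\mathcal{A}^{\prime}(\mathbf{T}))\cong\mathbb{N}$, and passing to the Grothendieck group yields $K_{0}(\mathcal{A}^{\prime}(\mathbf{T}))\cong\mathbb{Z}$ at once.

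For (i), I would use that $\mathbf{T}\in\mathcal{B}_{1}^{m}(\mathbb{B}^{m})$ carries a rank-one holomorphic eigenvector bundle $\mathcal{E}_{\mathbf{T}}$. Any idempotent $P\in\mathcal{A}^{\prime}(\mathbf{T})$ determines a holomorphic idempotent bundle endomorphism $(\Gamma_{\mathbf{T}}P)(w)=P|_{\ker(\mathbf{T}-w)}$; since each fiber is one-dimensional, this endomorphism is either $0$ or $1$ pointwise, and by holomorphy plus connectedness of $\mathbb{B}^{m}$, it is constant. The injectivity of $\Gamma_{\mathbf{T}}$ (noted just before Theorem \ref{024}) then forces $P\in\{0,I\}$.

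For (ii), I would first identify $\mathbf{T}^{(n)}\in\mathcal{B}_{n}^{m}(\mathbb{B}^{m})$ and use Lemma \ref{2-12} to write $\mathcal{A}^{\prime}(\mathbf{T}^{(n)})=M_{n}(\mathcal{A}^{\prime}(\mathbf{T}))$. Given any idempotent $P\in\mathcal{A}^{\prime}(\mathbf{T}^{(n)})$, the restriction $\mathbf{T}^{(n)}|_{P\mathcal{H}^{(n)}}$ lies in $\mathcal{B}_{l}^{m}(\mathbb{B}^{m})$ where $l$ is the (constant) rank of $(\Gamma_{\mathbf{T}^{(n)}}P)(w)$ and $1\le l\le n$; Theorem \ref{0.12} then gives $\mathbf{T}^{(n)}|_{P\mathcal{H}^{(n)}}\sim_{u}\mathbf{T}^{(l)}$. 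The key consequence, which is the main conceptual step, is that a strongly irreducible summand of $\mathbf{T}^{(n)}$ forces $l=1$ and is therefore unitarily equivalent to $\mathbf{T}$ itself. Hence if $\{P_{i}\}_{i=1}^{s}$ is any unit finite strongly irreducible decomposition of $\mathbf{T}^{(n)}$, each $\mathbf{T}^{(n)}|_{P_{i}\mathcal{H}^{(n)}}\sim_{u}\mathbf{T}$, and summing eigenspace dimensions at any $w\in\mathbb{B}^{m}$ gives $s=n$. For uniqueness, given two such decompositions $\{P_{i}\}_{i=1}^{n}$ and $\{Q_{i}\}_{i=1}^{n}$, I would pick unitaries $X_{i}\in GL(\mathcal{L}(P_{i}\mathcal{H}^{(n)},Q_{i}\mathcal{H}^{(n)}))$ intertwining the pieces and assemble $X=X_{1}\dot{+}\cdots\dot{+}X_{n}$, which lies in $GL(\mathcal{A}^{\prime}(\mathbf{T}^{(n)}))$ by Lemma \ref{2-4} and satisfies $XP_{i}X^{-1}=Q_{i}$ (with $\Pi$ the identity permutation).

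The main obstacle is really compressed into the previous paragraph: showing that every strongly irreducible piece of $\mathbf{T}^{(n)}$ is unitarily equivalent to $\mathbf{T}$. That step rests entirely on Theorem \ref{0.12}, so the bulk of the work has already been done there. With (i) and (ii) established, Theorem \ref{2-1} immediately yields $\bigvee(\mathcal{A}^{\prime}(\mathbf{T}))\cong\mathbb{N}$ via the isomorphism sending $[I]$ to $1$, and $K_{0}(\mathcal{A}^{\prime}(\mathbf{T}))$, being the Grothendieck group of $\mathbb{N}$, equals $\mathbb{Z}$, completing the proof.
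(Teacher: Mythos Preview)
Your proposal is correct and follows essentially the same strategy as the paper: verify the hypotheses of Theorem~\ref{2-1} with $k=1$, $n_1=1$, $\mathbf{A}_1=\mathbf{T}$, the key ingredient being Theorem~\ref{0.12} to identify every strongly irreducible summand of $\mathbf{T}^{(n)}$ with $\mathbf{T}$ itself. One minor difference worth noting: for the strong irreducibility of $\mathbf{T}$, the paper simply cites Lemma~\ref{6}, whereas you give a direct rank-one bundle argument via the injectivity of $\Gamma_{\mathbf{T}}$; your route is cleaner here since Lemma~\ref{6} literally assumes the extra hypotheses of Lemma~\ref{0.6}, which a generic $\mathbf{T}\in\mathcal{B}_1^m(\mathbb{B}^m)$ need not satisfy. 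You also spell out the uniqueness step (counting pieces and assembling the intertwiner via Lemma~\ref{2-4}) more explicitly than the paper does. The only point you glide over is checking that $\mathbf{T}^{(n)}|_{P\mathcal{H}^{(n)}}\in\mathcal{B}_l^m(\mathbb{B}^m)$, which Theorem~\ref{0.12} takes as a hypothesis; this is routine (the idempotent $P$ splits each joint eigenspace, the rank of the holomorphic idempotent $(\Gamma_{\mathbf{T}^{(n)}}P)(w)$ is constant on the connected ball, and closed range and spanning are inherited from $\mathbf{T}^{(n)}$), but deserves a sentence.
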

\begin{proof}
From Lemma \ref{6}, we know that $\mathbf{T}=(T_{1},\ldots,T_{m})$ is strongly irreducible. For every natural number $n$ and idempotent $P\in\bigvee(\mathcal{A}^{\prime}(\mathbf{T}^{(n)}))$,
if $\mathbf{A}_{1}=\mathbf{T}|_{P\mathcal{H}^{(n)}}=(T_{1}|_{P\mathcal{H}^{(n)}},\ldots,T_{m}|_{P\mathcal{H}^{(n)}})\in\mathbf{\mathcal{B}}_{1}(\mathbb{B}^{m})$,
then $\mathbf{A}_{1}\sim_{u}\mathbf{T}$ is obtained from Theorem \ref{0.12}, and $\mathbf{A}_{1}$ is also strongly irreducible. Therefore, from Theorem \ref{2-1}, $\bigvee(\mathcal{A}^{\prime}(\mathbf{T}))\cong\mathbb{N}$. Note that $K_{0}(\mathbb{N})=\mathbb{Z}$, then $K_{0}(\mathcal{A}^{\prime}(\mathbf{T}))\cong\mathbb{Z}$.
\end{proof}

\begin{corollary}\label{025}
Let $\mathbf{T}=(T_{1},\ldots,T_{m}),\widetilde{\mathbf{T}}=(\widetilde{T}_{1},\ldots,\widetilde{T}_{m})\in\mathbf{\mathcal{B}}_{1}(\mathbb{B}^{m})\cap\mathcal{L}(\mathcal{H})^{m}$. Then
$\mathbf{T}\sim_{s}\widetilde{\mathbf{T}}$ if and only if $K_{0}(\mathcal{A}^{\prime}(\mathbf{T}\oplus\widetilde{\mathbf{T}}))\cong\mathbb{Z}$.
\end{corollary}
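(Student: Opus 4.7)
The plan is to deduce the corollary from the general framework of Corollary \ref{2-13} applied to the tuple $\mathbf{T}\oplus\widetilde{\mathbf{T}}$. Since both tuples lie in $\mathbf{\mathcal{B}}_{1}^{m}(\mathbb{B}^{m})$, Lemma \ref{6}(2) guarantees that each of $\mathbf{T}$ and $\widetilde{\mathbf{T}}$ is strongly irreducible, which verifies the first hypothesis of Corollary \ref{2-13}. The substantive step is the second hypothesis: that $(\mathbf{T}\oplus\widetilde{\mathbf{T}})^{(n)}$ admits a unique finite strongly irreducible decomposition up to similarity for every $n\in\mathbb{N}$.

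For this, I would first observe that $\mathbf{T}\oplus\widetilde{\mathbf{T}}\in\mathbf{\mathcal{B}}_{2}^{m}(\mathbb{B}^{m})$, so $(\mathbf{T}\oplus\widetilde{\mathbf{T}})^{(n)}\in\mathbf{\mathcal{B}}_{2n}^{m}(\mathbb{B}^{m})$. Then for any idempotent $P\in\mathcal{A}^{\prime}((\mathbf{T}\oplus\widetilde{\mathbf{T}})^{(n)})$ for which the compression is strongly irreducible, I would argue — by extending Corollary \ref{cor1} and Lemma \ref{6}(2) to the direct sum situation — that $(\mathbf{T}\oplus\widetilde{\mathbf{T}})^{(n)}|_{P\mathcal{H}^{(2n)}}$ is itself a Cowen-Douglas tuple of index one. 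A frame-theoretic argument modeled on the proof of Theorem \ref{0.12}, combined with the Rigidity Theorem, should then identify the compression with either $\mathbf{T}$ or $\widetilde{\mathbf{T}}$ up to unitary (and hence similar) equivalence. Once every strongly irreducible direct summand has been placed in one of two fixed similarity classes, uniqueness up to similarity follows by counting multiplicities using Lemmas \ref{2-4} and \ref{2-7}.

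With both hypotheses of Corollary \ref{2-13}(2) in hand, the iff statement follows directly. As a cross-check for the forward implication, note that $\mathbf{T}\sim_{s}\widetilde{\mathbf{T}}$ gives $\mathbf{T}\oplus\widetilde{\mathbf{T}}\sim_{s}\mathbf{T}^{(2)}$, so by Lemmas \ref{2-2} and \ref{2-12} one has $\mathcal{A}^{\prime}(\mathbf{T}\oplus\widetilde{\mathbf{T}})\cong M_{2}(\mathcal{A}^{\prime}(\mathbf{T}))$, and Theorem \ref{024} together with stability of $K_{0}$ under matrix amplification yields $K_{0}(\mathcal{A}^{\prime}(\mathbf{T}\oplus\widetilde{\mathbf{T}}))\cong K_{0}(\mathcal{A}^{\prime}(\mathbf{T}))\cong\mathbb{Z}$.

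The main obstacle is the extension of Theorem \ref{0.12} to the mixed direct sum $(\mathbf{T}\oplus\widetilde{\mathbf{T}})^{(n)}$: one must control idempotents that need not respect the obvious block decomposition into $\mathbf{T}^{(n)}\oplus\widetilde{\mathbf{T}}^{(n)}$, and then argue that the resulting holomorphic frames on a strongly irreducible piece produce an isometric bundle map landing inside $\mathcal{E}_{\mathbf{T}}$ or $\mathcal{E}_{\widetilde{\mathbf{T}}}$, ruling out a new Cowen-Douglas class appearing as a sub-piece. This is exactly where the hypothesis that both tuples are of index one, rather than merely strongly irreducible, is used.
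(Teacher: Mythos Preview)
Your approach is correct and aligns with the paper's, which simply records that the corollary ``can be directly drawn from Theorem~\ref{2-1}'' without further detail. You have essentially spelled out the verification the paper leaves implicit---namely, that the uniqueness-of-decomposition hypothesis in Corollary~\ref{2-13}(2) (equivalently, condition~(1) of Theorem~\ref{2-1}) holds for $(\mathbf{T}\oplus\widetilde{\mathbf{T}})^{(n)}$ by the same mechanism (Lemma~\ref{6}, Theorem~\ref{0.12}, rigidity) used in the proof of Theorem~\ref{024}.
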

\begin{proof}
This conclusion can be directly drawn from Theorem \ref{2-1}.
\end{proof}

\begin{corollary}
Let $\mathbf{T}\in\mathbf{\mathcal{B}}_{1}(\mathbb{B}^{m})\cap\mathcal{L}(\mathcal{H})^{m}$. Then
$\bigvee\left(\text{Mult}(\mathcal{H})\right)\cong \mathbb{N}$ and $K_{0}(\text{Mult}(\mathcal{H}))=\mathbb{Z}$.
\end{corollary}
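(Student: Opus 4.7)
The plan is to reduce the corollary to Theorem~\ref{024} by identifying $\text{Mult}(\mathcal{H})$ (up to an order-reversing isomorphism that preserves all relevant K-theoretic data) with the commutant algebra $\mathcal{A}^{\prime}(\mathbf{T})$.

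First, since $\mathbf{T}\in\mathbf{\mathcal{B}}_{1}(\mathbb{B}^{m})$, I would invoke the Cowen--Douglas realization recalled in Section~1 to write $\mathbf{T}\sim_{u}\mathbf{M}_{z}^{*}$ acting on a reproducing kernel Hilbert space $\mathcal{H}_{K}$ of holomorphic functions on $\mathbb{B}^{m}$, so that $\mathcal{A}^{\prime}(\mathbf{T})\cong\mathcal{A}^{\prime}(\mathbf{M}_{z}^{*})$. Then, applying the identification of the commutant of the multiplication tuple recalled in the excerpt (Shields--Wallen~\cite{SW}, Chavan--Podder--Trivedi~\cite{CPT2018}, and in the same spirit as the preceding lemma that identifies $\mathcal{A}^{\prime}(\mathbf{M}_{z}^{*})$ with $H^{\infty}(\mathbb{B}^{m})$ on the weighted Bergman spaces $A_{k}^{2}$), the adjoint map $X\mapsto X^{*}$ carries $\mathcal{A}^{\prime}(\mathbf{M}_{z}^{*})$ onto $\mathcal{A}^{\prime}(\mathbf{M}_{z})=\text{Mult}(\mathcal{H}_{K})$.

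Next I would verify that although $X\mapsto X^{*}$ is an anti-isomorphism of Banach algebras (it reverses multiplication), it nevertheless induces a genuine semigroup isomorphism $\bigvee(\mathcal{A}^{\prime}(\mathbf{M}_{z}^{*}))\cong\bigvee(\text{Mult}(\mathcal{H}_{K}))$ and hence a group isomorphism on $K_{0}$. The checks are elementary: an idempotent $p$ transports to the idempotent $p^{*}$; an algebraic equivalence $xy=p,\ yx=q$ becomes $y^{*}x^{*}=p^{*},\ x^{*}y^{*}=q^{*}$, so $p^{*}\sim q^{*}$; a similarity $zpz^{-1}=q$ gives $(z^{*})^{-1}p^{*}z^{*}=q^{*}$, preserving $\sim_{s}$. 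Since the adjoint is compatible with passage from $M_{n}$ to $M_{\infty}$ and with direct sums of idempotents, the induced map descends to $\bigvee$ and, upon Grothendieck completion, to $K_{0}$.

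Finally, Theorem~\ref{024} supplies $\bigvee(\mathcal{A}^{\prime}(\mathbf{T}))\cong\mathbb{N}$ and $K_{0}(\mathcal{A}^{\prime}(\mathbf{T}))\cong\mathbb{Z}$; transporting these along the isomorphisms built above yields $\bigvee(\text{Mult}(\mathcal{H}))\cong\mathbb{N}$ and $K_{0}(\text{Mult}(\mathcal{H}))\cong\mathbb{Z}$. The main obstacle I expect is pinning down the identification $\mathcal{A}^{\prime}(\mathbf{M}_{z})=\text{Mult}(\mathcal{H}_{K})$ for the specific reproducing kernel Hilbert space delivered by the Cowen--Douglas realization of $\mathbf{T}$: the general theorem from \cite{CPT2018} applies under modest hypotheses on $K$, and one must check that the kernel produced here satisfies them. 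Once this identification is in place, the rest is formal bookkeeping about how the involution $*$ interacts with idempotents and with the $K_{0}$ functor.
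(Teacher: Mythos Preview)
Your proposal is correct and matches the paper's intended (implicit) argument: the corollary is stated without proof immediately after Theorem~\ref{024}, the identification $\text{Mult}(\mathcal{H})\cong\mathcal{A}'(\mathbf{T})$ being understood. Two minor simplifications are available in this rank-one setting: since $\mathbf{T}\in\mathcal{B}_1(\mathbb{B}^m)$ the model space is scalar-valued, so $\text{Mult}(\mathcal{H})$ is commutative and the adjoint $X\mapsto X^*$ is a genuine (conjugate-linear) algebra isomorphism rather than merely an anti-isomorphism, making your careful bookkeeping about $*$ and idempotent equivalence unnecessary; and the equality $\mathcal{A}'(\mathbf{M}_z)=\text{Mult}(\mathcal{H})$ you flag as a potential obstacle follows at once from $\dim\ker(\mathbf{M}_z^*-\bar w)=1$, by the same eigenvector argument the paper uses in the lemma for $A_k^2$.
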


\begin{corollary}
$\bigvee\left(H^{\infty}(\mathbb{B}_{m})\right)\cong \mathbb{N},~ K_{0}(H^{\infty}(\mathbb{B}_{m}))=\mathbb{Z}$.
\end{corollary}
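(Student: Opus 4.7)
The plan is to specialize Theorem~\ref{024} to a concrete Cowen--Douglas tuple of index one whose commutant is known to be isomorphic to $H^{\infty}(\mathbb{B}_{m})$. The natural candidate is the backward shift tuple $\mathbf{M}_{z}^{*}=(M_{z_{1}}^{*},\ldots,M_{z_{m}}^{*})$ acting on the weighted Bergman space $A_{k}^{2}$ with reproducing kernel $K_{k}(z,w)=(1-\langle z,w\rangle)^{-k}$ for some fixed integer $k>m$, because the preceding lemma already identifies $\mathcal{A}^{\prime}(\mathbf{M}_{z}^{*})\cong H^{\infty}(\mathbb{B}^{m})$ in this setting.

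First I would verify that $\mathbf{M}_{z}^{*}\in\mathbf{\mathcal{B}}_{1}(\mathbb{B}^{m})$. This is the standard Cowen--Douglas model: for every $w\in\mathbb{B}^{m}$ the reproducing kernel vector $K_{k}(\cdot,w)$ is a joint eigenvector for $\mathbf{M}_{z}^{*}$ with eigenvalue $\overline{w}$, $\dim\ker(\mathbf{M}_{z}^{*}-\overline{w})=1$, the joint eigenvectors span $A_{k}^{2}$, and the tuple has closed range on $\mathbb{B}^{m}$ by the standard reproducing-kernel computation. Thus $\mathbf{M}_{z}^{*}$ satisfies the hypotheses of Theorem~\ref{024}.

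Next, invoking Theorem~\ref{024} directly gives
\[
\bigvee\!\bigl(\mathcal{A}^{\prime}(\mathbf{M}_{z}^{*})\bigr)\cong\mathbb{N}\quad\text{and}\quad K_{0}\bigl(\mathcal{A}^{\prime}(\mathbf{M}_{z}^{*})\bigr)\cong\mathbb{Z}.
\]
Combined with the isomorphism $\mathcal{A}^{\prime}(\mathbf{M}_{z}^{*})\cong H^{\infty}(\mathbb{B}^{m})$ established in the previous lemma, and with the fact that $\bigvee(\cdot)$ and $K_{0}(\cdot)$ are invariants of unital Banach-algebra isomorphism, this immediately yields $\bigvee(H^{\infty}(\mathbb{B}_{m}))\cong\mathbb{N}$ and $K_{0}(H^{\infty}(\mathbb{B}_{m}))\cong\mathbb{Z}$.

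The only real work, and thus the main potential obstacle, is confirming the Cowen--Douglas membership $\mathbf{M}_{z}^{*}\in\mathbf{\mathcal{B}}_{1}(\mathbb{B}^{m})$ for $A_{k}^{2}$ with $k>m$; everything else is a transfer of structure through the algebra isomorphism. Since both $\bigvee$ and $K_{0}$ are functorial under unital algebra isomorphisms, no further compatibility need be checked, and the corollary follows as an immediate consequence of Theorem~\ref{024} and the preceding lemma identifying the commutant.
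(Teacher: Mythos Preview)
Your proposal is correct and matches the paper's intended argument. The paper states this corollary without proof, leaving it as an immediate consequence of Theorem~\ref{024} together with an identification of $H^{\infty}(\mathbb{B}^{m})$ as the commutant of a concrete tuple in $\mathcal{B}_{1}(\mathbb{B}^{m})$; your choice of $\mathbf{M}_{z}^{*}$ on $A_{k}^{2}$ with $k>m$ and the earlier lemma giving $\mathcal{A}'(\mathbf{M}_{z}^{*})\cong H^{\infty}(\mathbb{B}^{m})$ is exactly one of the two natural specializations (the other being Corollary~\ref{0.11} with $n=1$ on the Drury--Arveson space), and either suffices.
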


\end{document}